  \newenvironment{proof}{\vspace{1ex}\noindent{\bf Proof.}}{\hspace*{\fill}$\blacksquare$\vspace{1ex}}
  \newenvironment{proofof}[1]{\vspace{1ex}\noindent{\bf Proof of #1.}}{\hspace*{\fill}$\blacksquare$\vspace{1ex}}
  \newtheorem{theorem}{Theorem} 
  \newtheorem{lemma} [theorem] {Lemma}
  \newtheorem{corollary} [theorem] {Corollary}
  \newtheorem{proposition} [theorem] {Proposition}
  \newtheorem{conjecture} [theorem] {Conjecture}
\newcommand{\Fcal}[0]{\ensuremath{{\mathcal F}}}
\newcommand{\Lcal}[0]{\ensuremath{{\mathcal L}}}
\newcommand{\Qcal}[0]{\ensuremath{{\mathcal Q}}}
\newcommand{\Vcal}[0]{\ensuremath{{\mathcal V}}}
\newcommand{\Xcal}[0]{\ensuremath{{\mathcal X}}}
\newcommand{\Ycal}[0]{\ensuremath{{\mathcal Y}}}
\newcommand{\Zcal}[0]{\ensuremath{{\mathcal Z}}}
\newcommand{\eR}[0]{\ensuremath{ \mathbb R}}
\newcommand{\eN}[0]{\ensuremath{ \mathbb N}}
\newcommand{\Zed}[0]{\ensuremath{ \mathbb Z}}
\newcommand{\norm}[1]{\ensuremath{\|#1\|}}
\newcommand{\Pee}[0]{\ensuremath{{\mathbb P}}}
\newcommand{\Ee}[0]{\ensuremath{{\mathbb E}}}
\newcommand{\isd}[0]{\hspace{.2ex} \raisebox{-.1ex}{$=$} \hspace{-1.5ex} 
\raisebox{1ex}{{$\scriptstyle d$}} \hspace{.8ex} }
 \newcommand{\eps}{\varepsilon}
\newcommand{\orig}{\underline{0}}
\DeclareMathOperator{\dist}{dist}
\DeclareMathOperator{\conv}{conv}
\DeclareMathOperator{\vol}{vol}
\DeclareMathOperator{\diam}{diam}
\DeclareMathOperator{\clo}{cl}
\DeclareMathOperator{\inter}{int}
\DeclareMathOperator{\Po}{Po}
\DeclareMathOperator{\dd}{d}
\DeclareMathOperator{\Var}{Var}
\DeclareMathOperator{\inr}{inr}
\DeclareMathOperator{\outr}{outr}
\DeclareMathOperator{\meanw}{meanw}
\DeclareMathOperator{\width}{width}
\definecolor{orange}{RGB}{255,127,0}
\definecolor{pink}{RGB}{255,150,150}
\newcommand{\Vtyp}[0]{\Vcal_{\text{typ}}}
\begin{document}

\title{On the shape of the typical Poisson-Voronoi cell \\ in high dimensions}

\author{%
Matthias Irlbeck\thanks{Bernoulli Institute, Groningen University, The Netherlands. E-mail: {\tt m.irlbeck@rug.nl}.}
\and
Zakhar Kabluchko\thanks{Fachbereich Mathematik und Informatik, Universit\"{a}t M\"{u}nster, Einsteinstrasse 62, 
48149 M\"{u}nster, Germany. E-mail: {\tt zakhar.kabluchko@ni-muenster.de}. ZK has been supported by the 
German Research Foundation under Germany's Excellence Strategy  EXC 2044 -- 390685587, \textit{Mathematics 
M\"{u}nster: Dynamics - Geometry - Structure} and by the DFG priority 
program SPP 2265 \textit{Random Geometric Systems}.}
\and
Tobias M\"uller\thanks{Bernoulli Institute, Groningen University, The Netherlands. E-mail: {\tt tobias.muller@rug.nl}.} 
}

\date{\today}

\maketitle

\begin{abstract} 
We study the typical cell of the Poisson-Voronoi tessellation.
We show that when divided by the $d$-th root of the intensity parameter $\lambda$ of the Poisson process times
the volume of the unit ball, the inradius, outradius, diameter and mean width of the typical cell converge in 
probability to the constants $1/2, 1, 2, 2$ respectively, as the dimension $d\to\infty$.
We also show that the width of the typical cell, when rescaled in the same way, is bounded between 
$2\sqrt{5}/(2+\sqrt{5})-o_d(1)$ and $3/2+o_d(1)$, with probability $1-o_d(1)$. 
These results in particular imply that, with probability $1-o_d(1)$, the Hausdorff distance between the 
typical cell and any ball is at least of the order of the diameter of the typical cell.

In addition, we show that for all $k$ with $d-k\to\infty$, with probability $1-o_d(1)$, all faces of dimension $k$ 
have a diameter that is of a much smaller 
order than the diameter, inradius, etc.,~of the full typical cell. 
The same is true for ``almost all'' faces of 
dimension $d-k$ with $k$ fixed. And, we show that the number of such faces is 
$\left( (k+1)^{(k+1)/2} / k^{k/2} \pm o_d(1) \right)^d$ with probability $1-o_d(1)$.
\end{abstract}

\section{Introduction and statement of results}

Throughout this paper, $\Zcal$ will denote a Poisson point process on $\eR^d$ of constant intensity $\lambda>0$.
The {\em Voronoi cell} of a point $z \in \Zcal$ is defined as 

$$ \Vcal(z) := \{ x \in \eR^d : \norm{x-z} \leq \norm{x-z'} \text{ for all $z'\in \Zcal$ }\}. $$

The Voronoi cells $\Vcal(z) : z \in \Zcal$ constitute a dissection of $\eR^d$, called the {\em Poisson-Voronoi} 
tessellation. A standard fact states that, almost surely, every Voronoi cell $\Vcal(z)$ is a convex polytope
with $z$ in its interior. (For a proof, see e.g.~\cite{SchneiderWeil}, Lemma 10.1.1 and the discussion 
immediately preceding Theorem 10.2.1.)
The Poisson-Voronoi tessellation is one of the central models
in stochastic geometry. It is studied in connection with many different applications and has 
a long history going back at least to the work of Meijering~\cite{Meijering} in the early fifties. 
For an overview, see the monographs~\cite{SchneiderWeil,StoyanKendallMecke87} and the references therein.

We will be considering the {\em typical cell}, which is the Voronoi 
cell of the  origin $\orig$ in the Voronoi tessellation for $\Zcal \cup \{\orig\}$, the Poisson point process with the origin 
added in. Throughout the paper we will denote the typical cell by $\Vtyp$.
Again, $\Vtyp$ is almost surely a polytope with the origin in its interior (see again~\cite{SchneiderWeil} for the proof).

The significance of $\Vtyp$ is that, as the name ``typical cell'' suggests, it describes the 
average behaviour of the cells of the Poisson-Voronoi tessellation.
For instance, if we take the fraction of all Poisson points $z \in \Zcal$ inside the ball $B(\orig,r)$ around the 
origin of radius $r$
for which $\Vcal(z)$ has precisely $k$ vertices then, as $r\to\infty$, this fraction converges
almost surely to $\Pee( \Vtyp \text{ has $k$ vertices} )$.
(Here and in the rest of the paper $B(x,r)$ denotes the $d$-dimensional open ball with center $x$ and radius $r$.)
More generally, for $h$ a translation invariant and appropriately measurable function from the space of 
polytopes
into $\eR_+$, we have

\begin{equation}\label{eq:ergodic} 
\lim_{r\to\infty} \frac{\sum_{z\in\Zcal\cap B(\orig,r)} h\left( \Vcal(z)\right)}{\left|\Zcal \cap B(\orig,r)\right|}
= \Ee h( \Vtyp ) \quad \text{ a.s. } 
\end{equation}

\noindent
(See e.g.~\cite{Calkanewpers} and the references therein.)

The Poisson-Voronoi tessellation is a classical subject in stochastic geometry and its typical cell has been studied 
quite extensively. The behaviour of the typical cell as the dimension grows is however still relatively unexplored.
Here we will study the behaviour of some parameters of $\Vtyp$ related to its shape and size, as the dimension grows.
Namely, we consider the inradius, outradius, diameter, width 
and mean width, denoted by $\inr(.)$, $\outr(.)$, $\diam(.)$, $\width(.)$, $\meanw(.)$, respectively.
(Detailed definitions can be found in Section~\ref{sec:prelim} below.)
We denote by $\vol(.)$ the $d$-dimensional volume (Lebesgue measure) and by $B := B(\orig,1)$ the 
$d$-dimensional unit ball.

\begin{theorem}\label{thm:main}
For any $\lambda = \lambda(d)>0$ we have 

$$ \frac{\outr(\Vtyp)}{\sqrt[d]{\lambda\cdot\vol(B)}} \xrightarrow[d\to\infty]{\Pee} 1,
\hspace{5ex} \frac{\inr(\Vtyp)}{\sqrt[d]{\lambda\cdot\vol(B)}} \xrightarrow[d\to\infty]{\Pee} \frac12, $$

$$ \frac{\diam(\Vtyp)}{\sqrt[d]{\lambda\cdot\vol(B)}}  \xrightarrow[d\to\infty]{\Pee} 2, \hspace{5ex} 
\frac{\meanw(\Vtyp)}{\sqrt[d]{\lambda\cdot\vol(B)}} \xrightarrow[d\to\infty]{\Pee} 2. $$

\end{theorem}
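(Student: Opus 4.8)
The plan is to reduce every claim to a void-probability estimate for $\Zcal$. First I would invoke the scaling identity $\Vtyp(\mu)\isd(\mu\,\vol(B))^{-1/d}\,\Vtyp\big(1/\vol(B)\big)$, which holds because Voronoi diagrams commute with dilations and a dilation by $c$ turns a Poisson process of intensity $\lambda$ into one of intensity $\lambda/c^d$; since $\inr,\outr,\diam,\meanw$ are positively homogeneous of degree $1$, this lets me assume $\lambda\,\vol(B)=1$ and prove instead that $\inr(\Vtyp)\to\tfrac12$, $\outr(\Vtyp)\to1$, $\diam(\Vtyp)\to2$ and $\meanw(\Vtyp)\to2$ in probability. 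The one identity doing all the work is, for fixed $x\in\eR^d$,
$$\Pee(x\in\Vtyp)=\Pee\big(\Zcal\cap B(x,\|x\|)=\emptyset\big)=e^{-\lambda\vol(B)\|x\|^d}=e^{-\|x\|^d},$$
since $x$ lies in $\Vtyp$ precisely when no point of $\Zcal$ is closer to $x$ than $\orig$ is; note that $\|x\|^d\to0$ for $\|x\|<1$ and $\|x\|^d\to\infty$ for $\|x\|>1$.

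\emph{Lower bounds.} For $\inr$ I would use that $\orig\in\Vtyp$ and that the half-space $H_z=\{y:\|y-z\|\ge\|y\|\}$ contributed by $z\in\Zcal$ has its bounding hyperplane at distance $\|z\|/2$ from $\orig$; hence $\inr(\Vtyp)\ge\tfrac12\min_{z\in\Zcal}\|z\|$, and $\Pee(\min_z\|z\|\le1-\eps)=1-e^{-(1-\eps)^d}\to1$, so $\inr(\Vtyp)\ge\tfrac12-\eps$ with high probability. For $\diam$ I would fix a unit vector $u$ and observe, by the displayed identity and a union bound, that $\Pee\big(\{(1-\eps)u,-(1-\eps)u\}\subseteq\Vtyp\big)\ge2e^{-(1-\eps)^d}-1\to1$, so $\diam(\Vtyp)\ge2(1-\eps)$ w.h.p.; combined with $\outr\ge\tfrac12\diam$ this also gives $\outr(\Vtyp)\ge1-\eps$ w.h.p. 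For $\meanw$, writing $w(\Vtyp,u)$ for the width of $\Vtyp$ in direction $u$ (so $\meanw(\Vtyp)=\Ee_u\,w(\Vtyp,u)$ for $u$ uniform on the unit sphere), the same union bound gives $\Pee\big(w(\Vtyp,u)<2-\eps\big)\le2\big(1-e^{-(1-\eps/2)^d}\big)=:\eta_d\to0$ for every $u$; by Fubini and Markov, w.h.p.\ the fraction of directions $u$ with $w(\Vtyp,u)<2-\eps$ is at most $\sqrt{\eta_d}$, and as $w(\Vtyp,\cdot)\ge0$ this forces $\meanw(\Vtyp)\ge(2-\eps)(1-\sqrt{\eta_d})\ge2-2\eps$ w.h.p.

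\emph{Upper bounds.} The crucial one is $\outr(\Vtyp)\le1+\eps$, for which it suffices to show $\Vtyp\subseteq B(\orig,1+\eps)$ w.h.p.\ (then $\diam\le2\outr\le2+2\eps$ and $\meanw\le\diam\le2+2\eps$ follow). If $\Vtyp\not\subseteq B(\orig,1+\eps)$ then, as $\Vtyp$ is convex and contains $\orig$, it meets the sphere $\partial B(\orig,1+\eps)$, so some $y$ with $\|y\|=1+\eps$ satisfies $\Zcal\cap B(y,1+\eps)=\emptyset$. I would fix a $\delta(1+\eps)$-net $\Ncal$ of that sphere with $|\Ncal|\le(3/\delta)^d$: any such $y$ has a net point $y'$ with $B\big(y',(1+\eps)(1-\delta)\big)\subseteq B(y,1+\eps)$, hence $\Zcal\cap B\big(y',(1+\eps)(1-\delta)\big)=\emptyset$. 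Choosing $\delta$ with $(1+\eps)(1-\delta)\ge1+\tfrac\eps2$, a union bound yields
$$\Pee\big(\Vtyp\not\subseteq B(\orig,1+\eps)\big)\le(3/\delta)^d\,e^{-(1+\eps/2)^d}\longrightarrow0,$$
because the doubly-exponentially small void probability overwhelms the single-exponential size of the net. For $\inr$ I would take $\delta<2\eps$ and let $\Zcal_\delta=\{z\in\Zcal:\|z\|\le1+\delta\}$, whose cardinality is Poisson with mean $(1+\delta)^d\to\infty$; by Wendel's theorem applied to the (conditionally i.i.d.\ uniform) directions $z/\|z\|$, $z\in\Zcal_\delta$, these directions w.h.p.\ are not all contained in a closed half-space through $\orig$. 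On that event, were $c$ an incenter with $\dist(c,\partial\Vtyp)>\tfrac12+\eps$, then for every $z\in\Zcal_\delta$ we would get $\langle c,z/\|z\|\rangle=\|z\|/2-\dist(c,\partial H_z)\le\tfrac{1+\delta}2-\dist(c,\partial\Vtyp)<\tfrac\delta2-\eps<0$, forcing $c=\orig$; but then $\inr(\Vtyp)=\tfrac12\min_z\|z\|\le\tfrac{1+\delta}2<\tfrac12+\eps$, a contradiction. Hence $\inr(\Vtyp)\le\tfrac12+\eps$ w.h.p.

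The step I expect to be the main obstacle is the circumradius upper bound: the trick is to discretise over the sphere of radius $1+\eps$ while \emph{shrinking the balls} (rather than the sphere) by the factor $1-\delta$, so that the void probability remains $e^{-((1+\eps)(1-\delta))^d}$ with $(1+\eps)(1-\delta)$ still exceeding $1$ --- hence doubly-exponentially small --- and thus beats the net cardinality $(3/\delta)^d$. Everything else rests only on elementary convex geometry (the distance-to-hyperplane description of $\inr$, the inequalities $\diam\le2\outr$ and $\meanw\le\diam$), Poisson void probabilities, and Wendel's theorem; in particular no Blaschke--Petkantschin computation or explicit formula for $\Ee\,f_0(\Vtyp)$ is required.
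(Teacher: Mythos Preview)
Your argument is correct (with one harmless slip: in the inradius lower bound you wrote $\Pee(\min_z\|z\|\le1-\eps)\to1$ where you need $\to0$, since $(1-\eps)^d\to0$; the conclusion you draw is the right one). The scaling reduction, the void-probability identity $\Pee(x\in\Vtyp)=e^{-\|x\|^d}$, and the lower bounds for all four quantities match the paper's almost verbatim, including the Fubini--Markov treatment of the mean width.

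The two upper bounds, however, are handled by genuinely different arguments. For $\outr\le1+\eps$, the paper proves the stronger statement that every \emph{vertex} of $\Vtyp$ has norm $<1+\eps$ w.h.p., via a cube dissection of $\eR^d\setminus B(\orig,1+\eps)$ together with a tail sum over the annuli $B(\orig,n+1)\setminus B(\orig,n)$; your net-on-a-single-sphere argument is shorter and more direct for this particular claim, though the paper's vertex bound is not wasted---it is reused for the results on the $k$-skeleton. For $\inr\le\tfrac12+\eps$, the paper first proves a translation inequality $\Pee(B(p,r)\subseteq\Vtyp)\le\sqrt{\Pee(B(\orig,r)\subseteq\Vtyp)}$ (by a half-ball argument) and then runs a grid union bound over candidate incenters $p\in B(\orig,1/2)$. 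Your Wendel-based argument is a different idea altogether: instead of searching over candidate centers, you use that once the normalized directions of the Poisson points in $B(\orig,1+\delta)$ positively span $\eR^d$ (which Wendel guarantees w.h.p.\ since their number is exponential in $d$), any incenter with large inradius would have negative inner product with all of them and hence must be $\orig$ itself. Both routes are clean; yours imports Wendel's theorem but sidesteps the auxiliary translation lemma and the second discretisation.
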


\noindent
To clarify, we emphasize that in the above theorem the intensity $\lambda$ is allowed to vary with the dimension.
Applying Stirling's approximation to the Gamma function in the expression~\eqref{eq:volball} for $\vol(B)$ below,
it is easily seen that in the case when $\lambda > 0$ is a fixed constant, the denominators
in Theorem~\ref{thm:main} can be replaced by $\sqrt{2\pi e/ d}$.

We would like to mention that while finalizing this paper, we learned that K.~Alishahi has shown in his PhD 
thesis~\cite{AlishahiThesis}
that the ratio $\outr(\Vtyp)/\inr(\Vtyp)$ tends to 2 in probability as $d\to\infty$, using different methods 
to the ones we employ. Alishahi's thesis is written in Persian and does not appear to 
be available online at the time of writing.

We also offer the following less precise result on the width of the typical cell:

\begin{proposition}\label{prop:width}
We have, for every fixed $\eps>0$ and $\lambda = \lambda(d) > 0$:

$$ \Pee\left( \frac{2\sqrt{5}}{2+\sqrt{5}}-\eps \leq \frac{\width(\Vtyp)}{\sqrt[d]{\lambda\cdot\vol(B)}} 
\leq \frac{3}{2}+\eps \right) = 1 - o_d(1). $$

\end{proposition}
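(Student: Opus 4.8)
The plan is to obtain the two bounds from two rather different considerations. The rescaled width $\width(\Vtyp)/\sqrt[d]{\lambda\vol(B)}$ is scale-invariant, so I may as well normalize and assume $\lambda\vol(B) = 1$, i.e. work with a Poisson process of the corresponding intensity; equivalently, I keep $\lambda$ but track the normalizing factor $R_d := \sqrt[d]{\lambda\vol(B)}$ throughout. The key structural fact I will lean on is the standard description of the typical Poisson-Voronoi cell: a point $x$ lies in $\Vtyp$ iff the ball $B(x,\|x\|)$ contains no point of $\Zcal$, so $\Vtyp = \bigcap_{z\in\Zcal} H_z$ where $H_z = \{x : \|x-z\| \ge \|x\|\}$ is the halfspace bounded by the perpendicular bisector of $\orig$ and $z$. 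Its support function in direction $u$ is $h(\Vtyp,u) = \min_{z\in\Zcal, \langle z,u\rangle > 0} \frac{\|z\|^2}{2\langle z,u\rangle}$, and $\width(\Vtyp) = \min_{\|u\|=1}\big(h(\Vtyp,u) + h(\Vtyp,-u)\big)$.

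For the \textbf{upper bound} $\width(\Vtyp) \le (3/2+\eps)R_d$, it suffices to exhibit, with probability $1-o_d(1)$, a single direction $u$ in which the cell is narrow. I expect the right construction is to look for a pair of Poisson points $z_1, z_2$ that are ``antipodal and close'': both at distance roughly $r R_d$ from the origin for a suitable $r$, with $z_2 \approx -z_1$, so that the two bisector halfspaces $H_{z_1}, H_{z_2}$ pinch the cell in direction $u = z_1/\|z_1\|$ to width about $\|z_1\| = rR_d$ — and then optimize $r$. A cleaner variant: by the results already available in the paper (or their proofs), with probability $1-o_d(1)$ there is a Poisson point at distance $(1+o_d(1))R_d$ (this underlies $\outr \to R_d$ in Theorem \ref{thm:main}), and more strongly one can find such a point in any prescribed not-too-small solid angle. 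I will use a first-moment / Poisson-counting argument to show that for any target radius $r$ with $1/2 < r$, and any fixed small cap, the expected number of Poisson points $z$ with $\|z\| \in [rR_d, (r+\delta)R_d]$ and $z/\|z\|$ in that cap is either $e^{\Theta(d)}$ or $e^{-\Theta(d)}$ depending on whether $r$ exceeds the threshold $1$; the point is that once $r>1$ such points exist in \emph{every} direction with overwhelming probability, and a short geometric computation then bounds $h(\Vtyp,u)+h(\Vtyp,-u)$ by something approaching $\tfrac{3}{2}R_d$ after optimizing the interplay between the two chosen bisectors (the constant $3/2$ coming out of the optimization $\min_r \max(\text{contribution of } z_1, \text{contribution of }z_2)$, where the bisector of a point at distance $rR_d$ roughly $\theta$ away from $u$ contributes $\tfrac{r}{2\cos\theta}R_d$ to $h(\Vtyp,u)$).

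For the \textbf{lower bound} $\width(\Vtyp) \ge \big(\tfrac{2\sqrt5}{2+\sqrt5}-\eps\big)R_d$, I need the opposite: \emph{every} direction $u$ must see width at least this much, i.e. I must rule out any pinching pair of bisectors. Equivalently, for the minimizing direction $u^*$ there is a pair of ``opposing'' points $z_+$ (with $\langle z_+,u^*\rangle>0$) and $z_-$ (with $\langle z_-,u^*\rangle<0$) realizing the two support values; a separating-hyperplane / complementary-slackness argument shows the minimum width is attained either at such a genuinely two-sided configuration or degenerates, and in the degenerate case the width is comparable to $\inr$, which is $\approx \tfrac12 R_d > \tfrac{2\sqrt5}{2+\sqrt5}R_d$ — wait, it is not, since $\tfrac{2\sqrt5}{2+\sqrt5}\approx 1.056 > 1/2$, so I must be more careful: the point is rather that $\width \ge 2\inr$ always, giving $\ge R_d$-ish, but to push to $\tfrac{2\sqrt5}{2+\sqrt5}$ I need to use that a width-realizing strip must be supported by actual Poisson points on both sides, and both those points are at distance $\ge (1-o_d(1))\tfrac12 R_d$ from the origin (no Poisson point is closer than $\inr$), in fact typically much more is forced by a union bound over directions. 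The clean way: parametrize a candidate narrow strip by its width $w R_d$ and its (signed) offset of the origin inside it; the two supporting Poisson points $z_\pm$ then lie in two thin slabs at known distances, and a covering argument over an $\eps$-net of directions on $S^{d-1}$ (there are $e^{O(d\log(1/\eps))}$ of them) together with the Poisson tail estimate $\Pee(\text{some point in a given bad region}) \le \Ee[\#] = e^{d\log(\text{vol-ratio})}$ shows that for $w$ below the stated threshold the expected number of (net-direction, supporting-pair) configurations is $e^{-\Theta(d)}$. The threshold $\tfrac{2\sqrt5}{2+\sqrt5}$ should emerge as the solution of a two-variable optimization balancing: (a) the requirement that the two bisector halfspaces actually cut off a strip of width $wR_d$, and (b) the volume (hence Poisson-probability, hence exponential rate) of the region where the two supporting points may sit, maximized over the offset — the constraint being that making the strip narrower forces the supporting points into regions whose volume is exponentially negligible. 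I would carry this out by writing the rate function $I(w) = \min_{\text{offset } t} \big(\text{something like } \tfrac{d}{2}\log(\cdots)\big)$ and checking $I(w) < 0$ (so the configuration a.s.\ does not occur) for all $w < \tfrac{2\sqrt5}{2+\sqrt5}$, then taking a union over the net and a final $\eps$-slack to pass from the net to all directions.

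The \textbf{main obstacle} is the lower bound, and within it the bookkeeping of the two-variable optimization that produces the odd constant $\tfrac{2\sqrt5}{2+\sqrt5}$: one has to set up the geometry of a minimal-width strip correctly — which Poisson points can support it, at what distances, and how the origin's position inside the strip is constrained — and then verify that the exponential rate of the corresponding Poisson event is strictly negative exactly below that threshold. The union bound over an $\eps$-net of $S^{d-1}$ is routine (the net has cardinality $e^{O_\eps(d)}$, negligible against an $e^{-\Theta(d)}$ per-direction bound once $\eps$ is fixed), and the passage from net-directions to all directions uses only that support functions are Lipschitz on $S^{d-1}$ with constant $\outr(\Vtyp) = O(R_d)$, which is controlled by Theorem \ref{thm:main}. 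The upper bound, by contrast, is a comparatively soft first-moment existence argument for one good pinching configuration, modulo the optimization that yields $3/2$.
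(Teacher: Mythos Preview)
Your upper bound is overcomplicated and does not clearly reach $3/2$. Looking for an antipodal pair $z_1,z_2$ at distance $\approx r$ to pinch the cell only yields width $\approx r$ if the two points are nearly opposite, but in high dimension a Poisson point in a prescribed narrow cone around a given direction does not exist with probability $1-o_d(1)$; two ``nearest'' Poisson points are essentially orthogonal, so the second bisector does not help in the direction of the first. The paper's argument is much simpler and uses only \emph{one} Poisson point: with probability $1-o_d(1)$ one has $\Vtyp\subseteq B(\orig,1+\eps)$ (outradius), hence for every $z\in\Zcal$, in the direction $u=z/\|z\|$ the cell lies in the slab $\{-(1+\eps)\le \langle x,u\rangle\le \|z\|/2\}$, giving $\width(\Vtyp)\le 1+\eps+\|z\|/2$; taking $z$ to be the nearest Poisson point (norm $\le 1+\eps$ w.h.p.) yields $3/2+O(\eps)$.

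Your lower bound has a genuine gap: the $\eps$-net union bound over $S^{d-1}$ cannot improve on the trivial $\width\ge 2\inr\to 1$. For a fixed $u$, the event $\{h(\Vtyp,u)\le a\}$ is exactly $\{\Zcal\cap \overline{B}(au,a)\neq\emptyset\}$, which has probability $1-e^{-a^d}\approx a^d$ for $a<1$; since $h(\Vtyp,u)$ and $h(\Vtyp,-u)$ are independent, a Laplace computation gives $\Pee(w(u,\Vtyp)\le w)=(w^2/4+o_d(1))^d$ at exponential scale. A $\delta$-net of $S^{d-1}$ has $(C/\delta)^d$ points, so the union bound requires $w^2/4<\delta/C$, while passing from the net to all directions via the Lipschitz bound $|w(u,\Vtyp)-w(u',\Vtyp)|\le 2\|u-u'\|\outr(\Vtyp)$ costs an additive $2\delta$. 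Optimizing over $\delta$ you never get above $w=1$, which is what $2\inr$ already gives. Reparametrizing by ``offset'' does not help: the maximum of $a^d b^d$ over $a+b=w$ is still $(w/2)^{2d}$.

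The paper's lower bound proceeds by a completely different route. It passes to the polar body $\Vtyp^\circ=\conv(\Ycal)$ where $\Ycal=\{2z/\|z\|^2:z\in\Zcal\}$, uses the identity $\varphi(u,\Vtyp)=1/\psi(u,\Vtyp^\circ)$, and proves $\sup_u(\psi(u,\Vtyp^\circ)+\psi(-u,\Vtyp^\circ))\le 2+r+\eps$ with $r=4/\sqrt5$. The key step is a second-moment (Mecke) computation showing that, w.h.p., any two points of $\Ycal$ of norm $>r$ make an angle in $(\alpha,\pi-\alpha)$ with $\alpha=\arcsin(r^2/4)$; a short geometric lemma then shows that no point of norm $>r+\eps$ can lie in $\conv(\Ycal)$ unless some $y\in\Ycal$ of norm $>r$ lies within angle $\alpha/2$ of it, and two such witnesses in opposite directions would violate the angle condition. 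The AM--HM inequality converts the bound on $\psi(u)+\psi(-u)$ into $\width(\Vtyp)\ge 4/(2+r)=2\sqrt5/(2+\sqrt5)$. The specific value of $r$ is chosen so that $\cos(\alpha/2)=r/2$, which is what makes the geometric lemma go through; this is the optimization producing the constant, not a rate-function balance over strip offsets.
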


To clarify, we emphasize that in the above result we choose $\eps>0$ fixed as the dimension 
increases towards infinity.
Note that $2\sqrt{5}/(2+\sqrt{5})\approx 1.0557 > 1$. So Proposition~\ref{prop:width} 
shows that the width is both bounded away by a multiplicative constant from twice the inradius (a trivial lower bound) 
and bounded away by a multiplicative constant from the diameter (a trivial upper bound).
We were not able to prove, but conjecture that $\width(\Vtyp)/\sqrt[d]{\lambda \cdot \vol(B)}$ converges to a constant
in probability.

\vspace{1ex}

A well-known folklore result states that $\Ee \vol(\Vtyp) = 1/\lambda$, in any dimension. (This can for instance be
seen from~\eqref{eq:ergodic} together with some relatively straightforward considerations.) 
Alishahi and Sharifitabar~\cite{AlishahiSharifitabar08} have shown that $\Var\left(\vol(\Vtyp)\right) = o_d(1)$ as 
the dimension $d\to\infty$ and the intensity $\lambda$ is kept constant, which 
in particular implies that the volume of $\Vtyp$ tends to $1/\lambda$ in probability as 
$d\to\infty$. In the light of~\eqref{eq:ergodic}, this can be informally paraphrased as: ``in high dimension, almost all
Voronoi cells have volume arbitrarily close to $1/\lambda$''.
The result of Alishahi and Sharifitabar was later extended by Yao~\cite{Yao}, who showed that the variance of the 
volume of the intersection of $\Vtyp$ with any measurable set is bounded by the variance of the volume of $\Vtyp$.
Additional results in~\cite{AlishahiSharifitabar08} imply that if $r = \sqrt[d]{1/\lambda \cdot \vol(B)}$ is such that 
$\vol(B(\orig,r)) = 1/\lambda$ then almost all mass of $\Vtyp$ is contained in $B(\orig,(1+\eps)r)$ and 
almost all mass of $B(\orig,(1-\eps)r)$ is contained in $\Vtyp$.
By this we mean that both the ratios $\vol(\Vtyp \cap B(\orig,(1-\eps) r)) / \vol(B(\orig,(1-\eps)r))$ and 
$\vol(\Vtyp \cap B(\orig,(1+\eps) r)) / \vol(\Vtyp)$ tend to one in probability as 
the dimension $d\to\infty$.
This can be interpreted as credence for the idea that the typical cell is somehow ``ball like'', at least as far as the 
volume is concerned.
On a similar note, H\"ormann et al.~\cite{HoermannEtal} investigated the asymptotics of the expected number of 
$k$-faces of $\Vtyp$ as the dimension goes to infinity.
Based on their findings, they mention (page 13, paragraph following Theorem 3.20) ``... we roughly speaking see that 
the typical Poisson-Voronoi cells are approximately spherical in the mean ...''.

In contrast, our results on diameter, mean width and outradius all seem to support the idea the typical cell is somehow close to 
a ball of the same volume, while the (proof of the) results on the inradius and width 
suggest the typical cell behaves rather differently from a ball of the same volume. 
Our results for instance imply that the Hausdorff distance between $\Vtyp$ and any ball is large (i.e.~at least 
of the same order as the diameter of $\Vtyp$). 
For completeness we spell out the short argument demonstrating this in Appendix~\ref{sec:Hausdorff}.

During the course of the proofs of the above results, we will derive and heavily rely on 
the following observation that is of independent interest: 
with probability $1-o_d(1)$, {\em all} vertices of $\Vtyp$ have norm 
close to $\sqrt[d]{\lambda\cdot\vol(B)}$.
This generalizes as follows. Let $\Fcal_k(P)$ denote the set of $k$-faces of the polytope $P$.
The union $\bigcup \Fcal_k(P)$ of all $k$-faces is sometimes also called the $k$-skeleton of $P$.

\begin{proposition}\label{prop:radiussmallface}
Let $\eps>0$ be fixed and $k=k(d)$ be such that $d-k\to \infty$.
For any $\lambda=\lambda(d)>0$ we have:

$$ \Pee\left( \bigcup \Fcal_k(\Vtyp) \subseteq B(\orig,(1+\eps)r)
\setminus B(\orig,(1-\eps)r) \right) 
\xrightarrow[d\to\infty]{} 1, $$

\noindent
where $r = r(d,\lambda) := \sqrt[d]{\lambda\cdot\vol(B)}$.
\end{proposition}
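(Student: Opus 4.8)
The statement asserts that all $k$-faces of $\Vtyp$ lie in a thin annulus around radius $r = \sqrt[d]{\lambda \vol(B)}$, provided $d-k \to \infty$. Since $k$-faces are contained in the union of their vertices' convex hulls, and the annulus $B(\orig,(1+\eps)r) \setminus B(\orig,(1-\eps)r)$ is \emph{not} convex, this is genuinely a statement about the positions of individual faces, not merely of vertices. The natural strategy is a union bound over faces together with a sharp estimate on the probability that a \emph{given} candidate face of $\Vtyp$ is far from the sphere of radius $r$. So the plan is: (1) recall the combinatorial/geometric description of when a set of $k+1$ Poisson points determines a $k$-face of $\Vtyp$; (2) for a $k$-face $F$, relate its distance from the origin to the circumradius (within its supporting affine flat) of the Poisson points defining it, and to the empty-ball condition that certifies $F$ is a face; (3) show that the probability a fixed configuration yields a face whose affine hull passes outside the annulus is exponentially small in $d$ at a rate strictly beating the exponential number of candidate faces; (4) conclude by a first-moment (Mecke/Slivnyak) computation.

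**Key steps.** First I would use the standard fact (Schneider--Weil, Ch.~10) that a $k$-face of $\Vtyp$ corresponds to a subset $\{z_1,\dots,z_{d-k}\} \subseteq \Zcal$ (together with the origin, in the typical-cell picture) such that the $(d-k)$-dimensional sphere through $\orig, z_1,\dots,z_{d-k}$ — more precisely, the circumsphere of these points within the affine flat they span — is ``empty'' of other Poisson points on one side, and the face $F$ is the intersection of $\Vtyp$ with the orthogonal $k$-flat through the circumcenter. The crucial geometric point: the circumcenter $c$ of $\orig, z_1, \dots, z_{d-k}$ lies in the affine hull, at distance $\rho$ (the circumradius) from each; the face $F$ lies in $c + L^\perp$ where $L = \linhull(z_1,\dots,z_{d-k})$; and every point of $F$ has norm at least $\|c\|$ and — if $F$ is bounded and small — norm close to $\|c\| \ge \rho$ actually no: one needs $\|x\| \ge$ (distance from $\orig$ to the flat $c+L^\perp$), which equals the length of the projection of $c$ onto $L$. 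The cleanest route is: the open ball $B(c,\rho)$ is empty of $\Zcal$ (this is what makes the relevant simplex a Delaunay face), so $\vol(B(c,\rho)) $ is dominated by an Exponential$(\lambda)$ random variable, giving $\rho \lesssim r$ with overwhelming probability, while a lower bound on $\rho$ comes from the requirement that $\orig$ itself be on the empty sphere together with the $z_i$. Then I would show that for the \emph{affine hull} $H$ of $F$, $\dist(\orig, H) \in [(1-\eps)r, (1+\eps)r]$ with high probability, using that $\orig \in \bar B(c,\rho)$ (norm $\rho$ from $c$) forces the foot of $\orig$ on $H$ to be within $\rho$, and the empty-ball volume bound forces $\rho$ into the annulus.

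**The main obstacle.** The hard part is controlling the \emph{union bound}: the number of candidate $(d-k)$-subsets — equivalently $k$-faces — is itself exponentially large in $d$ (of order $(k+1)^{(k+1)/2}/k^{k/2}$ to the $d$, per the abstract, or cruder bounds suffice here), so the per-face failure probability must decay at a strictly faster exponential rate, \emph{uniformly in $k$ as long as $d-k\to\infty$}. The tail of $\vol(B(c,\rho))$ being sub-exponential is not by itself enough when $k$ is close to $d$; one must exploit that $d-k$ points still span a flat of dimension $d-k \to \infty$, so the circumradius concentrates (a $\chi$-type concentration in growing dimension) rather than merely having an exponential tail. I would therefore condition on the circumcenter and radius, use that the empty ball $B(c,\rho)$ has a Poisson$(\lambda\vol(B(c,\rho)))$ count equal to zero, and observe $\vol(B(c,\rho)) = \rho^d \vol(B)$, so $\Pee(\rho > (1+\eps)r) \le \exp(-((1+\eps)^d - 1))$, which crushes any $C^d$ factor; the lower tail $\Pee(\rho < (1-\eps)r)$ needs instead the observation that shrinking $\rho$ below $(1-\eps)r$ while keeping $d-k+1$ prescribed points on the sphere is a codimension-$(d-k)$ event whose Poisson/Blaschke--Petkantschin density is exponentially suppressed when $d-k\to\infty$. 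Combining the Mecke formula integral over configurations with these two tail estimates, and absorbing the face-count factor, yields the claimed $o_d(1)$.

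**Assembling.** Concretely: by the Mecke equation, $\Ee\bigl[\#\{k\text{-faces } F : F \not\subseteq \text{annulus}\}\bigr]$ equals an integral over $(d-k)$-tuples of points in $\eR^d$ of the product of (the probability the defining ball is empty) and (the indicator that the resulting flat exits the annulus); I change variables to (circumcenter $c$, radius $\rho$, spherical angles) via Blaschke--Petkantschin, integrate out the empty-ball probability $e^{-\lambda \rho^d \vol(B)}$, and bound the remaining angular integral crudely by the face-count growth rate. The two tail bounds above show this expectation is $o_d(1)$, so by Markov's inequality the number of offending $k$-faces is $0$ with probability $1-o_d(1)$, which is exactly the claim. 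I expect steps (1)–(2) and the final assembly to be routine given the machinery already set up in the paper for the vertex case ($k=0$, or rather $d-k = d$), and the only real work to be the uniform-in-$k$ tail estimate on the circumradius in the regime $d - k \to \infty$.
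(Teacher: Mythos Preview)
Your proposal is substantially more complicated than what is needed, and it has a genuine gap in the outer-ball direction and an under-specified step in the inner-ball direction.

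\textbf{Outer ball.} You try to control $\rho$, the circumradius of the $(d-k)$ defining Poisson points, from above via the empty-ball probability. But for $x\in F$ one has $\|x\|\ge\rho$, so an upper bound on $\rho$ says nothing about $\max_{x\in F}\|x\|$. You correctly observe that the annulus is not convex, but the outer ball \emph{is}: since every $k$-face lies in the convex hull of the vertices, and all vertices have norm $<(1+\eps)r$ with probability $1-o_d(1)$ by Lemma~\ref{lem:vertub}, the containment $\bigcup\Fcal_k(\Vtyp)\subseteq B(\orig,(1+\eps)r)$ is immediate. The paper does exactly this.

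\textbf{Inner ball.} Your Mecke/Blaschke--Petkantschin union bound over $(d-k)$-tuples is not clearly tractable in this regime. The computation in Section~\ref{sec:regsimpl} is for \emph{fixed} codimension, where the normalizing constants coming out of Blaschke--Petkantschin are $d^{O(1)}$; with $d-k\to\infty$ those constants blow up and your ``codimension-$(d-k)$ event \dots exponentially suppressed'' is left as a hope precisely where the uniform-in-$k$ control is needed. The face count you would have to beat is also not $((k+1)^{(k+1)/2}/k^{k/2})^d$ --- that is the number of $(d-k)$-faces for fixed $k$ --- but the number of $k$-faces, which for small $k$ (e.g.\ vertices) is of order $d^{(1+o(1))d/2}$.

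The paper avoids all of this by taking the union bound over \emph{space} rather than over faces. If some $x\in\bigcup\Fcal_k(\Vtyp)$ has $\|x\|<(1-\eps)r$, then $x$ is equidistant from $\orig$ and from $d-k$ Poisson points, so the closure of $B(x,\|x\|)$ contains at least $d-k$ Poisson points. Covering $B(\orig,(1-\eps)r)$ by the $e^{O(d)}$ cubes of $\Qcal_\delta$, some cube corner $c$ then satisfies $|\Zcal\cap B(c,(1-\eps+2\delta)r)|\ge d-k$, an event of probability $e^{-\Omega(d(d-k))}$ by the Chernoff bound (Lemma~\ref{lem:chernoff}), since the mean of this Poisson count is $(1-\eps+2\delta)^d=o(d-k)$. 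The product $e^{O(d)}\cdot e^{-\Omega(d(d-k))}$ is $o_d(1)$ because $d-k\to\infty$. That is the whole proof of the non-trivial direction --- no Mecke formula, no Blaschke--Petkantschin, and no face enumeration.
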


This last proposition tells us that, with probability $1-o_d(1)$, all faces of non-constant co-dimension 
are contained in an annulus around $\orig$ of width $o\left( \sqrt[d]{\lambda \vol(B)} \right)$ and 
(inner) radius $(1+o_d(1)) \cdot \sqrt[d]{\lambda \vol(B)}$.
From this it can be seen that, with probability $1-o_d(1)$, every face of non-constant co-dimension
is ``microscopic'' (i.e.~of diameter $o\left( \sqrt[d]{\lambda \vol(B)} \right)$).

\begin{corollary}\label{cor:diamsmallface}
 Let $k=k(d)$ be such that $d-k\to \infty$.
For any $\lambda=\lambda(d)>0$ we have:

$$ \frac{\max_{F \in \Fcal_k(\Vtyp)} \diam(F)}{\sqrt[d]{\lambda\cdot\vol(B)}} 
\xrightarrow[d\to\infty]{\Pee} 0.
$$ 
\end{corollary}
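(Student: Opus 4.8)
The plan is to deduce Corollary~\ref{cor:diamsmallface} from Proposition~\ref{prop:radiussmallface} by a purely geometric argument that bounds the diameter of each $k$-face in terms of the thickness of the annulus in which it lives. The key point is that a $k$-face $F$ of $\Vtyp$ is a convex polytope of dimension $k \le d-1$, hence it lies in a supporting hyperplane $H$ of $\Vtyp$; by general facts about the Poisson--Voronoi tessellation, $H$ is the perpendicular bisector of $\orig$ and some Poisson point $z$, so $H$ is at distance $\tfrac12\norm{z}$ from the origin. More usefully, every $k$-face is contained in some facet, and a facet lying in a hyperplane at distance $t$ from $\orig$ and contained in the annulus $B(\orig,(1+\eps)r)\setminus B(\orig,(1-\eps)r)$ can have diameter at most roughly $2\sqrt{((1+\eps)r)^2 - t^2}$, where $t \ge (1-\eps)r$; this is maximized when the face reaches from one ``end'' of its chord to the other inside the annulus.

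Concretely, the first step is to fix $\eps>0$ and condition on the event from Proposition~\ref{prop:radiussmallface}, call it $\Ecal_\eps$, which has probability $1-o_d(1)$; note this uses that $d-k\to\infty$. On $\Ecal_\eps$, take any $F\in\Fcal_k(\Vtyp)$ and any two points $x,y\in F$. Since $F$ lies in a hyperplane $H$ with $\dist(\orig,H)=:t$, write $x = p + u$ and $y = p + v$ where $p$ is the foot of the perpendicular from $\orig$ to $H$ and $u,v \perp p$. Then $\norm{x}^2 = t^2 + \norm{u}^2$ and similarly for $y$, so on $\Ecal_\eps$ we get $\norm{u}^2 = \norm{x}^2 - t^2 \le ((1+\eps)r)^2 - t^2$, and likewise $\norm{v}^2 \le ((1+\eps)r)^2 - t^2$. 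Hence $\norm{x-y} = \norm{u-v} \le \norm{u}+\norm{v} \le 2\sqrt{((1+\eps)r)^2 - t^2}$. The second step is to bound $t$ from below: since $H$ is a supporting hyperplane of $\Vtyp$ and $\orig\in\inter(\Vtyp)$, and more directly since $F\subseteq H$ and any point of $F$ has norm at least $(1-\eps)r$ on $\Ecal_\eps$ while $t$ is the distance from $\orig$ to the nearest point of $H$, we have $t\ge(1-\eps)r$. Therefore $\norm{x-y}\le 2\sqrt{((1+\eps)r)^2-((1-\eps)r)^2} = 2r\sqrt{4\eps} = 4\sqrt{\eps}\,r$.

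Taking the supremum over $x,y\in F$ and then over $F\in\Fcal_k(\Vtyp)$ gives, on $\Ecal_\eps$,
\[
\frac{\max_{F\in\Fcal_k(\Vtyp)}\diam(F)}{\sqrt[d]{\lambda\cdot\vol(B)}} \;=\; \frac{\max_{F}\diam(F)}{r} \;\le\; 4\sqrt{\eps}.
\]
Since $\Pee(\Ecal_\eps) = 1-o_d(1)$ for every fixed $\eps>0$, it follows that for every $\eps>0$,
\[
\limsup_{d\to\infty}\;\Pee\!\left(\frac{\max_{F\in\Fcal_k(\Vtyp)}\diam(F)}{\sqrt[d]{\lambda\cdot\vol(B)}} > 4\sqrt{\eps}\right) = 0,
\]
and as $\eps>0$ is arbitrary this is exactly convergence to $0$ in probability, proving the corollary.

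The argument is genuinely short, so there is no ``hard part'' of the usual kind; the only points requiring a little care are (i) justifying that every $k$-face of $\Vtyp$ lies in a hyperplane through which $\Vtyp$ is supported and that the distance from $\orig$ to that hyperplane is at least the minimal norm of points of the face — this is where one invokes convexity of $\Vtyp$ together with $\orig\in\inter\Vtyp$, together with the fact (used only qualitatively) that $k\le d-1$, which is where the hypothesis $d-k\to\infty$ enters; and (ii) making sure the orthogonal decomposition $x = p+u$ with $u\perp p$ is valid, i.e.\ that $p$ is the same for all points of $F$, which holds because $p$ is the metric projection of $\orig$ onto the affine hyperplane $H\supseteq F$. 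Everything else is the elementary estimate $\norm{u-v}\le\norm u+\norm v$ and the Pythagorean identity, together with feeding in the annulus bound from Proposition~\ref{prop:radiussmallface}.
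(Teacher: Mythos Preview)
Your argument has a genuine gap in the lower bound $t\ge(1-\eps)r$. You take $p$ to be the foot of the perpendicular from $\orig$ to the supporting hyperplane $H$, and then argue that since every point of $F$ has norm at least $(1-\eps)r$, so does $p$. But $p$ need not lie in $F$: it is the closest point of $H$ to the origin, not the closest point of $F$. In fact $t=\dist(\orig,H)\le\min_{f\in F}\norm{f}$, which is an inequality in the wrong direction. Concretely, the paper itself exhibits (with probability $1-o_d(1)$) facet-defining hyperplanes at distance roughly $\tfrac12\,r$ from the origin; a $k$-face contained in such a facet lies in a hyperplane $H$ with $t\approx\tfrac12\,r$, far below $(1-\eps)r$. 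With $t\approx\tfrac12$ your bound $2\sqrt{(1+\eps)^2-t^2}$ is of order $1$, not $\sqrt{\eps}$.

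The fix is easy and brings you exactly to the paper's argument (Lemma~\ref{lem:diamconvinann}): use only the convexity of $F$ and its containment in the annulus, never the ambient hyperplane. One way: for $x,y\in F$ the midpoint $m=(x+y)/2$ is again in $F$, hence $\norm{m}\ge(1-\eps)r$; since one of the angles $\angle x\orig m,\angle y\orig m$ is at least $\pi/2$, the cosine rule gives $\norm{x-m}^2\le\norm{x}^2-\norm{m}^2\le((1+\eps)r)^2-((1-\eps)r)^2=4\eps r^2$, so $\norm{x-y}\le 4\sqrt{\eps}\,r$. Equivalently, you can take $p'$ to be the point of $F$ nearest to $\orig$ (rather than the foot on $H$); the projection-onto-a-convex-set inequality $\langle p',x-p'\rangle\ge0$ then gives $\norm{x-p'}^2\le\norm{x}^2-\norm{p'}^2\le4\eps r^2$ and the same $4\sqrt{\eps}\,r$ bound follows by the triangle inequality. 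Either version avoids any reference to $H$ or $t$, and the rest of your write-up then goes through unchanged.
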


\noindent
(For completeness, we spell out the short derivation of Corollary~\ref{cor:diamsmallface} from 
Proposition~\ref{prop:radiussmallface} in Section~\ref{sec:smallface}.)

Theorem~\ref{thm:main} tells us that Proposition~\ref{prop:radiussmallface} 
does not extend to facets, i.e.~faces of co-dimension one. (If all facets of a polytope $P$ with $\orig \in P$ lie in the annulus from Proposition~\ref{prop:radiussmallface} then 
$\outr(P)/\inr(P)$ must be close to one.)
What is more, Corollary~\ref{cor:diamsmallface} does not extend to facets. 
As part of the proof of Proposition~\ref{prop:width} we will exhibit the existence (with probability $1-o_d(1)$)
of facets that have distance roughly $1/2 \cdot \sqrt[d]{\lambda\cdot\vol(B)}$ to the origin.
Since, with probability $1-o_d(1)$, all vertices have distance roughly $\sqrt[d]{\lambda\cdot\vol(B)}$ to the origin,
some pair of vertices 
on such a face will have distance $\Omega\left( \sqrt[d]{\lambda\cdot\vol(B)} \right)$.
Although we will not prove this explicitly here, similar remarks apply to faces of any constant co-dimension.

However, as we will show, the vast majority of the faces of constant co-dimension have 
microscopic diameter. We let $f_k(P) := |\Fcal_k(P)|$ denote the number of $k$-faces of the polytope $P$.

\begin{theorem}\label{thm:diambigface}
For every fixed $k \in \eN$ and every $\lambda=\lambda(d)>0$, we have 

$$ \frac{{\displaystyle \frac{1}{f_{d-k}(\Vtyp)}}\cdot\sum_{F \in \Fcal_{d-k}(\Vtyp)} \diam(F)}{\sqrt[d]{\lambda\cdot\vol(B)}} 
\xrightarrow[d\to\infty]{\Pee} 0.
$$ 
\end{theorem}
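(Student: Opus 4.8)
The plan is to show that, with probability $1-o_d(1)$, all but a vanishing fraction of the $(d-k)$-faces of $\Vtyp$ have microscopic diameter, so that the average diameter in the statement is dominated by this vanishing fraction. Fix $k\in\eN$. Each $(d-k)$-face $F$ of $\Vtyp$ corresponds to a choice of $k+1$ Poisson points $z_0,\dots,z_k\in\Zcal$ such that $F$ lies on the common boundary $\{x: \|x\| = \|x-z_0\| = \cdots = \|x-z_k\|\}$, i.e.\ on a $(d-k)$-dimensional affine subspace $A$ equidistant from $\orig$ and from those $k+1$ points. The face $F$ is then the set of points of $A$ that are closer to $\orig$ (equivalently to $z_0,\dots,z_k$) than to every other point of $\Zcal$; geometrically, $F$ is the Voronoi cell of $\orig$ \emph{within the subspace $A$}, relative to the orthogonal projections onto $A$ of all the points of $\Zcal$ (and of $\orig$). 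By the standard Poisson-Voronoi description of the typical cell, conditioned on the geometry of $A$, this projected process behaves (after accounting for the density factors coming from the projection) like a Poisson process on the $(d-k)$-dimensional subspace $A$, so $F$ is, in distribution, comparable to a typical Poisson-Voronoi cell in dimension $d-k$ around the projected origin $\orig_A$.

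The key quantitative input is the one already used to prove Proposition~\ref{prop:radiussmallface}: for a single face determined by a ``generic'' $(k+1)$-tuple, the diameter of $F$ is $o_d(r)$ with probability $1-o_d(1)$, because $F$ is confined to a thin annulus inside $A$ (the same annulus argument applies within $A$, since $\orig_A$ has small norm and the defining points project to norm roughly $r$). Concretely, I would fix a small threshold $\delta>0$ and call a $(d-k)$-face $F$ \emph{bad} if $\diam(F)>\delta r$; the goal is to bound the expected fraction of bad faces. Writing $N_{\mathrm{bad}}$ for the number of bad $(d-k)$-faces and $f_{d-k} = f_{d-k}(\Vtyp)$, we have
\begin{equation}\label{eq:avgsplit}
\frac{1}{f_{d-k}}\sum_{F\in\Fcal_{d-k}(\Vtyp)}\diam(F) \;\le\; \delta r \;+\; \frac{1}{f_{d-k}}\sum_{F:\ \diam(F)>\delta r}\diam(F).
\end{equation}
The first term is already $\le \delta r$; for the second, crucially, $\diam(F)\le \diam(\Vtyp)\le \outr(\Vtyp)+\outr(\Vtyp)$, and by Theorem~\ref{thm:main} $\diam(\Vtyp)\le 3r$ with probability $1-o_d(1)$, so the second term is at most $3r\cdot N_{\mathrm{bad}}/f_{d-k}$ on that event. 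It therefore suffices to show $N_{\mathrm{bad}}/f_{d-k}\to 0$ in probability, and for this I would bound $\Ee\, N_{\mathrm{bad}}$ and compare it with a lower bound on $f_{d-k}$ (or on $\Ee f_{d-k}$, combined with concentration of $f_{d-k}$). Using the Mecke–Slivnyak formula for the Poisson process, $\Ee\, N_{\mathrm{bad}}$ is an integral over $(k+1)$-tuples $(z_0,\dots,z_k)$ of the probability that they span a $(d-k)$-face and that face is bad; since, conditionally on spanning a face, badness has probability $o_d(1)$ uniformly over the relevant configurations (this is exactly the per-face estimate behind Proposition~\ref{prop:radiussmallface}, applied inside $A$), we get $\Ee\, N_{\mathrm{bad}} = o_d(1)\cdot \Ee f_{d-k}$. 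Combining with a lower-tail bound on $f_{d-k}$ (which is positive and, by the results cited in the introduction on the number of $(d-k)$-faces, of the expected order with high probability) yields $N_{\mathrm{bad}}/f_{d-k} = o_d(1)$ with probability $1-o_d(1)$. Feeding this into~\eqref{eq:avgsplit} and letting $\delta\to 0$ after $d\to\infty$ gives the claim.

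The main obstacle is making the per-face badness estimate \emph{uniform} over the $(k+1)$-tuples that actually contribute to $\Ee\, N_{\mathrm{bad}}$, and in particular controlling the density factors introduced by projecting the Poisson process onto the random subspace $A$. One must verify that, for the tuples that typically span a face (equivalently, whose circumsphere configuration is non-degenerate and whose relevant radius is $\approx r$), the projected intensity on $A$ is, up to factors that do not affect the annulus argument, that of a genuine $(d-k)$-dimensional Poisson-Voronoi setup, so that the ``all low-dimensional faces live in a thin annulus'' argument of Proposition~\ref{prop:radiussmallface} can be run \emph{inside $A$}. Degenerate tuples (nearly collinear, or with very large or very small circumradius) must be shown to contribute negligibly to $\Ee f_{d-k}$, which should follow from the same volume/annulus estimates that already appear in the proof of Theorem~\ref{thm:main}. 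A secondary technical point is the lower bound / concentration for $f_{d-k}(\Vtyp)$ needed to turn the bound on $\Ee\, N_{\mathrm{bad}}$ into a bound on the ratio; this can be obtained from a second-moment estimate on $f_{d-k}$ or quoted from the asymptotics of expected face numbers referenced in the introduction, together with the observation that $f_{d-k}\ge 1$ always and the contribution of the event $\{f_{d-k}\ \text{atypically small}\}$ to the average is controlled by the deterministic bound $\diam(F)\le\diam(\Vtyp)$.
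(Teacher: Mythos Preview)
Your high-level decomposition~\eqref{eq:avgsplit} --- split into faces of diameter $\le \delta r$ and ``bad'' faces, bound the latter by $\diam(\Vtyp)\cdot N_{\mathrm{bad}}/f_{d-k}$ --- is exactly the right framework and is what the paper does. The gap is in how you propose to show $N_{\mathrm{bad}}/f_{d-k}\to 0$.

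Your mechanism is: condition on a tuple spanning a face, argue that the face is a typical Voronoi cell inside the affine subspace $A$, and invoke a per-face annulus estimate to get badness probability $o_d(1)$ uniformly. This does not work. First, the face $F\subseteq A$ is \emph{not} a Voronoi cell relative to the orthogonal projections of $\Zcal$ onto $A$: for $x\in A$ and $z\in\eR^d$ one has $\|x-z\|^2=\|x-\pi_A(z)\|^2+\|z-\pi_A(z)\|^2$, so the relevant diagram in $A$ is a Laguerre (power) diagram with weights $\|z-\pi_A(z)\|^2$, not a Voronoi diagram, and the projected point set is not a homogeneous Poisson process. Second --- and this is the decisive point --- the uniform conditional estimate is simply false. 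A $(d-k)$-face defined by $z_1,\dots,z_k$ with circumradius $\rho(z_1,\dots,z_k)$ bounded away from $1$ (say $\rho\approx 1/2$, think of a facet tangent to the inscribed ball when $k=1$) will, with probability bounded away from zero, have diameter of order $r$: its vertices still have norm $\approx 1$ while its closest point to $\orig$ has norm $\approx\rho$. So for such tuples the conditional badness probability is \emph{not} $o_d(1)$; these faces are genuinely large. (There is also an off-by-one: a $(d-k)$-face of $\Vtyp$ is defined by $k$ points of $\Zcal$, not $k+1$.)

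The paper's argument avoids all of this by splitting not on $\diam(F)$ but on the circumradius $\rho$ of the defining tuple. Every point $x\in F$ satisfies $\|x\|\ge\rho(z_1,\dots,z_k)$, and by Lemma~\ref{lem:vertub} all vertices have norm $<1+\eps$ with probability $1-o_d(1)$. Hence any face with $\rho\ge 1-\eps$ lies in the annulus $B(\orig,1+\eps)\setminus B(\orig,1-\eps)$ and, being convex, has diameter $\le 4\sqrt{\eps}$ by Lemma~\ref{lem:diamconvinann}. The remaining faces are among the $k$-tuples with $\rho<1-\eps$; their number is at most $X(1-\eps)$, and Lemma~\ref{lem:Iint} gives $\sqrt[d]{\Ee X(1-\eps)}\to(1-\eps)^k v_k<v_k$, while Theorem~\ref{thm:fass} gives $\sqrt[d]{f_{d-k}}\to v_k$. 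So $X(1-\eps)/f_{d-k}\to 0$ in probability, which is the desired bound on the bad fraction. In short: the ``rare bad faces'' are exactly those with small $\rho$, and their rarity is a counting statement (exponential gap between $(1-\eps)^k v_k$ and $v_k$), not a conditional-diameter statement.
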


\noindent
(For clarity, we remark that this last theorem implies that all but a negligible proportion of the faces of co-dimension $k$ 
have diameter $o\left( \sqrt[d]{\lambda \vol(B)} \right)$.)
We point out that, since each face contains a vertex, Theorem~\ref{thm:diambigface} and Proposition~\ref{prop:radiussmallface} 
together imply that, with probability 
$1-o_d(1)$, all but a negligible proportion of faces of co-dimension $k$ are contained 
in an annulus around $\orig$ of width $o\left( \sqrt[d]{\lambda \vol(B)} \right)$ and 
(inner) radius $(1+o_d(1)) \cdot \sqrt[d]{\lambda \vol(B)}$.

Some observations that we shall make in the course of the proof are of independent interest.
The first of these estimates the number of faces of fixed co-dimension $k$.
The (expected) number of faces of $\Vtyp$ is a theme that has been considered a fair bit 
in the literature (see e.g.~\cite{HoermannEtal,ZakharAdvMath}), but as far as we know the following result is new.

\begin{theorem}\label{thm:fass}
For every fixed $k \in \eN$ and every $\lambda=\lambda(d)>0$, we have 

$$ \sqrt[d]{f_{d-k}\left(\Vtyp\right)} \xrightarrow[d\to\infty]{\Pee} 
\frac{(k+1)^{(k+1)/2}}{k^{k/2}}.
$$

\end{theorem}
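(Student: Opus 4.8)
The plan is to compute the number of $(d-k)$-faces of $\Vtyp$ via the standard representation of Poisson-Voronoi faces in terms of ``empty-sphere'' configurations, and then carry out a first-moment plus second-moment (or concentration) argument. Recall that a $(d-k)$-face of $\Vtyp$ corresponds to a $(k+1)$-subset $\{z_1,\dots,z_{k+1}\}\subseteq\Zcal$ together with the origin such that the circumscribed sphere of $\{\orig,z_1,\dots,z_{k+1}\}$ (within the affine span, which is $(k+1)$-dimensional) has an empty ball on one side in the full space $\eR^d$; more precisely, the $(d-k-1)$-dimensional affine flat of points equidistant from all of $\orig,z_1,\dots,z_{k+1}$ must contain a point whose common distance to these $k+1$ points is strictly less than its distance to every other point of $\Zcal$. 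So $f_{d-k}(\Vtyp)$ is a sum over $(k+1)$-tuples from $\Zcal$ of an indicator of such an emptiness event. The first step is to write $\Ee f_{d-k}(\Vtyp)$ using the multivariate Mecke formula, obtaining a $(k+1)$-fold integral over $(\eR^d)^{k+1}$ of $\lambda^{k+1}$ times the probability that the relevant ball(s) are empty of the remaining Poisson points.

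Second, I would evaluate this integral asymptotically. By scaling we may normalise $\lambda\cdot\vol(B)=1$, so $r=1$. The integral naturally decomposes by conditioning on the ``center'' $c$ of the empty sphere and its radius $\rho$: the empty-ball probability is $\exp(-\lambda\vol(B(c,\rho)))=\exp(-\rho^d)$ (when one ball is empty; a constant factor accounts for which side), which forces $\rho=1+o(1)$ on the scale that matters, consistently with Proposition~\ref{prop:radiussmallface}. Conditioned on $c$ and $\rho$, the points $\orig,z_1,\dots,z_{k+1}$ lie on the sphere $\partial B(c,\rho)$ and the $z_i$ range over a $(d-1)$-sphere; a change of variables turns the Jacobian into a Gram-determinant (Blaschke--Petkantschin-type) factor, and the exponential volume term $e^{-\rho^d}$ together with the $\lambda^{k+1}$ prefactor and the surface-area asymptotics of high-dimensional spheres produces, after Stirling, the base $(k+1)^{(k+1)/2}/k^{k/2}$ raised to the power $d$ (the $k^{k/2}$ in the denominator should emerge from the Gram determinant of $k$ generic vectors near the equator of a high-dimensional sphere, and the $(k+1)^{(k+1)/2}$ from the corresponding $(k+1)$-point configuration including the origin). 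Thus I expect $\sqrt[d]{\Ee f_{d-k}(\Vtyp)}\to (k+1)^{(k+1)/2}/k^{k/2}$.

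Third, to upgrade the expectation to convergence in probability I would use concentration. A clean route is the second moment: $\Ee f_{d-k}(\Vtyp)^2$ is, by Mecke again, a sum over pairs of $(k+1)$-tuples; the ``diagonal-ish'' terms where the two tuples share $j$ points contribute, after a similar computation, something whose $d$-th root is at most the target base, and one argues that the dominant contribution comes from disjoint (or nearly disjoint) pairs, giving $\Ee f_{d-k}^2=(1+o(1))(\Ee f_{d-k})^2$ on the exponential scale, i.e.\ $\sqrt[d]{\Var f_{d-k}}$ is strictly below $\sqrt[d]{\Ee f_{d-k}}$, whence Chebyshev gives the lower-tail bound; the upper-tail bound $\Pee(f_{d-k}\ge (c+\eps)^d)\to 0$ follows from Markov applied to $\Ee f_{d-k}$. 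Alternatively, since we already control (via Proposition~\ref{prop:radiussmallface}) that all relevant vertices live in a thin annulus, one can restrict attention to a bounded region and invoke a Poisson concentration inequality; but the second-moment method seems most self-contained.

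The main obstacle, I expect, is the exact asymptotic evaluation of the $(k+1)$-fold integral in Step~2 --- in particular getting the constant $(k+1)^{(k+1)/2}/k^{k/2}$ rather than merely an exponential order. This requires carefully tracking the Gram-determinant Jacobian from the Blaschke--Petkantschin change of variables, the $\Gamma$-function ratios from the areas of the spheres $S^{d-1},\dots,S^{d-k-1}$, and the Laplace-type asymptotics of the angular integral $\int \exp(-\rho^d)\,(\dots)$ as $d\to\infty$, then applying Stirling uniformly. The second-moment bound in Step~3 is conceptually routine once Step~2 is in hand, since the shared-point terms are handled by the same machinery with fewer free variables and hence a strictly smaller exponential rate.
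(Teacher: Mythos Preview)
Your overall architecture---Mecke formula plus Blaschke--Petkantschin for the first moment, then a second-moment argument---matches the paper's, but there are two concrete gaps.

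First, an off-by-one in the face--tuple correspondence. A $(d-k)$-face of $\Vtyp$ is where $k+1$ Voronoi cells meet; one of these is the cell of the origin, so the face is determined by $k$ points $z_1,\dots,z_k\in\Zcal$, not $k+1$. You yourself observe that the equidistant flat of $\orig,z_1,\dots,z_{k+1}$ has dimension $d-k-1$, which confirms you are describing $(d-k-1)$-faces. Carried through, your computation would produce $v_{k+1}$ rather than $v_k=(k+1)^{(k+1)/2}/k^{k/2}$.

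Second, and more substantively, the mechanism you sketch for extracting the constant is not the right one. After the \emph{linear} Blaschke--Petkantschin formula (rather than a decomposition by centre and radius), the relevant expectation becomes, up to a factor $d^{O(1)}$,
\[
\int_{(\eR^k)^k} 1_{\{\rho(u_1,\dots,u_k)\leq r\}}\, D(u_1,\dots,u_k)^{\,d-k}\,\dd u_1\cdots\dd u_k,
\]
where $D=|\det(u_1|\cdots|u_k)|$ and $\rho$ is the circumradius of $\{\orig,u_1,\dots,u_k\}$. The $d$-th root of this integral is governed by $\max\{D:\rho\leq 1\}$, and the key geometric lemma (Lemma~\ref{lem:det}, resting on Proposition~\ref{prop:slepian}) identifies this maximum as $v_k$: it is $k!$ times the maximal volume of a $k$-simplex inscribed in the unit ball in $\eR^k$, attained precisely by the regular simplex. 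This extremal characterisation is the heart of the argument (and is also what drives Proposition~\ref{prop:Nkeps}). Your heuristic---``the $k^{k/2}$ from the Gram determinant of $k$ generic vectors near the equator''---identifies neither the correct integral nor the fact that the constant arises from an \emph{optimisation} over configurations, not from generic behaviour; the empty-ball factor $e^{-\rho^d}$ plays no role in producing the base $v_k$.

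On the second moment, the paper avoids computing $\Ee f_{d-k}(\Vtyp)^2$ directly. It sandwiches, with probability $1-o_d(1)$,
\[
\tfrac{1}{k!}\,Y \;\leq\; f_{d-k}(\Vtyp) \;\leq\; X(1+\delta),
\]
where $X(r)$ counts ordered $k$-tuples in $\Zcal$ with $\rho\leq r$ (no emptiness condition) and $Y$ counts those with $\rho\leq 1-\delta$ and an empty smallest circumscribed ball; the upper sandwich uses Lemma~\ref{lem:vertub}. The second-moment calculation (Lemma~\ref{lem:Iint}\ref{itm:Iint2}) is done only for $X(1-\delta)$, where there is no emptiness event to correlate, and is then transferred to $Y$ via $\Ee Y=(1+o_d(1))\Ee X(1-\delta)$ and $Y\leq X(1-\delta)$. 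Your direct route would additionally have to control the dependence between two overlapping empty-ball events, which is doable but unnecessary.
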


In a forthcoming article we will derive more precise asymptotics for $f_{d-k}\left(\Vtyp\right)$.
The constant in the right hand side of this last theorem is equals $k!$ times the $k$-dimensional volume of 
a regular $k$-simplex inscribed in $S^{k-1}$.
Each edge of such a simplex has length $\ell_k := \left(\frac{2(k+1)}{k}\right)^{1/2}$.
For $\eps>0$, we will say that $z_1,\dots,z_k \in \eR^d$ is an {\em $\eps$-near regular simplex} if 
$\ell_k - \eps < \norm{z_i-z_j} < \ell_k+\eps$ for all $0\leq i < j \leq k$ where $z_0 := \underline{0}$
denotes the origin.
We say that $z_1, \dots, z_k \in \Zcal$ {\em define a $(d-k)$-face of $\Vtyp$} if 
there is a face $F \in \Fcal_{d-k}(\Vtyp)$ such that 

$$ F = \Vtyp \cap \bigcap_{i=1}^k \left\{ x \in \eR^d : \norm{x} = \norm{x-z_i} \right\}. $$

Let $M_{k,\eps}$ denote the number of $(d-k)$-faces of $\Vtyp$ that 
are defined by a $k$-set $\{z_1,\dots,z_k\} \subseteq \Zcal$ 
that forms an $\eps$-near regular simplex.
The following observation tells us that ``almost all'' faces of constant co-dimension 
are defined by near-regular simplices.

\begin{proposition}\label{prop:Nkeps}
For $k \in \eN$ and $\eps>0$ fixed and any $\lambda=\lambda(d)>0$ we have that 

$$ \frac{M_{k,\eps}}{f_{d-k}(\Vtyp)} \xrightarrow[d\to\infty]{\Pee} 1. $$

\end{proposition}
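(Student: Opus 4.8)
The plan is to show that the expected number of $(d-k)$-faces of $\Vtyp$ which are \emph{not} defined by a near-regular simplex grows at an exponential rate strictly below that of $f_{d-k}(\Vtyp)$, and then to convert this into the asserted convergence in probability via Theorem~\ref{thm:fass} and Markov's inequality. Let $N$ denote the number of $F\in\Fcal_{d-k}(\Vtyp)$ defined by a $k$-set $\{z_1,\dots,z_k\}\subseteq\Zcal$ that does \emph{not} form an $\eps$-near regular simplex. Since, almost surely, each $(d-k)$-face of $\Vtyp$ is defined by a unique $k$-subset of $\Zcal$, we have $f_{d-k}(\Vtyp)=M_{k,\eps}+N$, so $M_{k,\eps}/f_{d-k}(\Vtyp)=1-N/f_{d-k}(\Vtyp)$ and it suffices to show $N/f_{d-k}(\Vtyp)\to 0$ in probability. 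By the multivariate Mecke formula,
\[
\Ee N \;=\; \frac{\lambda^k}{k!}\int_{(\eR^d)^k}\mathbf{1}\bigl\{\{z_1,\dots,z_k\}\text{ does not form an }\eps\text{-near regular simplex}\bigr\}\cdot p(z_1,\dots,z_k)\,\dd z_1\cdots\dd z_k,
\]
where $p(z_1,\dots,z_k)$ is the probability that $\{z_1,\dots,z_k\}$ defines a $(d-k)$-face of the cell of $\orig$ in $\Zcal\cup\{\orig,z_1,\dots,z_k\}$. It is enough to prove $\limsup_{d\to\infty}(\Ee N)^{1/d}<c$, where $c:=(k+1)^{(k+1)/2}/k^{k/2}$ is the constant of Theorem~\ref{thm:fass}.

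To estimate $\Ee N$ I would reuse the Laplace-type analysis underlying the proof of Theorem~\ref{thm:fass}. There one shows that, up to factors of lower exponential order in $d$ and uniformly in the configuration, $p(z_1,\dots,z_k)$ behaves like $\exp(-\lambda R^d\vol(B))$, where $R=R(z_1,\dots,z_k)$ is the circumradius of $\Delta:=\conv\{\orig,z_1,\dots,z_k\}$ --- the governing event being that no point of $\Zcal$ lies in the ball of radius $R$ about the point of the defining flat closest to $\orig$ (i.e.\ the circumcentre of $\{\orig,z_1,\dots,z_k\}$). Both $p$ and the ``not $\eps$-near regular'' indicator depend on $z_1,\dots,z_k$ only through the position of $\Delta$ inside the $k$-plane $L:=\linhull\{z_1,\dots,z_k\}$, so the affine Blaschke--Petkantschin formula recasts the integral as an integral over $k$-dimensional subspaces $L$ (a factor of lower exponential order for fixed $k$) times an integral over the positions of the $z_i$ inside $L\cong\eR^k$ carrying the Jacobian $(k!\,\vol_k(\Delta))^{d-k}$. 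Integrating out the overall scale of $\Delta$ against $\exp(-\lambda R^d\vol(B))$ leaves, to exponential order, a shape integral $\int (k!\,\vol_k(\Delta))^{d-k}\,\dd(\text{shape})$ over simplices $\{\orig,z_1,\dots,z_k\}$ rescaled to circumradius one, with the indicator restricting the shape to the closed set $\Gscr$ of such shapes that are not $\eps$-near regular. Laplace's method then yields
\[
\limsup_{d\to\infty}(\Ee N)^{1/d}\;\le\;c'\;:=\;\sup_{\Delta\in\Gscr}\,k!\,\vol_k(\Delta).
\]

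The geometric heart of the matter is the strict inequality $c'<c$. By definition $c$ is $k!$ times the maximal $k$-volume of a $k$-simplex inscribed in $S^{k-1}$, and this maximum is attained \emph{only} (up to isometry) by a regular simplex, so $c'\le c$. Suppose $c'=c$ and pick shapes $\Delta_n\in\Gscr$ with $k!\,\vol_k(\Delta_n)\to c$. Eventually $\vol_k(\Delta_n)\ge c/(2\cdot k!)>0$, so the $\Delta_n$ lie in a compact family of inscribed simplices (their vertex sets are $(k+1)$-tuples of points on the unit sphere $S^{k-1}$, and the volume lower bound forbids degeneration); along a subsequence $\Delta_n\to\Delta_\ast$ with $\Delta_\ast$ inscribed, non-degenerate, and of maximal volume, hence regular. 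But $\Gscr$ is closed, so $\Delta_\ast\in\Gscr$; yet a regular simplex of circumradius one has all edge lengths exactly $\ell_k$ and is therefore trivially $\eps$-near regular, so $\Delta_\ast\notin\Gscr$ --- a contradiction. Hence $c'<c$. To conclude, fix $\delta\in(0,(c-c')/2)$, so that $c'<c-\delta$. Markov's inequality and the bound on $\Ee N$ give, for every fixed $\eta>0$, $\Pee\bigl(N>\eta\,(c-\delta)^d\bigr)\le\eta^{-1}(c-\delta)^{-d}\,\Ee N\to 0$, while Theorem~\ref{thm:fass} gives $\Pee\bigl(f_{d-k}(\Vtyp)\ge(c-\delta)^d\bigr)\to 1$. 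On the intersection of these two events $N/f_{d-k}(\Vtyp)\le\eta$, so $N/f_{d-k}(\Vtyp)\to 0$ and hence $M_{k,\eps}/f_{d-k}(\Vtyp)\to 1$ in probability.

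The main obstacle is the Laplace step. One must control, uniformly over the $kd$-dimensional configuration space, the lower-order discrepancy between $p(z_1,\dots,z_k)$ and $\exp(-\lambda R^d\vol(B))$, the Blaschke--Petkantschin Jacobian, the Grassmannian integral and the scale integration, and in particular one must rule out on the exponential scale the contributions of configurations with very large or very small circumradius and of near-degenerate simplices $\Delta$, none of which is removed by the ``not $\eps$-near regular'' indicator. The bulk of this machinery is already in place for the proof of Theorem~\ref{thm:fass}; what is genuinely new here is the compactness-and-uniqueness argument giving $c'<c$, together with the bookkeeping needed to push the shape constraint through the change of variables.
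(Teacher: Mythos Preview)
Your overall strategy---show $\limsup_d (\Ee N)^{1/d} < v_k$ and finish via Markov and Theorem~\ref{thm:fass}---is exactly the paper's, and your compactness/uniqueness argument for $c'<c$ is essentially the content of Lemma~\ref{lem:zevenendertig}. The difference lies in how $\Ee N$ is bounded, and here the paper takes a substantially simpler route that bypasses precisely what you flag as ``the main obstacle''.

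Rather than keep the face-defining probability $p(z_1,\dots,z_k)$ in the integral and control it by a Laplace analysis, the paper simply \emph{drops} the requirement that the $k$-tuple define a face and replaces it by the deterministic constraint $\rho(z_1,\dots,z_k)\le 1+\delta$. This only enlarges the count, but now the Mecke integral has no $p$ in it and can be bounded directly via Blaschke--Petkantschin and Lemma~\ref{lem:zevenendertig}, yielding $\Ee N \le (c+o_d(1))^d$ with $c<v_k$. What makes this enlarged count still useful is the earlier observation (from Lemma~\ref{lem:vertub}) that, with probability $1-o_d(1)$, \emph{every} $(d-k)$-face of $\Vtyp$ is defined by a tuple with $\rho\le 1+\delta$; hence $f_{d-k}(\Vtyp)-M_{k,\eps}\le N$ holds with high probability, and the rest is as you wrote. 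So the uniform control of $p$, the tail contributions, and the scale integration that you identify as the hard part are all avoided.

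One further point: your passage ``integrating out the overall scale \dots\ leaves a shape integral over simplices rescaled to circumradius one, with the indicator restricting to $\Gscr$'' is not quite right as stated, because the ``not $\eps$-near regular'' condition is \emph{not} scale-invariant---rescaling changes all the edge lengths. The paper handles this in Lemma~\ref{lem:zevenendertig} by fixing the rescaling factor to $1+\delta$ and checking by hand that a non-$\eps$-near-regular tuple with $\rho\le 1+\delta$ becomes, after dividing by $1+\delta$, a tuple with $g\ge \eps^2/4$ (in the notation there), so that the volume bound $D\le c<v_k$ applies. Your Laplace picture would need an analogous argument localising the scale near $R\approx 1$ before the shape constraint can be read off cleanly.
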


\section{Notation and preliminaries\label{sec:prelim}}

We will use $B(x,r) := \{ y \in\eR^d : \norm{y-x} < r\}$ to denote the open $d$-dimensional ball with center $x$ and
radius $r$.
We will use $\kappa_d$ to denote the $d$-dimensional volume of the $d$-dimensional unit ball. That is,

\begin{equation}\label{eq:volball} 
\kappa_d := \vol( B ) = \frac{\pi^{d/2}}{\Gamma(d/2+1)},  
\end{equation}

\noindent
where the stated equality is a classical result (see e.g.~\cite{Schilling}, Corollary 15.15, for a proof).

We briefly recall the definitions of the main parameters we'll be studying in the present paper.
The {\em diameter} of a set $A \subseteq \eR^d$ is

$$ \diam(A) := \sup_{a,b\in A} \norm{a-b}. $$

\noindent
The {\em inradius} and {\em outradius} are defined respectively by

$$ \begin{array}{l} 
\inr(A) := \sup \{ r > 0 : \text{$\exists p \in \eR^d$ such that $B(p,r) \subseteq A$}\}, \\
\outr(A) := \inf \{ r > 0 : \text{$\exists p \in \eR^d$ such that $B(p,r) \supseteq A$}\}.    
\end{array} $$

\noindent
We remark that (in the case when $A$ is the typical cell of 
a Poisson-Voronoi tessellation) when defining the inradius, respectively outradius, 
some authors (e.g.~\cite{CalkaChevanier}) take the $\sup$, respectively $\inf$, 
of the radius of all balls centered on the origin $\orig$ that are contained in $A$, respectively contain $A$.

We denote by $S^{d-1} \subseteq \eR^d$ the $(d-1)$-dimensional unit sphere (in ambient $d$-dimensional space).
The {\em width in the direction $u \in S^{d-1}$} is defined by

$$ w(u,A) := \sup_{a\in A} u^t a - \inf_{a\in A} u^t a. $$

\noindent
Here and in the rest of the paper $v^t w$ denotes the inner product of the vectors $v,w \in \eR^d$.
The (ordinary) {\em width} is 

$$ \width(A) := \inf_{u\in S^{d-1}} w(u,A). $$

\noindent
The {\em mean width}, as its name suggests, is the average of $w(u,A)$ over all $u \in S^{d-1}$. 
One of several equivalent ways to define this more formally is by setting

$$ \meanw(A) := \Ee w(U,A), $$

\noindent
where $U$ is point chosen uniformly at random on $S^{d-1}$.

For $x \in \eR^d \setminus \{\orig\}$ we will denote by 

\begin{equation}\label{eq:Hxdef} 
H_x := \{ y \in \eR^d : x^t y \leq x^tx/2  \},, 
\end{equation}

\noindent
for the set of all points that are at least as close to the origin $\orig$ as they are to $x$.

We note that the typical cell of the Poisson-Voronoi tessellation can be written as
\begin{equation}\label{eq:altCdef} 
\Vtyp = \bigcap_{z\in\Zcal} H_z, 
\end{equation}

\noindent
where $\Zcal$ denotes the Poisson point process that generates the typical cell $\Vtyp$.

We'll make use of the following bound from the literature. 
The following is part (b) of Lemma 2.1 in~\cite{BriedenEtal}, rephrased in terms of angles rather than 
spherical caps.
The notation $\angle abc$ denotes the angle between the vectors $a-b$ and $c-b$.

\begin{lemma}[\cite{BriedenEtal}]\label{lem:X}

 Let $U$ be chosen uniformly at random on the unit sphere $S^{d-1}$ in $\eR^d$.
 For any $v \in \eR^d \setminus \{\orig\}$ and $0< \alpha < \arccos(\sqrt{2/d})$ we have
 
 $$ \frac{\sin^{d-1}\alpha}{6\cdot\cos\alpha\cdot\sqrt{d}} \leq
 \Pee( \angle U{\orig}v < \alpha ) \leq \frac{\sin^{d-1}\alpha}{2\cdot \cos\alpha\cdot\sqrt{d}}. $$
 
 \end{lemma}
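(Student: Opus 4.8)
\medskip
\noindent\textbf{Proof plan for Lemma~\ref{lem:X}.}

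The plan is the classical spherical-cap estimate. First I would use rotational invariance: since the law of $U$ is invariant under orthogonal transformations, $\Pee(\angle U\orig v<\alpha)$ depends on $v$ only through its direction, so we may take $v=e_1$, the first standard basis vector. Then $\angle U\orig v$ is the angle between $U$ and $e_1$, namely $\arccos(U_1)$ with $U_1$ the first coordinate of $U$; hence $\{\angle U\orig v<\alpha\}=\{U_1>\cos\alpha\}$, whose probability is the normalised surface area of the spherical cap $\{x\in S^{d-1}:x_1>\cos\alpha\}$. Slicing the cap by the hyperplanes $\{x_1=\cos\theta\}$, $0\le\theta\le\alpha$ --- whose trace on $S^{d-1}$ is a $(d-2)$-sphere of radius $\sin\theta$ --- one obtains
$$
\Pee(\angle U\orig v<\alpha)=\frac{\omega_{d-1}}{\omega_d}\int_0^\alpha\sin^{d-2}\theta\,\dd\theta,
$$
where $\omega_j:=2\pi^{j/2}/\Gamma(j/2)$ is the surface area of $S^{j-1}$. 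It then remains to estimate the integral and the prefactor separately.

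For the integral I would use the elementary identity $\frac{\dd}{\dd\theta}\big(\tfrac{\sin^{d-1}\theta}{\cos\theta}\big)=(d-1+\tan^2\theta)\sin^{d-2}\theta$. The hypothesis $\alpha<\arccos\sqrt{2/d}$ says exactly that $\cos^2\alpha>2/d$, whence $\tan^2\theta\le\tan^2\alpha<\tfrac d2-1$ for all $\theta\in[0,\alpha]$, so that $d-1\le d-1+\tan^2\theta<\tfrac{3d}{2}$ on $[0,\alpha]$; integrating the identity from $0$ to $\alpha$ (the boundary term at $\theta=0$ vanishes) therefore yields
$$
\frac{2}{3d}\cdot\frac{\sin^{d-1}\alpha}{\cos\alpha}\ \le\ \int_0^\alpha\sin^{d-2}\theta\,\dd\theta\ \le\ \frac{1}{d-1}\cdot\frac{\sin^{d-1}\alpha}{\cos\alpha}.
$$
For the prefactor, $\frac{\omega_{d-1}}{\omega_d}=\frac{\Gamma(d/2)}{\sqrt\pi\,\Gamma((d-1)/2)}$, and the recursion $\omega_j=\tfrac{2\pi}{j-2}\,\omega_{j-2}$ together with the log-concavity of $j\mapsto\omega_j$ gives $\sqrt{\tfrac{d-2}{2\pi}}\le\frac{\omega_{d-1}}{\omega_d}\le\sqrt{\tfrac{d-1}{2\pi}}$. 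Multiplying the two displays and using the explicit elementary facts $\sqrt{\tfrac{d-2}{2\pi}}\cdot\tfrac{2}{3d}\ge\tfrac1{6\sqrt d}$ and $\tfrac1{\sqrt{2\pi(d-1)}}\le\tfrac1{2\sqrt d}$ --- both valid for $d\ge4$, with the sole remaining admissible value $d=3$ checked directly --- gives the two claimed inequalities.

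There is no serious obstacle here --- this is a routine computation --- but two points call for a little care. One is the choice of the function to differentiate in the integral estimate: the identity above is what makes the hypothesis $\alpha<\arccos\sqrt{2/d}$ do its work, trapping $d-1+\tan^2\theta$ between $d-1$ and $\tfrac32 d$, which is exactly what is needed to obtain matching upper and lower bounds of the form $\sin^{d-1}\alpha/(\cos\alpha\cdot\Theta(d))$. The other is bounding $\omega_{d-1}/\omega_d$ by \emph{explicit} constants --- not merely by the asymptotics $\sim\sqrt{d/(2\pi)}$ --- so that the final inequalities hold for every admissible $d$ rather than only as $d\to\infty$.
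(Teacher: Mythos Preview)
Your argument is correct and complete: the spherical-cap integral formula, the derivative identity for $\sin^{d-1}\theta/\cos\theta$, the two-sided bound on $d-1+\tan^2\theta$ under the hypothesis $\cos^2\alpha>2/d$, and the log-concavity bound $\sqrt{(d-2)/(2\pi)}\le\omega_{d-1}/\omega_d\le\sqrt{(d-1)/(2\pi)}$ all check out, as do the small numerical verifications for $d=3,4$ (and for $d\le 2$ the range of admissible $\alpha$ is empty).

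The paper, however, does not prove this lemma at all --- it simply quotes it as part (b) of Lemma~2.1 of~\cite{BriedenEtal}, rephrased in terms of angles. So your proposal is not a comparison target in the usual sense: you are supplying a self-contained proof where the paper outsources to the literature. What this buys you is independence from the cited source; what the paper's approach buys is brevity. Your derivation is in fact essentially the same as the one in~\cite{BriedenEtal}, so there is no methodological divergence to discuss.
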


We'll make use of the following incarnation of the Chernoff bound.
A proof can for instance be found in~\cite{Penroseboek} (Lemma 1.2). 
We use $\Po(\mu)$ to denote the Poisson distribution with mean $\mu$.

\begin{lemma}[Chernoff bound]\label{lem:chernoff}
Let $X \isd \Po(\mu)$ and $x \geq \mu$. Then 

$$ \Pee( X \geq x ) \leq e^{-\mu H(x/\mu)}, $$
 
\noindent
where $H(z) := z\ln z-z+1$.
\end{lemma}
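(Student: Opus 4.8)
Write $c_k := (k+1)^{(k+1)/2}/k^{k/2}$ for the constant in Theorem~\ref{thm:fass}, and put $N := f_{d-k}(\Vtyp)-M_{k,\eps}$, the number of $(d-k)$-faces of $\Vtyp$ that are defined by a $k$-set $\{z_1,\dots,z_k\}\subseteq\Zcal$ which is \emph{not} an $\eps$-near regular simplex. Then $M_{k,\eps}/f_{d-k}(\Vtyp) = 1 - N/f_{d-k}(\Vtyp)$, so it suffices to show $N/f_{d-k}(\Vtyp)\to 0$ in probability. By Theorem~\ref{thm:fass}, for every fixed $\eta>0$ we have $f_{d-k}(\Vtyp)\ge (c_k-\eta)^d$ with probability $1-o_d(1)$. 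Hence the whole proposition follows once we establish
\begin{equation}\label{eq:Nkeps-goal}
\Ee[N]\ \le\ (c_k-\delta)^d\quad\text{for some fixed }\delta=\delta(k,\eps)>0\text{ and all }d\text{ large}.
\end{equation}
Indeed, granting \eqref{eq:Nkeps-goal}, Markov's inequality gives $N\le (c_k-\tfrac{\delta}{2})^d$ with probability $1-o_d(1)$, and choosing $\eta=\delta/4$ we obtain, on the intersection of the two high-probability events, $N/f_{d-k}(\Vtyp)\le\big((c_k-\tfrac{\delta}{2})/(c_k-\tfrac{\delta}{4})\big)^d\to 0$.

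\textbf{Step 2: an integral formula for $\Ee[N]$.} Generically a $(d-k)$-face of $\Vtyp$ is described by a $k$-set $\{z_1,\dots,z_k\}\subseteq\Zcal$: it equals $\Vtyp\cap L(z_1,\dots,z_k)$, where $L(z_1,\dots,z_k):=\bigcap_{i=1}^k\{x:\norm{x}=\norm{x-z_i}\}$ is a $(d-k)$-flat, and it is a genuine $(d-k)$-face precisely when $\{x\in L(z_1,\dots,z_k):\inter B(x,\norm{x})\cap\Zcal=\emptyset\}$ is $(d-k)$-dimensional. The multivariate Mecke equation for $\Zcal$ therefore gives
\begin{equation}\label{eq:Nkeps-mecke}
\Ee[N]\ =\ \frac{\lambda^k}{k!}\int_{(\eR^d)^k}\mathbf 1\{z_1,\dots,z_k\text{ is not }\eps\text{-near regular}\}\,g(z_1,\dots,z_k)\,\dd z_1\cdots\dd z_k,
\end{equation}
with $g(z_1,\dots,z_k):=\Pee\big(\{x\in L(z_1,\dots,z_k):\inter B(x,\norm{x})\cap\Zcal=\emptyset\}\text{ is }(d-k)\text{-dimensional}\big)$. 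Without the extra indicator this is exactly the integral computing $\Ee[f_{d-k}(\Vtyp)]$ that underlies Theorem~\ref{thm:fass}, whose value is $(c_k+o_d(1))^d$. Applying an affine Blaschke--Petkantschin substitution, one rewrites \eqref{eq:Nkeps-mecke} as an integral over the $k$-dimensional linear span $E$ of $z_1,\dots,z_k$, the positions of the $z_i$ inside $E$, and the orthogonal complement $E^{\perp}$ (in which $L(z_1,\dots,z_k)$ is a translate); the Jacobian is a power of order $d$ of $\vol_k(\conv(\orig,z_1,\dots,z_k))$, and $g$ is controlled --- up to a factor polynomial in $d$ arising from the union over $x\in L(z_1,\dots,z_k)$ --- by void probabilities of the form $\exp(-\lambda\kappa_d\rho^d)$, where $\rho=\rho(z_1,\dots,z_k)$ is the circumradius of $\{\orig,z_1,\dots,z_k\}$.

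\textbf{Step 3: Laplace-type analysis; the regular simplex as maximizer.} After quotienting the configuration $(z_1,\dots,z_k)$ by isometries and then by overall scale, the integrand in \eqref{eq:Nkeps-mecke} takes the form $\exp\!\big(d\,\Phi_d(\xi)+o(d)\big)$, where $\xi$ ranges over a finite-dimensional ``shape space'' and $\Phi_d\to\Phi$ with $\Phi$ continuous; $\Phi$ encodes the competition between the volume factor (which favours ``round'' simplices) and the void/normalization factor (which pins down the circumradius). The analysis behind Theorem~\ref{thm:fass} shows $\sup_\xi\Phi(\xi)=\ln c_k$, attained up to isometry precisely at the \emph{regular} $k$-simplex --- this is the extremal fact that the regular simplex maximizes $k$-volume among $k$-simplices inscribed in a fixed sphere, which is exactly why $c_k$ equals $k!$ times the $k$-volume of a regular $k$-simplex inscribed in $S^{k-1}$. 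The shapes that are not $\eps$-near regular form a closed set staying a definite distance away from this unique maximizer, so by continuity and compactness of the portion of shape space that carries essentially all the mass (see Step~4) one gets $\sup\{\Phi(\xi):\xi\text{ not }\eps\text{-near regular}\}\le\ln c_k-2\delta$ for some fixed $\delta=\delta(k,\eps)>0$. Substituting into \eqref{eq:Nkeps-mecke} gives $\Ee[N]\le e^{o(d)}(c_k-\delta)^d\le(c_k-\tfrac{\delta}{2})^d$ for $d$ large, which is \eqref{eq:Nkeps-goal} (after renaming $\delta$).

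\textbf{Step 4: the main obstacle.} The crux is the quantitative maximality of $\Phi$ used in Step~3: that the regular simplex is the unique maximizer (up to isometry) and that $\Phi$ decreases by a fixed amount $\delta(\eps)$ once one moves $\eps$ away from it --- a stability version of ``the regular simplex uniquely maximizes volume among simplices inscribed in a sphere''. This in turn demands taming the non-compact directions of the integral, which is the bulk of the remaining (routine) work: (a) nearly degenerate simplices, where $\vol_k(\conv(\orig,z_1,\dots,z_k))$ is tiny and the contribution is negligible; (b) configurations whose circumradius $\rho$ greatly exceeds the critical value, where $\exp(-\lambda\kappa_d\rho^d)$ is super-exponentially small; (c) configurations with $\rho$ far below the critical value, where the integration over the $E^{\perp}$-directions contributes only a tiny factor; and (d) checking that the polynomial-in-$d$ corrections (from the union over $x\in L(z_1,\dots,z_k)$ in $g$, and from the Stirling factors inside $\kappa_d$) are genuinely $e^{o(d)}$ and hence harmless. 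Finally, one matches the definition of an ``$\eps$-near regular simplex'' (phrased via the edge length $\ell_k$) with the actual scale at which the Poisson points sit; since the passage to shape space divides out the overall scale, this last point is only bookkeeping.
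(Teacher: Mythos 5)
The statement you were asked to prove is the Chernoff bound for a Poisson random variable: for $X \isd \Po(\mu)$ and $x \geq \mu$, one has $\Pee(X \geq x) \leq e^{-\mu H(x/\mu)}$ with $H(z) = z\ln z - z + 1$. Your proposal never touches this statement: it is a sketch of an argument for Proposition~\ref{prop:Nkeps} (that almost all faces of co-dimension $k$ are defined by $\eps$-near regular simplices), built around $M_{k,\eps}$, $f_{d-k}(\Vtyp)$, the multivariate Mecke equation and a Blaschke--Petkantschin/Laplace analysis. As a proof of the stated lemma it is therefore entirely off target; no part of it bounds a Poisson tail probability.

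For comparison: the paper does not prove the lemma either, it simply cites Lemma 1.2 of Penrose's book, and the standard argument is a one-line exponential Markov bound. For $t \geq 0$, $\Pee(X \geq x) \leq e^{-tx}\,\Ee e^{tX} = \exp\left(-tx + \mu(e^t - 1)\right)$; choosing $t = \ln(x/\mu) \geq 0$ (the minimizer, admissible since $x \geq \mu$) gives $\exp\left(-x\ln(x/\mu) + x - \mu\right) = e^{-\mu H(x/\mu)}$. Incidentally, even judged as an attempt at Proposition~\ref{prop:Nkeps}, your Step 4 concedes rather than proves the crucial quantitative stability estimate (that configurations $\eps$-far from regular lose a fixed exponential factor); in the paper this is exactly what Lemma~\ref{lem:zevenendertig} delivers, via a compactness argument on the set of non-near-regular configurations with circumradius at most $1$ together with the uniqueness statement in Lemma~\ref{lem:det}.
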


In the proofs below, we will repeatedly make use of the dissection $\Qcal_\delta$ of $\eR^d$ into axis parallel cubes of side length 
$\delta/\sqrt{d}$ (and hence diameter $\delta$) in the obvious way, where $\delta>0$ will be a constant chosen 
independently of the dimension.
In more detail:

$$ \Qcal_\delta := \left\{ 
\left[\frac{i_1\delta}{\sqrt{d}}, \frac{(i_1+1)\delta}{\sqrt{d}}\right) \times \cdots \times
\left[\frac{i_d\delta}{\sqrt{d}}, \frac{(i_d+1)\delta}{\sqrt{d}}\right) :
i_1,\dots, i_d \in \Zed \right\}. $$

\noindent
We point out that if 

$$ \Qcal_{\delta,R} := \left\{ q \in \Qcal_\delta : q \subseteq B(\orig,R) \right\}, $$

\noindent 
and 

\begin{equation}\label{eq:Ndef}
 N_{\delta,R} := \left| \Qcal_{\delta,R}\right|, 
\end{equation}

\noindent
is the number of cubes in the dissection that are contained in the ball of radius $R$ around the origin, then 

\begin{equation}\label{eq:Nub} 
N_{\delta,R} \leq \frac{\vol(B(\orig,R))}{(\delta/\sqrt{d})^d}
 = \frac{\kappa_d R^d}{(\delta/\sqrt{d})^d} = (R/\delta)^d \cdot \exp[  O(d) ],
\end{equation}

\noindent
where the asymptotics is for $d \to \infty$ and the term $O(d)$ 
depends only on $d$ and not $R$ or $\delta$. (Here we've used that 
$\kappa_d = d^{-d/2} \cdot \exp\left[  O(d) \right]$, as can for instance be seen
from Stirling's approximation to the Gamma function and the exact expression~\eqref{eq:volball} for $\kappa_d$ above.)

\section{Proofs}

We recall that a dilation of an intensity $\lambda$ Poisson point process on $\eR^d$ by a factor of
$\rho>0$ yields a Poisson process of intensity $\rho^{-d}\lambda$ whose typical cell is 
just the typical cell of the original process rescaled by $\rho$.
In particular, for the proofs of our results we can take any value of $\lambda$ 
we like (and the result will follow for all choices $\lambda$).

We find it convenient to take 

$$\lambda := 1/\kappa_d.$$ 

\noindent
(This way the numerator in our main results equals one; and also 
the expected number of Poisson points in a ball of radius $s$ is simply $s^d$).
We will be using this choice of $\lambda$ throughout the remainder of the paper, without 
stating it explicitly every time.
We will denote by $\Zcal \subseteq \eR^d$ the Poisson point process of intensity $\lambda = 1/\kappa_d$ on $\eR^d$.

\subsection{All vertices have norm approximately one}

\begin{lemma}\label{lem:vertlb}
For every fixed $\eps>0$, with probability $1-o_d(1)$, all vertices of $\Vtyp$ have norm $>1-\eps$.
\end{lemma}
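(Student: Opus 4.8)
The plan is to bound the probability of the complementary event --- that $\Vtyp$ has some vertex $v$ with $\norm{v}\le 1-\eps$ --- and to show it is $o_d(1)$ (in fact $e^{-\Omega(d^2)}$), by a first-moment/union argument over the dissection $\Qcal_\delta$ with $\delta:=\eps/6$, a constant.

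The heart of the argument is a deterministic geometric observation. Since $\Vtyp=\bigcap_{z\in\Zcal}H_z$ is a.s.\ a bounded polytope, each of its vertices $v$ lies on $\partial H_z$ for at least $d$ distinct points $z\in\Zcal$, and by~\eqref{eq:Hxdef} the condition $v\in\partial H_z$ is precisely $\norm{v-z}=\norm{v}$. So, fixing a vertex $v$ with $\norm{v}\le 1-\eps$, let $q\in\Qcal_\delta$ be the cube containing $v$ and $z_0$ its centre, so $\norm{v-z_0}\le\delta$ (the diameter of $q$) and $\norm{z_0}\le 1-\eps+\delta<1$. Then for each of the $d$ defining points $z$,
$$ \norm{z-z_0}\ \le\ \norm{z-v}+\norm{v-z_0}\ \le\ \norm{v}+\delta\ \le\ (1-\eps)+\tfrac{\eps}{6}\ <\ 1-\tfrac{\eps}{2}. $$
Hence $\abs{\Zcal\cap B(z_0,\,1-\eps/2)}\ge d$, and $B(z_0,1-\eps/2)$ has volume $\kappa_d(1-\eps/2)^d$. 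Note that any cube $q$ containing such a vertex satisfies $q\subseteq B(\orig,1)$, so there are at most $N_{\delta,1}\le (6/\eps)^d\cdot e^{O(d)}$ admissible cubes, by~\eqref{eq:Nub}.

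Next I would assemble the union bound. Since $\lambda=1/\kappa_d$, for each admissible cube the count $\abs{\Zcal\cap B(z_0,1-\eps/2)}$ is $\Po(\mu)$-distributed with $\mu=(1-\eps/2)^d<\tfrac12$ for $d$ large; for such $d$ a standard Poisson tail estimate (or Lemma~\ref{lem:chernoff}) gives $\Pee(\Po(\mu)\ge d)\le \mu^d/\big(d!(1-\mu)\big)\le \tfrac{2}{d!}(1-\eps/2)^{d^2}$. Therefore
$$ \Pee\big(\Vtyp\text{ has a vertex of norm }\le 1-\eps\big)\ \le\ N_{\delta,1}\cdot\frac{2}{d!}(1-\eps/2)^{d^2}\ \le\ \frac{2\,e^{O(d)}}{d!}\,(1-\eps/2)^{d^2}\ \xrightarrow[d\to\infty]{}\ 0, $$
because $(1-\eps/2)^{d^2}=e^{-\Omega(d^2)}$ dominates the surviving $e^{O(d)}/d!\le e^{O(d)}$ factor.

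The one step requiring care is the geometric one: the point is to confine the $d$ defining Poisson points to a ball of radius \emph{strictly below $1$}, i.e.\ of Poisson mean $<1$, which is what makes the event ``$\ge d$ points'' \emph{doubly}-exponentially unlikely and thus able to beat the merely singly-exponentially many choices of cube. It is essential that this ball be centred near $z_0$ rather than near $\orig$: the cruder bound $\norm{z}\le 2\norm{v}\le 2(1-\eps)$ would only place the points in a ball of radius $2(1-\eps)$, whose Poisson mean is $(2(1-\eps))^d\to\infty$ as soon as $\eps<\tfrac12$, and the whole estimate would collapse. The dissection $\Qcal_\delta$ at a small constant scale $\delta$ is exactly the device that lets us pin $v$ down well enough to use the small ball $B(z_0,1-\eps/2)$ while paying only an $e^{O(d)}$ factor for the choice of cube.
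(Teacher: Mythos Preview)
Your proof is correct and follows essentially the same approach as the paper's: both use the fact that a vertex $v$ forces $d$ Poisson points onto $\partial B(v,\norm{v})$, localise $v$ via the dissection $\Qcal_\delta$, and beat the $e^{O(d)}$ cubes with a doubly-exponential Poisson tail on a ball of radius $<1$. The only cosmetic differences are that the paper uses a corner of the cube (you use the centre), takes $\delta<\eps/1000$ (you take $\delta=\eps/6$), and invokes the Chernoff bound of Lemma~\ref{lem:chernoff} (you use a direct tail estimate $\Pee(\Po(\mu)\ge d)\le \mu^d/(d!(1-\mu))$); none of these affect the argument.
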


\begin{proof}
We use the elementary observation that if $p \in \eR^d$ is a vertex of $\Vtyp$ then 
 there are $d$ points $z_1,\dots,z_d \in \Zcal$ of the Poisson process such that 
 $z_1,\dots,z_d \in \partial B(p, \norm{p})$ and in addition $\Zcal \cap B(p, \norm{p}) = \emptyset$.
(This because a vertex of $\Vtyp$ must lie on the common boundary of $d+1$ Voronoi cells, one of which must be $\Vtyp$
itself. This translates to $p$ being  equidistant to $\orig$ and $d$ points of the Poisson point process $\Zcal$, 
and in addition no point of $\Zcal$ can be closer to $p$ than the common distance -- which must be $\norm{p}$.)

We let $\delta=\delta(\eps) < \eps/1000$ be a small constant.
Any vertex $v$ of $\Vtyp$ with norm $\leq 1-\eps$ is contained in one of the cubes of $\Qcal_\delta$ as defined above.
That cube itself is completely contained in 
$B(\orig,1-\eps+\delta)\subseteq B$. Let $c$ be one of the corners of the cube containing $v$.
By the elementary observation above, $B(c,1-\eps+2\delta)$ contains at least $d$ points of the Poisson process.
Hence, if $E$ denotes the event that there exists a vertex of $\Vtyp$ with norm $\leq 1-\eps$ then by the union bound: 

$$ \Pee( E ) \leq N_{\delta,1} \cdot \Pee\left( \Po( (1-\eps+2\delta)^d ) \geq d \right), $$

\noindent
(The first term is an upper bound on the number of cubes considered and the second term is the probability 
the ball around a corner of a given cube has $\geq d$ Poisson points in it.)

By~\eqref{eq:Nub} we have $N_{\delta,1} = e^{O(d)}$.
On the other hand, using the version of the Chernoff bound we presented as Lemma~\ref{lem:chernoff} above, 
we see that

$$ \Pee\left( \Po( (1-\eps+2\delta)^d ) \geq d \right) \leq 
\exp\left[ - (1-\eps+2\delta)^d H\left( \frac{d}{(1-\eps+2\delta)^d}\right) \right]
= e^{-\Omega( d^2 )}. $$

\noindent
Combining these bounds, we see that $\Pee(E) = o_d(1)$, as claimed in the lemma statement.
\end{proof}

\begin{lemma}\label{lem:vertub}
For every fixed $\eps>0$, with probability $1-o_d(1)$, every vertex of $\Vtyp$ has norm $<1+\eps$. 
\end{lemma}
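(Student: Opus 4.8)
\textbf{Proof plan for Lemma~\ref{lem:vertub}.}
The plan is to mirror the structure of the proof of Lemma~\ref{lem:vertlb}, but now working in the \emph{opposite} direction: instead of showing that no ball of radius slightly below $1$ can be empty around a lattice corner, we show that no vertex can sit at distance $\geq 1+\eps$ from the origin, because the defining emptiness condition would force an unreasonably large empty ball. The key elementary observation is the same one used above: if $p$ is a vertex of $\Vtyp$, then $\Zcal \cap B(p,\norm{p}) = \emptyset$, i.e.\ the open ball of radius $\norm{p}$ centered at $p$ contains no point of the Poisson process. This is now a statement about \emph{large} empty balls, which is exponentially unlikely since such a ball has expected number of Poisson points equal to $\norm{p}^d$.

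\textbf{Main steps.} First I would fix $\eps>0$ and a small constant $\delta = \delta(\eps) < \eps/1000$. Unlike in Lemma~\ref{lem:vertlb}, here there is no a~priori upper bound on $\norm{p}$, so a single application of the cube dissection $\Qcal_\delta$ over a fixed ball will not suffice. I would handle this in two regimes. For the ``bounded'' regime, say vertices with $1+\eps \leq \norm{p} \leq R$ for a suitably large constant $R$ (e.g.\ $R = 10$): any such vertex lies in some cube $q\in\Qcal_{\delta,R}$; picking a corner $c$ of that cube, the ball $B(c,\norm{p}-\delta) \supseteq B(c, 1+\eps-\delta)$ is contained in $B(p,\norm{p})$ and hence is $\Zcal$-empty. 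By the union bound,
\[
\Pee\bigl(\exists\text{ vertex } p \text{ with } 1+\eps\leq\norm{p}\leq R\bigr)
\leq N_{\delta,R}\cdot\Pee\bigl(\Po((1+\eps-\delta)^d)=0\bigr).
\]
Now $N_{\delta,R} = (R/\delta)^d e^{O(d)} = e^{O(d)}$ by~\eqref{eq:Nub}, while $\Pee(\Po((1+\eps-\delta)^d)=0) = \exp[-(1+\eps-\delta)^d]$, which is $\exp[-\Omega((1+\eps/2)^d)]$; this beats $e^{O(d)}$ since $(1+\eps/2)^d$ grows faster than any linear function of $d$. So this contribution is $o_d(1)$.

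\textbf{The far regime and main obstacle.} The step I expect to be the real obstacle is ruling out vertices with $\norm{p} > R$, since there are infinitely many cubes to consider and a naive union bound diverges. The plan is to cover the annulus $\{\,x : 2^j \leq \norm{x} < 2^{j+1}\,\}$ for $j\geq j_0 := \lfloor \log_2 R\rfloor$ by cubes of $\Qcal_\delta$: there are at most $\kappa_d (2^{j+1})^d / (\delta/\sqrt d)^d = (2^{j+1}/\delta)^d e^{O(d)}$ such cubes. A vertex $p$ in this annulus yields, via a corner $c$ of its cube, a $\Zcal$-empty ball of radius at least $2^j - \delta \geq 2^{j-1}$, whose expected Poisson count is $(2^{j-1})^d$. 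Thus
\[
\Pee\bigl(\exists\text{ vertex } p \text{ with } 2^j\leq\norm{p}<2^{j+1}\bigr)
\leq (2^{j+1}/\delta)^d e^{O(d)}\cdot \exp\bigl[-(2^{j-1})^d\bigr].
\]
For fixed $j$ this is clearly $o_d(1)$ (the doubly-exponential term wins), but to sum over $j\geq j_0$ I need the bound to be summable. Taking logarithms, the $j$-th term's log is at most $d\log(2^{j+1}/\delta) + O(d) - (2^{j-1})^d$; since $(2^{j-1})^d$ dominates $d\cdot j$ for all large $d$ (indeed once $2^{d-1}\geq$ some absolute constant, say $d\geq 3$, the term $(2^{j-1})^d \geq 2^{(j-1)d}$ grows faster in $j$ than the linear-in-$j$ competitor $dj$), the terms decay geometrically in $j$ and $\sum_{j\geq j_0}$ of them is still $o_d(1)$ — in fact it is bounded by a constant times its first term. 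Assembling the bounded regime, the far regime, and invoking Lemma~\ref{lem:vertlb} is not needed here; the union of the two estimates gives $\Pee(\exists\text{ vertex } p \text{ with }\norm{p}\geq 1+\eps) = o_d(1)$, completing the proof. The one point requiring mild care in the writeup is making the $O(d)$ constants in~\eqref{eq:Nub} uniform in $j$ (they are, since that constant depends only on $d$), so that the geometric-decay-in-$j$ argument is legitimate.
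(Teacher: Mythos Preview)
Your proposal is correct and follows essentially the same approach as the paper: the key observation that a vertex $p$ forces $\Zcal\cap B(p,\norm{p})=\emptyset$, a cube dissection $\Qcal_\delta$, and a union bound over annuli of increasing radius to handle the unbounded range of $\norm{p}$. The only cosmetic difference is that the paper uses integer annuli $B(\orig,n+1)\setminus B(\orig,n)$ (with the case $n=1$ replaced by $B(\orig,2)\setminus B(\orig,1+\eps)$) rather than your dyadic annuli, and writes the summation in one line without separating a ``bounded'' from a ``far'' regime.
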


\begin{proof}
 As in the proof of the previous lemma, we use that if $p \in \eR^d$ is a vertex of $\Vtyp$ then 
 there are $d$ points $z_1,\dots,z_d \in \Zcal$ of the Poisson process such that 
 $z_1,\dots,z_d \in \partial B(p, \norm{p})$ and in addition $\Zcal \cap B(p, \norm{p}) = \emptyset$.
 
 We fix some $\delta=\delta(\eps) \in (0,\min(\eps/2,1))$.
 If $p \in B(\orig,2) \setminus B(\orig,1+\eps)$ is a vertex of $\Vtyp$ then 
 there is some cube $q \in \Qcal_\delta$ satisfying that $q \subseteq B(\orig,3)$ and 
 $B(c,1+\eps-\delta) \cap \Zcal = \emptyset$ for $c$ any corner of $q$.
 Similarly, if for some $n\geq 2$ a point $p \in B(\orig,n+1) \setminus B(\orig,n)$ is 
 a vertex of $\Vtyp$ then 
 there is some cube $q \in \Qcal_\delta$ satisfying that $q \subseteq B(\orig,n+2)$ and 
 $B(c,n-\delta) \cap \Zcal = \emptyset$ for $c$ any corner of $q$.
 
 It follows that if $E$ is the event that $\Vtyp$ has at least one vertex outside $B(\orig,1+\eps)$ then 
 
 $$ \begin{array}{rcl} \Pee(E) & \leq & N_{\delta,3} \cdot \Pee( \Po( (1+\eps-\delta)^d ) = 0 ) 
 + \sum_{n\geq 2} N_{\delta,n+2} \cdot \Pee( \Po( (n-\delta)^d ) = 0 ) \\
& = & e^{O(d) - (1+\eps-\delta)^d} 
+ \exp[ O(d) ] \cdot \sum_{n\geq 2} (n+2)^d \cdot  e^{-(n-\delta)^d} \\
& = & o_d(1),
\end{array} $$

\noindent
using~\eqref{eq:Nub} above.
\end{proof}

\subsection{The outradius, diameter and mean width}

Since $\Vtyp$ is the convex hull of its vertices, Lemma~\ref{lem:vertub} implies that 
$\Vtyp \subseteq B(\orig,1+\eps)$ (for any fixed $\eps>0$, with probability $1-o_d(1)$). 
In particular:

\begin{corollary}\label{cor:twaalf}
For every $\eps>0$, with probability $1-o_d(1)$, we have $\meanw(\Vtyp)\leq \diam(\Vtyp) < 2+\eps$ and $\outr(\Vtyp) < 1+\eps$.
\end{corollary}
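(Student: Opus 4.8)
The plan is to deduce all three inequalities directly from Lemma~\ref{lem:vertub} together with the elementary facts that $\Vtyp$ is almost surely the convex hull of its finitely many vertices and that balls are convex. First I would fix $\eps>0$ and apply Lemma~\ref{lem:vertub} with $\eps/4$ in place of $\eps$: with probability $1-o_d(1)$ every vertex $v$ of $\Vtyp$ satisfies $\norm{v}<1+\eps/4$. Work on this event (which also guarantees $\Vtyp$ is bounded, so that all the quantities below are finite). Since the closed ball $\cl{B}(\orig,1+\eps/4)$ is convex and contains every vertex of the polytope $\Vtyp$, it contains $\conv$ of those vertices, i.e.\ $\Vtyp\subseteq \cl{B}(\orig,1+\eps/4)\subseteq B(\orig,1+\eps)$.

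From the containment $\Vtyp\subseteq \cl{B}(\orig,1+\eps/4)$ the outradius bound is immediate from the definition: the ball $B(\orig,1+\eps)$ (or even $B(\orig,1+\eps/2)$) witnesses $\outr(\Vtyp)\le 1+\eps/2<1+\eps$. For the diameter, any two points of $\cl{B}(\orig,1+\eps/4)$ are at distance at most $2(1+\eps/4)=2+\eps/2$, so $\diam(\Vtyp)\le 2+\eps/2<2+\eps$. Finally, for the mean width I would use that $w(u,\Vtyp)\le\diam(\Vtyp)$ for every $u\in S^{d-1}$: for $a,b\in\Vtyp$ one has $u^t a-u^t b=u^t(a-b)\le\norm{u}\cdot\norm{a-b}\le\diam(\Vtyp)$ by Cauchy--Schwarz, and taking suprema over $a$ and $b$ gives the claim; hence $\meanw(\Vtyp)=\Ee\, w(U,\Vtyp)\le\diam(\Vtyp)<2+\eps$. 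Combining the three estimates yields the statement on the single event of probability $1-o_d(1)$ supplied by Lemma~\ref{lem:vertub}.

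There is essentially no obstacle here: the entire probabilistic content sits in Lemma~\ref{lem:vertub}, and the remaining steps are deterministic convexity facts. The only minor point of care is that one should apply Lemma~\ref{lem:vertub} with a slightly smaller error parameter (here $\eps/4$) so that the resulting inequalities for $\outr$, $\diam$ and $\meanw$ are strict, as stated.
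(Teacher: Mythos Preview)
Your proposal is correct and follows essentially the same approach as the paper: the paper simply notes that since $\Vtyp$ is the convex hull of its vertices, Lemma~\ref{lem:vertub} gives $\Vtyp\subseteq B(\orig,1+\eps)$ with probability $1-o_d(1)$, from which the corollary is immediate. Your version just spells out the elementary deductions (and the $\eps/4$ trick for strictness) in more detail than the paper bothers to.
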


We need to prove matching lower bounds.
The following observation does the trick for the diameter and outradius.

\begin{lemma}\label{lem:2pts}
For every $\eps>0$, with probability $1-o_d(1)$, we have 
$(1-\eps,0,\dots,0) \in \Vtyp$ and $(\eps-1,0,\dots,0) \in \Vtyp$.
\end{lemma}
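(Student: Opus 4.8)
The plan is to show that a fixed point $p := (1-\eps,0,\dots,0)$ (and symmetrically $-p$) lies in $\Vtyp$ with probability $1-o_d(1)$, by showing that the single ``bad'' event $\Zcal \cap B(\orig, \norm{p}) \neq \emptyset$ — or rather the event that $p$ is separated from $\orig$ by some halfspace $H_z$ — has vanishing probability. Recall from \eqref{eq:altCdef} that $p \in \Vtyp$ precisely when $p \in H_z$ for every $z \in \Zcal$, i.e. when $z^t p \leq z^t z / 2$ for all $z \in \Zcal$. A clean sufficient condition is that $\norm{z - p} \geq \norm{p}$ for all $z\in\Zcal$ with $\norm{z}$ not too large, together with a separate tail bound ruling out far-away points being a problem. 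Concretely, $p \notin H_z$ means $z$ lies in the open ball $B'$ of radius $\norm{z}$... more usefully, $p\notin H_z \iff \norm{p-z} < \norm{p}$ is \emph{false} in general; instead $p \notin H_z \iff z^tp > z^tz/2 \iff \norm{z - p}^2 < \norm{p}^2$, so in fact $p \notin H_z$ iff $z \in B(p, \norm{p})$. Hence
$$ \Pee(p \notin \Vtyp) = \Pee\bigl( \Zcal \cap B(p,\norm{p}) \neq \emptyset \bigr) = 1 - \exp\bigl[ -\lambda \vol\bigl(B(p,\norm{p})\bigr) \bigr] = 1 - \exp\bigl[ -\norm{p}^d \bigr], $$
using $\lambda = 1/\kappa_d$ and $\vol(B(p,\norm{p})) = \kappa_d \norm{p}^d$. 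Since $\norm{p} = 1-\eps < 1$, we get $\norm{p}^d \to 0$, so $\Pee(p\notin\Vtyp) = \norm{p}^d + o(\norm{p}^d) = o_d(1)$. The same holds for $-p$, and a union bound over the two points finishes it.

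The key steps, in order, are: (i) recall the characterization $\Vtyp = \bigcap_{z} H_z$ and translate membership $p\in\Vtyp$ into the emptiness of $\Zcal$ in a specific ball; (ii) identify that $p \notin H_z$ is equivalent to $z$ lying in the ball $B(p,\norm{p})$, so the relevant region is a genuine Euclidean ball of radius $\norm{p} = 1-\eps$ (this is the geometric heart of the argument and is elementary — expand the inequality $z^t p > z^t z/2$ and complete the square); (iii) compute the Poisson void probability of that ball, using $\lambda\vol(B(p,\norm{p})) = \norm{p}^d = (1-\eps)^d$; (iv) observe $(1-\eps)^d \to 0$ and conclude $\Pee(p\in\Vtyp) \to 1$; (v) repeat for $-p$ by symmetry and take a union bound.

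I do not expect a genuine obstacle here: the statement is essentially a one-line void-probability computation once the geometry in step (ii) is unwound. The only thing to be slightly careful about is the open-versus-closed convention for the ball and the definition of $H_z$ — whether the boundary sphere $\partial B(p,\norm{p})$ causes issues — but since $\Zcal$ is a Poisson process, almost surely no point lies exactly on that sphere, so it is immaterial. It is also worth noting (for use later in the paper, e.g. in establishing the diameter and outradius lower bounds via Corollary~\ref{cor:twaalf} and this lemma) that the same computation shows any fixed point at distance $< 1$ from the origin lies in $\Vtyp$ whp, while the computation breaks down exactly at distance $1$, which is consistent with Theorem~\ref{thm:main}.
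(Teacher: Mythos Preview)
Your proof is correct and is essentially identical to the paper's own argument: both reduce $p\in\Vtyp$ to the Poisson void event $\Zcal\cap B(p,\norm{p})=\emptyset$, compute its probability as $e^{-(1-\eps)^d}=1-o_d(1)$, and finish by symmetry and a union bound. The only difference is that you spell out the equivalence $p\notin H_z \iff z\in B(p,\norm{p})$ explicitly, whereas the paper simply asserts it.
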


\begin{proof}
By symmetry and the union bound, we just need to show that 
$\Pee( (1-\eps,0,\dots,0) \not\in \Vtyp ) = o_d(1)$.
We simply note that 

$$ \begin{array}{rcl} 
\Pee\left( (1-\eps,0,\dots,0) \in \Vtyp\right) & = &  
\Pee( \Zcal \cap B( (1-\eps,0,\dots,0), 1-\eps) = \emptyset ) \\
& = & \Pee( \Po( (1-\eps)^d ) = 0 ) \\
& = & e^{-(1-\eps)^d } \\
& = & 1-o_d(1). 
\end{array} $$

\end{proof}

\begin{corollary}
For every $\eps>0$, with probability $1-o_d(1)$, we have $\diam(\Vtyp) > 2-\eps$ and $\outr(\Vtyp) > 1-\eps$.
\end{corollary}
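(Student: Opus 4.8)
The statement follows immediately from Lemma~\ref{lem:2pts}. Work on the event, of probability $1-o_d(1)$, that both $p_+ := (1-\eps',0,\dots,0)$ and $p_- := (\eps'-1,0,\dots,0)$ lie in $\Vtyp$, where I would take $\eps' := \eps/3$ (any fixed fraction of $\eps$ will do; the slack leaves room to convert ``$\geq$'' into a strict inequality). For the diameter, the definition $\diam(\Vtyp) = \sup_{a,b \in \Vtyp}\norm{a-b}$ together with $p_+, p_- \in \Vtyp$ gives $\diam(\Vtyp) \geq \norm{p_+ - p_-} = 2(1-\eps') = 2 - 2\eps'/1 > 2-\eps$. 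For the outradius, I would use the elementary fact that any ball $B(c,\rho) \supseteq \Vtyp$ must in particular contain $p_+$ and $p_-$, hence $\rho \geq \tfrac12\norm{p_+-p_-} = 1-\eps' > 1 - \eps$ (the center $c$ of a ball containing two antipodal-in-the-first-coordinate points is at distance at least half their separation from each); taking the infimum over all such $\rho$ yields $\outr(\Vtyp) \geq 1-\eps' > 1-\eps$.

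The only mild subtlety is bookkeeping with $\eps$: Lemma~\ref{lem:2pts} is quantified ``for every $\eps>0$'', so I am free to apply it with the smaller parameter $\eps' = \eps/3$, and the resulting strict inequalities $2-2\eps' > 2-\eps$ and $1-\eps' > 1-\eps$ hold with room to spare. I would remark explicitly that the two events ``$p_+ \in \Vtyp$'' and ``$p_- \in \Vtyp$'' are each of probability $1-o_d(1)$ by Lemma~\ref{lem:2pts} (using symmetry for $p_-$), so their intersection has probability $1-o_d(1)$ by a union bound, exactly as already noted in the proof of that lemma. There is no real obstacle here; the lemma has been arranged precisely so that this corollary is a one-line consequence, and the main content — placing an explicit point of norm nearly $1$ inside $\Vtyp$ with high probability via a zero-count of a Poisson random variable with mean $(1-\eps')^d \to 0$ — is already done in Lemma~\ref{lem:2pts}. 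Combined with Corollary~\ref{cor:twaalf}, this sandwiches $\diam(\Vtyp)$ between $2-\eps$ and $2+\eps$ and $\outr(\Vtyp)$ between $1-\eps$ and $1+\eps$ with probability $1-o_d(1)$, which is the convergence in probability asserted for these two quantities in Theorem~\ref{thm:main}.
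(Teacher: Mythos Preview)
Your proposal is correct and is exactly the approach the paper intends: the corollary is stated without proof in the paper precisely because it is an immediate consequence of Lemma~\ref{lem:2pts}, via the two observations you make (the two points witness the diameter lower bound directly, and any enclosing ball must have radius at least half their separation). Your bookkeeping with $\eps' = \eps/3$ is fine and makes the strict inequalities explicit.
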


For the mean width the lower bound also follows quite easily.

\begin{lemma}\label{lem:meanwlb}
 For every $\eps>0$, with probability $1-o_d(1)$, we have 
 $\meanw(\Vtyp) > 2-\eps$.
\end{lemma}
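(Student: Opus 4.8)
The plan is to show that in a typical direction $u\in S^{d-1}$, the support function of $\Vtyp$ in direction $u$ is close to $1$, and likewise in direction $-u$, so that $w(u,\Vtyp)$ is close to $2$ for most $u$; averaging then gives $\meanw(\Vtyp)=\Ee\,w(U,\Vtyp)>2-\eps$. Concretely, for a fixed unit vector $u$, consider the point $p_u := (1-\eps/2)u$. By the same computation as in Lemma~\ref{lem:2pts}, $p_u\in\Vtyp$ exactly when $\Zcal\cap B(p_u,(1-\eps/2))=\emptyset$, which has probability $e^{-(1-\eps/2)^d}=1-o_d(1)$, and crucially this bound is uniform in $u$. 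Since $u^tp_u = 1-\eps/2$, on this event $\sup_{a\in\Vtyp}u^ta\geq 1-\eps/2$; applying the same to $-u$ gives $\inf_{a\in\Vtyp}u^ta\leq -(1-\eps/2)$ on an event of probability $1-o_d(1)$, and hence $w(u,\Vtyp)\geq 2-\eps$.

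The one subtlety is that the ``bad'' event depends on $u$, so I cannot simply intersect over all $u$. Instead I would integrate. Let $X := \int_{S^{d-1}} \mathbf{1}[\,p_u\notin\Vtyp \text{ or } -p_u\notin\Vtyp\,]\,\sigma(du)$, where $\sigma$ is the normalized surface measure on $S^{d-1}$. By Fubini and the uniform bound above, $\Ee X \leq 2e^{-(1-\eps/2)^d} = o_d(1)$, so by Markov's inequality $X \leq \sqrt{\Ee X} = o_d(1)$ with probability $1-o_d(1)$. On the event $\{X\leq \eps/100\}$, for all but an $\eps/100$-fraction of directions $u$ we have $w(u,\Vtyp)\geq 2-\eps$, while for every $u$ we trivially have $w(u,\Vtyp)\geq 0$; therefore
$$ \meanw(\Vtyp) = \int_{S^{d-1}} w(u,\Vtyp)\,\sigma(du) \geq (1-\eps/100)(2-\eps) > 2 - 2\eps. $$
Replacing $\eps$ by $\eps/2$ throughout gives the stated bound.

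The main obstacle is really just the bookkeeping around the $u$-dependence of the exceptional event — once the Fubini/Markov step is set up, everything else is the elementary one-point-in-cell computation already used in Lemma~\ref{lem:2pts}. (Alternatively, one could combine this lemma with Corollary~\ref{cor:twaalf}, which gives $w(u,\Vtyp)\leq 2+\eps$ uniformly with high probability, to get a clean sandwich $2-\eps \leq \meanw(\Vtyp)\leq 2+\eps$; but for the lower bound alone the argument above suffices.)
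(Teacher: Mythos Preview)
Your argument is correct and follows essentially the same route as the paper: define the random fraction of ``bad'' directions, bound its expectation via Fubini and the one-point computation from Lemma~\ref{lem:2pts}, then apply Markov and average. One small slip: the bound on $\Ee X$ should read $\Ee X \leq 2\bigl(1-e^{-(1-\eps/2)^d}\bigr)=o_d(1)$, not $2e^{-(1-\eps/2)^d}$ (the latter tends to $2$, not $0$); your conclusion $\Ee X=o_d(1)$ is nonetheless correct.
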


\begin{proof}
We pick $\delta=\delta(\eps)>0$ a small constant to be determined.
We define

$$ \Xcal := \{ u \in S^{d-1} : w(u,\Vtyp) < 2-\delta \}, \quad X := \sigma(\Xcal)/\sigma(S^{d-1}), $$

\noindent
where $\sigma$ denotes the Haar measure on $S^{d-1}$ (the ``$(d-1)$-dimensional area'').
Put differently, $X$ is the fraction of $S^{d-1}$ that is covered by directions in 
which the width is $<2-\delta$.
We have 

$$ \meanw(\Vtyp) \geq (1-X) \cdot (2-\delta). $$

\noindent
Next, we observe that 

$$ \begin{array}{rcl} \Ee X 
& = & \displaystyle \Ee\left( \frac{1}{\sigma(S^{d-1})} \int_{S^{d-1}} 1_{\{w(u,\Vtyp)<2-\delta\}}\sigma({\dd}u) \right) 
= \frac{1}{\sigma(S^{d-1})} \int_{S^{d-1}} \Ee\left( 1_{\{w(u,\Vtyp)<2-\delta\}} \right) \sigma({\dd}u) \\[2ex]
& = & \displaystyle \frac{1}{\sigma(S^{d-1})} \int_{S^{d-1}} \Pee( w(u,\Vtyp)<2-\delta ) \sigma({\dd}u) 
= \displaystyle \frac{1}{\sigma(S^{d-1})} \int_{S^{d-1}} \Pee( w(e_1,\Vtyp)<2-\delta ) \sigma({\dd}u) \\[2ex]
& = & \displaystyle \Pee( w(e_1,\Vtyp)<2-\delta ).
\end{array} $$

\noindent
where $e_1 = (1,0,\dots,0)$ denotes the first standard basis vector, 
we used Fubini for non-negative integrands in the second equality and symmetry in the fourth.
Applying Lemma~\ref{lem:2pts}:

$$ 
\Ee X = \Pee( w(e_1,\Vtyp) < 2-\delta )  \leq   
1-\Pee( (1-\delta/2,0,\dots,0), (\delta/2-1,0,\dots,0) \in \Vtyp ) = o_d(1). 
$$

\noindent
Markov's inequality thus gives

$$ \Pee( X > \delta ) \leq \Ee X / \delta = o_d(1). $$

\noindent
It follows that, with probability $1-o_d(1)$, $\meanw(\Vtyp) > (2-\delta)\cdot(1-\delta) > 2-\eps$, where 
the last inequality holds having chosen $\delta$ appropriately.
\end{proof}

\subsection{The inradius}

A lower bound on the inradius of $\Vtyp$ is given by the following lemma.

\begin{lemma}
For every $\eps>0$, with probability $1-o_d(1)$, we have $B(\orig,\frac12-\eps) \subseteq \Vtyp$.
\end{lemma}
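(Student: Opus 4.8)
The plan is to show that the ball $B(\orig,\tfrac12-\eps)$ is contained in $\Vtyp = \bigcap_{z\in\Zcal} H_z$ with high probability, i.e.\ that with probability $1-o_d(1)$ every Poisson point $z\in\Zcal$ satisfies $B(\orig,\tfrac12-\eps)\subseteq H_z$. A point $x$ lies in $H_z$ precisely when $\norm{x}\le\norm{x-z}$, and the worst case over $x\in B(\orig,\tfrac12-\eps)$ is $x = (\tfrac12-\eps)\cdot z/\norm z$, the point of the ball furthest in the direction of $z$; for this $x$ the condition $\norm x\le\norm{x-z}$ holds iff $\norm z \ge 2(\tfrac12-\eps) = 1-2\eps$. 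Hence $B(\orig,\tfrac12-\eps)\subseteq\Vtyp$ \emph{as long as there is no Poisson point in $B(\orig,1-2\eps)$}. So the lemma reduces to the simple estimate $\Pee(\Zcal\cap B(\orig,1-2\eps) = \emptyset) = \Pee(\Po((1-2\eps)^d) = 0) = e^{-(1-2\eps)^d} = 1-o_d(1)$, using the normalization $\lambda = 1/\kappa_d$ which makes the expected number of points in $B(\orig,s)$ equal to $s^d$.

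First I would record the elementary geometric fact above: for $x\in B(\orig,r)$ and $z\ne\orig$, $\norm{x-z}^2-\norm x^2 = \norm z^2 - 2x^tz \ge \norm z^2 - 2r\norm z = \norm z(\norm z - 2r)$, which is nonnegative whenever $\norm z\ge 2r$. Taking $r = \tfrac12-\eps$ gives the threshold $\norm z\ge 1-2\eps$. Thus the event $\{B(\orig,\tfrac12-\eps)\subseteq H_z \text{ for all } z\in\Zcal\}$ contains the event $\{\Zcal\cap B(\orig,1-2\eps)=\emptyset\}$, and in fact the two are essentially equivalent (a point inside strictly violates containment). Then I would invoke \eqref{eq:altCdef} to conclude $B(\orig,\tfrac12-\eps)\subseteq\Vtyp$ on that event. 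Finally the Poisson void probability computation finishes it.

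There is essentially no obstacle here: unlike the outradius/diameter lower bounds which needed a union bound over a cube dissection, the inradius lower bound is a one-line void-probability estimate once the elementary convexity observation is in place. The only thing to be slightly careful about is the direction of the inequality relating $\eps$ to the radius $1-2\eps$ of the forbidden ball, and that $(1-2\eps)^d\to 0$ requires $\eps\in(0,\tfrac12)$, which we may assume since for $\eps\ge\tfrac12$ the statement $B(\orig,\tfrac12-\eps)\subseteq\Vtyp$ is vacuous (the ball is empty).
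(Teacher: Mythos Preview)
Your proof is correct and follows exactly the same approach as the paper: reduce $B(\orig,\tfrac12-\eps)\subseteq\Vtyp$ to the void event $\Zcal\cap B(\orig,1-2\eps)=\emptyset$ and compute the Poisson probability $e^{-(1-2\eps)^d}=1-o_d(1)$. The only difference is that you spell out the elementary geometric implication (via $\norm{x-z}^2-\norm{x}^2=\norm{z}^2-2x^tz\ge\norm{z}(\norm{z}-2r)$) that the paper leaves implicit.
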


\begin{proof}
We simply note that if $\Zcal \cap B(\orig,1-2\eps) = \emptyset$ then $B(\orig,1/2-\eps) \subseteq \Vtyp$, and
hence 

$$ \Pee\left( B(\orig,1/2-\eps\right) \subseteq \Vtyp ) \geq 
\Pee( \Po( (1-2\eps)^d ) = 0 ) = e^{-(1-2\eps)^d} = 1-o_d(1). $$

\end{proof}

To derive a matching upper bound we need to show that {\em no} ball of radius $\frac12 + \eps$ is contained in $\Vtyp$
(with probability $1-o_d(1)$). 
Even if $B(\orig,\frac12+\eps) \not\subseteq \Vtyp$ there are in principle still infinitely
many balls of the same radius but with a different center that need to be excluded.
In order to deal with this issue, we will make use of the following observation.

\begin{lemma}
For every $p \in \eR^d$ and $r>0$ we have

$$ \Pee( B(p,r) \subseteq \Vtyp) \leq \sqrt{\Pee( B(\orig,r) \subseteq \Vtyp )}. $$

\end{lemma}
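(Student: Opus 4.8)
The plan is to translate the event $\{B(p,r)\subseteq\Vtyp\}$ into a void event for the Poisson process and then reduce the claimed probabilistic inequality to an elementary volume comparison. Using the representation $\Vtyp=\bigcap_{z\in\Zcal}H_z$ from~\eqref{eq:altCdef}, one has $B(p,r)\subseteq\Vtyp$ if and only if $B(p,r)\subseteq H_z$ for every $z\in\Zcal$. Since $H_z=\{y:z^ty\le\norm{z}^2/2\}$ and $\sup_{y\in B(p,r)}z^ty=z^tp+r\norm{z}$ (for $z\neq\orig$, the only relevant case), the inclusion $B(p,r)\subseteq H_z$ is equivalent to $z^tp+r\norm{z}\le\norm{z}^2/2$; whether one takes open or closed balls is immaterial, as only a measure-zero boundary is affected. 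Hence, setting
$$ A_p:=\left\{z\in\eR^d:z^tp+r\norm{z}>\tfrac12\norm{z}^2\right\}, $$
we obtain $\{B(p,r)\subseteq\Vtyp\}=\{\Zcal\cap A_p=\emptyset\}$. The set $A_p$ is Borel and bounded, since for $\norm{z}\ge 2(r+\norm{p})$ the defining inequality fails; so by the void probability of the Poisson process,
$$ \Pee(B(p,r)\subseteq\Vtyp)=\exp\!\left(-\lambda\vol(A_p)\right), $$
and in particular $\Pee(B(\orig,r)\subseteq\Vtyp)=\exp(-\lambda\vol(A_0))$ with $A_0=\{z:r\norm{z}>\norm{z}^2/2\}=B(\orig,2r)$.

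Taking logarithms, the asserted inequality $\Pee(B(p,r)\subseteq\Vtyp)\le\sqrt{\Pee(B(\orig,r)\subseteq\Vtyp)}$ becomes the purely geometric statement
$$ \vol(A_p)\ \ge\ \tfrac12\,\vol(A_0). $$
To prove this I would use the reflection $z\mapsto-z$: directly from the definition, $A_{-p}=-A_p$, hence $\vol(A_{-p})=\vol(A_p)$. Moreover $z\in A_p\cup A_{-p}$ precisely when $|z^tp|+r\norm{z}>\norm{z}^2/2$, and since $|z^tp|\ge 0$ this set contains every $z$ with $r\norm{z}>\norm{z}^2/2$, i.e. $A_0\subseteq A_p\cup A_{-p}$. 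Therefore
$$ \vol(A_0)\le\vol(A_p\cup A_{-p})\le\vol(A_p)+\vol(A_{-p})=2\vol(A_p), $$
which is exactly the required bound; combined with the two displayed identities for the probabilities, this completes the proof.

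The argument is short and I do not anticipate a real obstacle. The only steps that warrant a little care are the identification of $B(p,r)\subseteq H_z$ with the linear inequality $z^tp+r\norm{z}\le\norm{z}^2/2$ (in particular the boundary/open-versus-closed-ball issue, which is harmless), and verifying that $A_p$ is bounded so that the void-probability formula applies and its logarithm is finite — both routine.
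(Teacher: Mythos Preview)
Your proof is correct and follows essentially the same approach as the paper: both reduce the claim to the volume inequality $\vol(A_p)\ge\tfrac12\vol(A_0)$ via void probabilities, and both establish this by observing that the ``half'' of $A_0=B(\orig,2r)$ on which $z^tp\ge 0$ is contained in $A_p$. The paper does this by putting $p$ on the positive $x_1$-axis and working with $A^+=A_0\cap\{x_1>0\}$, while you phrase it as $A_0\subseteq A_p\cup A_{-p}$ together with $\vol(A_{-p})=\vol(A_p)$; these are the same observation in slightly different dress.
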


\begin{proof}
By symmetry, we can take $p$ on the positive $x_1$-axis without loss of generality.
We set 

$$ A := B(\orig,2r), \quad A^+ := A \cap \{ x \in \eR^d : x_1 > 0 \}. $$

\noindent
We note that $A$ can be written alternatively as $A = \{ x \in \eR^d : B(\orig,r) \not\subseteq H_x \}$, 
where $H_x$ is as defined by~\eqref{eq:Hxdef}.
Appealing to the observation~\eqref{eq:altCdef}, we have

$$ \Pee( B(\orig,r) \subseteq \Vtyp ) = \Pee( \Zcal \cap A = \emptyset ). $$

We claim that if $\Zcal \cap A^+ \neq \emptyset$ then $B(p,r) \not\subseteq \Vtyp$.
To see this, let $x \in A^+$ be arbitrary.
Then there exists a $y \in B(\orig,r) \setminus H_x$.
That is, $y \in B(\orig,r)$ and $x^t y > x^tx / 2$.
But then $w := y + p \in B(p,r)$ and 

$$ x^t w = x^t y + x^t p > x^t y > x^t x / 2, $$

\noindent
where the first inequality holds by choice of $p$ and $A^+$ ($p$ is on the positive $x_1$-axis and $x$'s first coordinate is 
positive). This establishes that $B(p,r) \setminus H_x \neq \emptyset$ for all $x \in A^+$.
In other words, we've proven the claim that $\{\Zcal \cap A^+ \neq \emptyset\} \subseteq  \{B(p,r) \not\subseteq \Vtyp\}$.

This gives

$$ \Pee( B(p,r) \subseteq \Vtyp ) \leq \Pee( \Zcal \cap A^+ = \emptyset )
= e^{-\lambda\cdot\vol(A^+)}
= e^{-\lambda\cdot\vol(A)/2} 
= \sqrt{\Pee( \Zcal \cap A = \emptyset )}. $$

\noindent
The lemma follows.
\end{proof}

\begin{lemma}
 For every $\eps>0$, with probability $1-o_d(1)$, we have $\inr(\Vtyp) \leq \frac12 + \eps$.
\end{lemma}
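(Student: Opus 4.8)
The plan is to show that, with probability $1-o_d(1)$, no ball of radius $\frac12+\eps$ is contained in $\Vtyp$, which together with the matching lower bound establishes $\inr(\Vtyp)\to\frac12$ in probability. The previous lemma reduces the problem from all possible ball centers to a single one: for fixed $p$ and $r$ we have $\Pee(B(p,r)\subseteq\Vtyp)\le\sqrt{\Pee(B(\orig,r)\subseteq\Vtyp)}$. So the first step is to estimate $\Pee(B(\orig,r)\subseteq\Vtyp)$ for $r=\frac12+\eps'$ (where $\eps'$ is a small constant chosen depending on $\eps$). By~\eqref{eq:altCdef} this equals $\Pee(\Zcal\cap B(\orig,2r)=\emptyset)=\exp[-\lambda\vol(B(\orig,2r))]=\exp[-(2r)^d]=\exp[-(1+2\eps')^d]$, which is doubly-exponentially small in $d$. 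Hence $\Pee(B(p,r)\subseteq\Vtyp)\le\exp[-\frac12(1+2\eps')^d]$ for \emph{every} single center $p$.

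The second step is a union bound / net argument to pass from a single center to all centers. Any ball $B(p,\frac12+\eps)$ contained in $\Vtyp$ must have its center $p$ inside $\Vtyp$, hence (on the high-probability event of Lemma~\ref{lem:vertub}) inside $B(\orig,1+\eps)$. Dissect $B(\orig,1+\eps)$ into the cubes of $\Qcal_\delta$ with $\delta=\delta(\eps)$ a small constant; there are $N_{\delta,1+\eps}=e^{O(d)}$ of them by~\eqref{eq:Nub}. If $B(p,\frac12+\eps)\subseteq\Vtyp$ and $p$ lies in cube $q$ with corner $c$, then $B(c,\frac12+\eps-\delta)\subseteq B(p,\frac12+\eps)\subseteq\Vtyp$; choosing $\delta<\eps/2$ we get a fixed center $c$ with $B(c,\frac12+\eps/2)\subseteq\Vtyp$. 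Applying the previous lemma with $r=\frac12+\eps/2$ and the estimate above, each such event has probability at most $\exp[-\frac12(1+\eps)^d]$. The union bound over the $e^{O(d)}$ cubes gives

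$$ \Pee\left(\exists\, p\in B(\orig,1+\eps):B\left(p,\tfrac12+\eps\right)\subseteq\Vtyp\right) \le e^{O(d)}\cdot\exp\left[-\tfrac12(1+\eps)^d\right] + \Pee(\text{Lemma~\ref{lem:vertub} fails}) = o_d(1), $$

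since $(1+\eps)^d$ dominates $O(d)$. This shows $\inr(\Vtyp)\le\frac12+\eps$ with probability $1-o_d(1)$, completing the proof.

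The only mildly delicate point — and the step I would expect to need the most care — is correctly threading $\delta$ and the various $\eps$'s so that the net-rounding loss ($\delta$ per coordinate-corner, contributing a diameter-$\delta$ slack) is absorbed while keeping the exponent $(1+\Theta(\eps))^d$ strictly larger than the $O(d)$ from the cube count; but this is entirely routine once one fixes, say, $\delta=\eps/4$ and works with radius $\frac12+\eps/2$ in the intermediate step. No genuinely new idea is required beyond combining the symmetrization lemma with the standard cube-dissection union bound already used in Lemmas~\ref{lem:vertlb} and~\ref{lem:vertub}.
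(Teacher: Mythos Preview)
Your proof is correct and follows essentially the same approach as the paper: restrict to centers inside a bounded ball (using Lemma~\ref{lem:vertub}), discretize with the cube dissection $\Qcal_\delta$, and combine the symmetrization bound $\Pee(B(p,r)\subseteq\Vtyp)\le\sqrt{\Pee(B(\orig,r)\subseteq\Vtyp)}$ with a union bound over the $e^{O(d)}$ cube corners. The only cosmetic difference is that the paper localizes the center to $B(\orig,1/2)$ rather than $B(\orig,1+\eps)$, but this does not affect the argument.
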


\begin{proof}
Again we let $\delta=\delta(\eps)>0$ be an appropriately chosen small constant.
As we've already established that $\Vtyp \subseteq B(\orig,1+\eps)$, with probability $1-o_d(1)$,
it suffices to show that there is no $p \in B(\orig,1/2)$ such that $B(p,1/2+\eps) \subseteq \Vtyp$.
Each such $p$ lies inside some cube $q \in \Qcal_{\delta,1}$, and any corner $c$ of $q$ 
must satisfy that $B(c,1/2+\eps-\delta) \subseteq \Vtyp$.

Applying the previous lemma we find

$$ \begin{array}{rcl} 
\Pee( \inr(\Vtyp) > 1/2+\eps ) 
& \leq &  
\Pee\left( \Vtyp \not\subseteq  B(\orig,1+\eps) \right) 
+ N_{\delta,1} \cdot \sqrt{\Pee( B(\orig,1/2+\eps-\delta) \subseteq \Vtyp )} \\
& = & o_d(1) + e^{O(d)} \cdot \sqrt{\Pee( \Zcal \cap B(\orig,1+2\eps-2\delta) = \emptyset )} \\
& = & o_d(1) + e^{O(d)} \cdot e^{-(1+2\eps-2\delta)^d/2 } \\
& = & o_d(1), \end{array} $$

\noindent
having chosen $0<\delta<\eps$ appropriately.
\end{proof}

\subsection{Upper bound for the width}

\begin{lemma}
 For every $\eps>0$, with probability $1-o_d(1)$, we have
 $\width(\Vtyp) < 3/2 + \eps$.
\end{lemma}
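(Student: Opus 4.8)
The plan is to show that with probability $1-o_d(1)$ there exists a direction $u \in S^{d-1}$ in which the width of $\Vtyp$ is at most $3/2+\eps$. The natural candidate is $u = e_1$, and the strategy is to produce two Poisson points $z^+$ and $z^-$, one roughly in the direction $+e_1$ and one roughly in the direction $-e_1$, each at distance close to $3/4$ from the origin and with direction very close to the $x_1$-axis. Each such point $z$ contributes the supporting halfspace $H_z = \{x : z^t x \le \norm{z}^2/2\}$ to $\Vtyp$, and if $z$ has norm $\approx 3/4$ and lies essentially on the positive $x_1$-axis, then $H_z$ cuts off everything with $x_1 \gtrsim 3/8 \cdot (\text{something})$; more precisely $z^t x \le \norm{z}^2/2$ forces $x_1$ to be at most roughly $\norm{z}/2 \approx 3/8$ (up to the small tilt of $z$ away from $e_1$ and the small error in $\norm{z}$). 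Symmetrically $z^-$ forces $x_1 \gtrsim -3/8$. Hence $w(e_1, \Vtyp) \le 3/4 + o(1) < 3/2 + \eps$ once the parameters are tuned. The value $3/4$ is the point: a point on the $x_1$-axis at distance $t$ contributes the constraint $x_1 \le t/2$, so two such points at distance $t$ on opposite sides give width $t$, and we want the smallest $t$ for which such points exist with overwhelming probability near the axis — this turns out to be $t = 3/4$ because the relevant ``thin double cone near the axis of half-length $t$'' must contain expected $\gg 1$ Poisson points, which (using that the expected count in $B(\orig,s)$ is $s^d$ and Lemma~\ref{lem:X} for the angular factor) needs $t$ bounded below by $3/4$... wait — I should reconsider: actually the cone of points within angle $\alpha$ of $e_1$ and within distance $t$ has measure $\approx t^d \sin^{d-1}\alpha / (\text{poly})$, and for fixed small $\alpha$ this is $(t\sin\alpha)^d e^{O(d)}$, so we need $t \sin\alpha > 1$, i.e. $t > 1/\sin\alpha \ge 1$; letting $\alpha \to \pi/2$ we can take $t$ as close to $1$ as we like, but then the tilt error degrades the bound. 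Optimizing the trade-off between $t$ (distance, want small) and $\alpha$ (want small so the halfspace is nearly $\{x_1 \le t/2\}$) against the constraint $t\sin\alpha > 1$ is exactly what produces the constant $3/2$ in the end: the cut-off coordinate is about $t/(2\cos\alpha)$-ish and one optimizes $t\cos\alpha$ subject to $t \sin\alpha \ge 1$.

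Concretely, the steps I would carry out are: (1) fix a small $\delta>0$ and an angle $\alpha = \alpha(\eps)$, and tessellate the spherical-cap-times-radial-shell region $\{z : \norm z \in (t-\delta, t+\delta),\ \angle z \orig e_1 < \alpha\}$ (for an appropriate $t < 3/2 + \eps$, to be fixed at the end) by cubes of $\Qcal_\delta$; (2) use Lemma~\ref{lem:X} together with the radial factor to show this region has volume $e^{\Omega(d)}$ — hence under $\lambda = 1/\kappa_d$ it contains expected $e^{\Omega(d)}$ Poisson points — so with probability $1-o_d(1)$ it is nonempty, and by symmetry the reflected region in the $-e_1$ direction is nonempty too; (3) pick $z^+$ in the first region and $z^-$ in the second; (4) bound $\sup_{x\in\Vtyp} e_1^t x$ from above using only the constraint $x \in H_{z^+}$: writing $z^+ = \norm{z^+}(\cos\theta\, e_1 + \sin\theta\, v)$ with $\theta < \alpha$ and $v \perp e_1$, the constraint $(z^+)^t x \le \norm{z^+}^2/2$ together with $x \in B(\orig, 1+\eps)$ (which holds whp by Lemma~\ref{lem:vertub}/Corollary~\ref{cor:twaalf}) yields $x_1 \le \frac{\norm{z^+}}{2\cos\theta} + (1+\eps)\tan\theta$, which is $\le \frac{t+\delta}{2\cos\alpha} + (1+\eps)\tan\alpha$; (5) symmetrically bound $\inf_{x\in\Vtyp} e_1^t x$ from below; (6) conclude $w(e_1,\Vtyp) \le \frac{t+\delta}{\cos\alpha} + 2(1+\eps)\tan\alpha$ and then choose $t$ slightly less than $3/2$, and $\alpha,\delta$ small enough depending on $\eps$, so that the right-hand side is $< 3/2 + \eps$; the constraint tying these together is that the region in step (2) must have positive exponential volume, i.e. (roughly) $t\sin\alpha > 1$, and one checks $t$ slightly below $3/2$ is feasible. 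Since $\width(\Vtyp) \le w(e_1,\Vtyp)$, this finishes the proof.

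The main obstacle I expect is step (2) combined with the optimization: getting a clean lower bound on the volume of the ``near-axis shell'' region with the right exponential rate, so that the resulting constraint on $(t,\alpha)$ is exactly compatible with pushing $\frac{t}{\cos\alpha}+2\tan\alpha$ below $3/2$. This is where Lemma~\ref{lem:X} is used in its \emph{lower} bound form, and one has to be careful that the hypothesis $\alpha < \arccos(\sqrt{2/d})$ is satisfied (it is, for fixed $\alpha$ and $d$ large) and that the polynomial-in-$d$ factors are harmless since we only need an $e^{\Omega(d)}$ lower bound on the expected number of points. A secondary technical point is handling the discretization cleanly: a cube of $\Qcal_\delta$ meeting the target region may stick out slightly in radius and in angle, so one works with a slightly enlarged target region when asserting nonemptiness and a slightly shrunk one — or, more simply, one just notes that if the \emph{region} is nonempty then some Poisson point lies in it directly (the region is open-ish), so the cube dissection is only needed if one instead wants a union-bound argument; the cleanest route is to apply the Chernoff/Poisson lower tail $\Pee(\Po(\mu)=0)=e^{-\mu}$ with $\mu = e^{\Omega(d)}$ directly to the region, avoiding the dissection entirely. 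I would adopt that cleaner route. Everything else — the halfspace geometry in step (4), the symmetry reductions — is routine.
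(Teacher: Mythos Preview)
Your approach has a genuine gap: the optimization in step~(6) does \emph{not} produce the constant $3/2$. If you carry it out, minimizing
\[
\frac{t}{\cos\alpha} + 2(1+\eps)\tan\alpha
\quad\text{subject to}\quad t\sin\alpha > 1
\]
(the constraint needed for the ``near-axis shell'' in step~(2) to contain a Poisson point with probability $1-o_d(1)$, since the expected count there is of order $t^d\sin^{d-1}\alpha$), you find that at the boundary $t=1/\sin\alpha$ the expression becomes $(1+2\sin^2\alpha)/(\sin\alpha\cos\alpha)$, which is minimized at $\alpha=\pi/6$ with value $2\sqrt{3}\approx 3.46$. This is worse than the trivial bound $\width(\Vtyp)\le\diam(\Vtyp)<2+\eps$ from Corollary~\ref{cor:twaalf}. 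The hand-waving assertion that ``optimizing the trade-off \ldots\ is exactly what produces the constant $3/2$'' is simply false; your own mid-paragraph ``wait --- I should reconsider'' was the right instinct.

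The underlying problem is that you fix the direction $e_1$ \emph{before} looking at the Poisson process, which forces the angular constraint $t\sin\alpha>1$. The paper's argument avoids this entirely: it takes a \emph{single} Poisson point $z$ with $\norm{z}<1+\eps$ (one exists with probability $1-e^{-(1+\eps)^d}=1-o_d(1)$, no angular restriction needed) and then measures the width in the direction $u:=z/\norm{z}$. In that direction the halfspace $H_z$ gives $\sup_{x\in\Vtyp} u^t x \le \norm{z}/2 < (1+\eps)/2$, while the containment $\Vtyp\subseteq B(\orig,1+\eps)$ handles the opposite side: $\sup_{x\in\Vtyp}(-u)^t x \le 1+\eps$. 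Hence $w(u,\Vtyp)\le (3/2)(1+\eps)$. The missing idea is to let the process choose the direction and to use the ball bound asymmetrically, on only one side.
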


\begin{proof}
As noted before, it follows from Lemma~\ref{lem:vertub} that $\Vtyp \subseteq B(\orig,1+\eps)$ with 
probability $1-o_d(1)$.
Therefore we also have, with probability $1-o_d(1)$, that 

$$ \Vtyp \subseteq B(\orig,1+\eps) \cap H_z, $$

\noindent
for all $z \in \Zcal$ -- where $H_z$ is as defined by~\eqref{eq:Hxdef} above and we use~\eqref{eq:altCdef}. 
This gives that, with probability $1-o_d(1)$:

$$ \begin{array}{rcl} 
\width(\Vtyp) 
& \leq & \displaystyle 
\inf_{z\in\Zcal}  \width( B(\orig,1+\eps) \cap H_z ) \\[2ex]
& = & \displaystyle
\inf_{z\in\Zcal}\inf_{u\in S^{d-1}} w( u, B(\orig,1+\eps) \cap H_z ) \\[2ex]
& \leq & \displaystyle 
\inf_{z\in\Zcal} w\left(\frac{z}{\norm{z}}, B(\orig,1+\eps) \cap H_z \right) \\[2ex]
& \leq & \displaystyle 
1+\eps + \inf_{z\in\Zcal} \norm{z}/2. 
\end{array} $$

\noindent 
Since 

$$ \Pee( \Zcal \cap B(\orig,1+\eps) \neq \emptyset ) = 1 - e^{-(1+\eps)^d}  = 1 - o_d(1), $$

\noindent
it follows that, with probability $1-o_d(1)$, we have $\inf_{z\in\Zcal} \norm{z} < 1+\eps$.
In other words, with probability $1-o_d(1)$, $\width(\Vtyp) \leq (3/2)\cdot (1+\eps)$.
Adjusting the value of $\eps$, the result follows.
\end{proof}

\subsection{Lower bound on the width}

Recall that the {\em polar} of a set $A \subseteq \eR^d$ is defined by 

$$ A^{\circ} := \left\{ y \in \eR^d : a^t y \leq 1 \text{ for all } a \in A \right\}. $$

\noindent
(For background and an overview of properties of the polar set, see e.g.~\cite{Bronsted}.)
We will find it convenient to switch attention to the polar $\Vtyp^{\circ}$ of $\Vtyp$.
Let $f : \eR^d \to \eR^d$ be the map defined by $x \mapsto (2/\norm{x}^2) x$ for $x\neq\orig$ and
$\orig\mapsto\orig$.
We let the point set $\Ycal \subseteq \eR^d$ be defined by

$$ \Ycal := f[\Zcal], $$

\noindent
where $\Zcal$ as usual is the Poisson point process of constant intensity $\lambda=1/\kappa_d$ that we
used to define the typical cell $\Vtyp$.
By the mapping theorem (see e.g.~\cite{Kingman}, page 18) $\Ycal$ is also a Poisson point 
process on $\eR^d\setminus\{\orig\}$. 
Its intensity 
function can easily be worked out to be $\text{const} \cdot \norm{x}^{-2d}$, but we shall not be needing that.
Convex hulls of Poisson processes with power-law intensity and their duals were studied 
in~\cite{ZakharAdvMath, ZakharPAMS, KMTT, KTZ}.

We will rely on the following well-known observation.
For a proof,  see for instance the footnote on page 17 of~\cite{ZakharAdvMath}.

\begin{proposition}
Almost surely, $\Vtyp^{\circ} = \conv(\Ycal)$.
\end{proposition}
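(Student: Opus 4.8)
The claim to prove is that almost surely $\Vtyp^\circ = \conv(\Ycal)$, where $\Ycal = f[\Zcal]$ and $f(x) = (2/\norm{x}^2)x$ is the standard inversion-type map sending each Poisson point to the ``pole'' of its bisector halfspace. The plan is to chase through the defining formula $\Vtyp = \bigcap_{z\in\Zcal} H_z$ and take polars on both sides, using the general fact that polarity turns intersections into closed convex hulls of unions.

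First I would recall the basic polarity dictionary. For a closed halfspace $H_z = \{y : z^t y \le z^t z/2\}$ containing the origin in its interior (for $z \neq \orig$), one has $z^t y \le z^t z /2 \iff (2z/\norm{z}^2)^t y \le 1$, so $H_z = \{y : f(z)^t y \le 1\}$, i.e.\ $H_z$ is exactly the halfspace with ``pole'' $f(z)$. Consequently the polar of the single halfspace is the segment $H_z^\circ = \conv\{\orig, f(z)\} = [\orig, f(z)]$. Next, I would invoke the standard identity that for any family $\{A_i\}$ of sets each containing the origin, $\left(\bigcap_i A_i\right)^\circ = \cl\conv\left(\bigcup_i A_i^\circ\right)$, the closed convex hull of the union of the polars. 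Applying this with $A_z = H_z$ gives $\Vtyp^\circ = \left(\bigcap_{z\in\Zcal} H_z\right)^\circ = \cl\conv\left(\bigcup_{z\in\Zcal}[\orig,f(z)]\right) = \cl\conv(\{\orig\}\cup\Ycal)$. Since $\orig = f$ of nothing but is anyway in the convex hull of any nonempty $\Ycal$ once we have points on ``both sides'' — more simply, the origin is an interior point of $\Vtyp$, so $\Vtyp^\circ$ is bounded and $\orig \in \inter\Vtyp^\circ$, and in fact $\orig \in \conv(\Ycal)$ almost surely because $\Zcal$ a.s.\ has points in every open halfspace through the origin. Hence the ``$\orig$'' can be dropped from the hull.

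It remains to remove the closure, i.e.\ to argue $\conv(\Ycal)$ is already closed. This is where I expect the only real (if minor) subtlety to lie. Almost surely $\Vtyp$ is a polytope with the origin in its interior, hence $\Vtyp^\circ$ is also a polytope (the polar of a polytope containing the origin in its interior is again such a polytope), in particular compact and equal to the convex hull of its finitely many vertices. Each vertex of $\Vtyp^\circ$ is the pole of a facet hyperplane of $\Vtyp$, and each facet of $\Vtyp$ is supported by some $\partial H_z$ with $z\in\Zcal$ (because $\Vtyp = \bigcap_z H_z$ and each $H_z$ either is redundant or contributes a facet). Therefore every vertex of $\Vtyp^\circ$ is of the form $f(z)$ for some $z\in\Zcal$, so $\Vtyp^\circ = \conv(\text{its vertices}) \subseteq \conv(\Ycal)$. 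The reverse inclusion $\conv(\Ycal)\subseteq\Vtyp^\circ$ is immediate since each $f(z)\in H_z^\circ \subseteq \Vtyp^\circ$ and $\Vtyp^\circ$ is convex. Combining the two inclusions yields the equality without any closure, and the proof is complete. The main obstacle is simply being careful that the a.s.\ events (finiteness of the polytope, origin in the interior, every facet arising from a Poisson point, $\Zcal$ meeting every halfspace through $\orig$) all hold simultaneously — each is classical and cited in the excerpt — so no genuinely hard estimate is needed here; the content is purely the polarity bookkeeping.
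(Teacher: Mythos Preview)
Your argument is correct. The paper does not actually supply a proof of this proposition; it simply refers the reader to a footnote in~\cite{ZakharAdvMath}. Your polarity bookkeeping --- rewriting $H_z=\{y:f(z)^t y\le 1\}$, using $\bigl(\bigcap_i A_i\bigr)^{\circ}=\cl\conv\bigl(\bigcup_i A_i^{\circ}\bigr)$, and then removing the closure via the a.s.\ polytope structure of $\Vtyp$ and the facet/vertex correspondence under polarity --- is exactly the standard route and is essentially what the cited reference does as well. One small cosmetic point: you do not really need the separate step showing $\orig\in\conv(\Ycal)$, since your final two-inclusion argument ($\Vtyp^{\circ}\subseteq\conv(\Ycal)$ via vertices and $\conv(\Ycal)\subseteq\Vtyp^{\circ}$ directly) already bypasses both the closure and the extra $\{\orig\}$ in one stroke.
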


\noindent
Although we shall not need this fact below, let us remark that $\orig$ belongs to the interior 
of $\conv (\mathcal Y)$ almost surely (see Corollary~4.2 on page 1040 of~\cite{KMTT}).

We define for $u \in S^{d-1}$ and $A \subseteq  \eR^d$:

$$ \varphi(u,A) := \sup\{ u^t a : a \in A \}, 
\quad \psi(u,A) := \sup\{ \lambda \geq 0 : \lambda u \in A \}. $$

\noindent
We remark that we can write 

$$ w(u,A) = \varphi(u,A) + \varphi(-u,A). $$

Another well-known, elementary observation is the following.
It for instance occurs as equation (14.42) in~\cite{SchneiderWeil}.

\begin{lemma}\label{lem:phipsi}
For $A \subseteq \eR^d$ compact and convex with $\orig \in \inter A$ and 
 all $u \in S^{d-1}$ we have 

$$ \varphi(u,A) = \frac{1}{\psi(u,A^\circ)}. $$

\end{lemma}

\begin{lemma}\label{lem:angleswhp}
Let $0<r<2$ be fixed. 
With probability $1-o_d(1)$, any two distinct $Y_1, Y_2 \in \Ycal$ with $\norm{Y_1}>r,\norm{Y_2} > r$
satisfy 

$$\alpha < \angle Y_1{\orig}Y_2 < \pi-\alpha,$$

\noindent
where $\alpha := \arcsin\left(r^2/4\right) < \pi/2$.
\end{lemma}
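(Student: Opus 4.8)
The plan is to transfer the statement about the dual point process $\Ycal$ back to the original Poisson process $\Zcal$ via the inversion map $f(x) = (2/\norm{x}^2)x$, and then apply the angle estimate of Lemma~\ref{lem:X}. First I would observe that $f$ is an involution and that for $z \neq \orig$ one has $\norm{f(z)} = 2/\norm{z}$; hence a point $Y = f(z) \in \Ycal$ satisfies $\norm{Y} > r$ if and only if $\norm{z} < 2/r$. Thus the event in question concerns pairs of Poisson points $z_1, z_2 \in \Zcal \cap B(\orig, 2/r)$. Since $\Ee|\Zcal \cap B(\orig,2/r)| = (2/r)^d$, which is exponentially large, I cannot simply union-bound over all such pairs directly; instead I would pass to the cube dissection $\Qcal_\delta$ restricted to $B(\orig, 2/r)$, and bound the angle between points in terms of the angle between the cubes containing them, losing only a $\delta$-dependent error that can be absorbed by choosing $\delta$ small relative to the gap between $r$ and $2$.

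The key computation is the following elementary fact relating the angle at the origin in the two pictures. A short calculation with the cosine rule (or directly, since $f$ acts as inversion composed with scaling) shows that the angle $\angle f(z_1)\,\orig\,f(z_2)$ equals $\pi$ minus the angle of the triangle $\orig z_1 z_2$ at the vertex... actually more precisely one gets a clean relation: writing $\theta = \angle z_1 \orig z_2$, inversion preserves the ray directions from the origin, so $\angle f(z_1)\,\orig\,f(z_2) = \angle z_1 \orig z_2$ exactly. Hence the statement to prove is simply: with probability $1-o_d(1)$, any two distinct $z_1, z_2 \in \Zcal$ with $\norm{z_1}, \norm{z_2} < 2/r$ satisfy $\alpha < \angle z_1 \orig z_2 < \pi - \alpha$ with $\alpha = \arcsin(r^2/4)$. (I should double-check the exact trigonometric identity during the write-up; the bookkeeping of which angle maps to which is the one place errors creep in, but the direction-preserving property of inversion makes the equality of angles at $\orig$ essentially automatic.)

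So I would reduce to showing that no two Poisson points in the ball $B(\orig, 2/r)$ subtend a small angle (or an angle close to $\pi$) at the origin. By the union bound over ordered pairs of cubes $q_1, q_2 \in \Qcal_{\delta, 2/r}$ and the fact that $N_{\delta, 2/r} = e^{O(d)}$ by~\eqref{eq:Nub}, it suffices to show that for a fixed pair of points $c_1, c_2$ (corners of two cubes), the probability that $\Zcal$ contains a point near $c_1$ and a point near $c_2$ with those two points subtending angle less than $\alpha + O(\delta)$ (resp.\ more than $\pi - \alpha - O(\delta)$) at $\orig$ is at most $e^{-\omega(d)}$. Fixing the cube containing $z_1$, the set of directions $u \in S^{d-1}$ with $\angle u \,\orig\, z_1 < \alpha + O(\delta)$ has normalized Haar measure at most $\sin^{d-1}(\alpha + O(\delta))/(2\cos(\alpha+O(\delta))\sqrt d)$ by Lemma~\ref{lem:X}, and since $\alpha + O(\delta) < \pi/2$ is bounded away from $\pi/2$ for $\delta$ small, this is exponentially small in $d$; the expected number of Poisson points of $\Zcal \cap B(\orig,2/r)$ landing in this cap region is then $(2/r)^d$ times an exponentially small factor, which I must arrange to be $e^{-\omega(d)}$ — here I would need $r$ close enough to $2$, or rather I should be more careful and instead bound, for each fixed cube $q_1$, the probability that there is any point of $\Zcal$ in $B(\orig,2/r)$ within angle $\alpha+O(\delta)$ of $q_1$'s direction, summing over $q_1$. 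The near-$\pi$ case is symmetric via $u \mapsto -u$.

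The main obstacle I anticipate is making the counting work out: naively the number of cube-pairs is $e^{O(d)}$ and the per-pair probability must beat that, which forces me to be careful that $\sin^{d-1}(\alpha)$ with $\alpha = \arcsin(r^2/4)$ really is small enough — i.e.\ that $(2/r)^{2} \cdot \sin(\arcsin(r^2/4) + O(\delta)) < 1$, equivalently $\sin(\arcsin(r^2/4)) = r^2/4 < (r/2)^2$, which is an equality, so the $O(\delta)$ slack is genuinely needed and the constant $\alpha = \arcsin(r^2/4)$ is exactly the threshold — reassuringly consistent with the statement. The right way to handle this cleanly is probably to fix the direction of the \emph{first} point only up to a cube (factor $N_{\delta,2/r} = (2/r)^d e^{O(d)}$ for the choice of $q_1$ and the requirement $\Zcal \cap q_1 \neq \emptyset$, which costs nothing since we only need an \emph{upper} bound and can drop that requirement), then bound the probability that $\Zcal$ places a point in the slightly-enlarged cap of half-angle $\alpha + c\delta$ around $q_1$'s direction intersected with $B(\orig,2/r)$; this expected count is $(2/r)^d \cdot \sin^{d-1}(\alpha+c\delta)/(2\cos(\alpha+c\delta)\sqrt d) \cdot e^{O(d)}$, and choosing $\delta$ small enough that $(2/r) \sin(\alpha + c\delta) < 1$ makes the whole sum $e^{-\Omega(d^2)} \cdot e^{O(d)} = o_d(1)$. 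Summing over the $e^{O(d)}$ choices of $q_1$ keeps it $o_d(1)$, and the same argument with $-u$ handles angles near $\pi$.
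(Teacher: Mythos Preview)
Your reduction is spot on: the inversion $f(x)=(2/\norm{x}^2)x$ preserves rays through the origin, so $\angle f(z_1)\,\orig\,f(z_2)=\angle z_1\,\orig\,z_2$ exactly, and the lemma becomes a statement about pairs of Poisson points in $B(\orig,2/r)$. This is precisely what the paper does.

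From here, however, the cube dissection is a detour that does not close as you have written it. The paper simply bounds the expected number of bad ordered pairs directly via the Mecke formula:
\[
\Ee X \;=\; \lambda^2\!\int_{B(\orig,2/r)}\!\int_{B(\orig,2/r)} 1_{\{\angle z_1\orig z_2\notin(\alpha,\pi-\alpha)\}}\,dz_1\,dz_2
\;=\; (4/r^2)^d\cdot 2\,\Pee(\angle U\orig v\le\alpha),
\]
and then Lemma~\ref{lem:X} gives $\Ee X \le (4/r^2)^d\cdot \sin^{d-1}\alpha/(\cos\alpha\sqrt d)$. Since $\sin\alpha=r^2/4$ by definition, $(4/r^2)\sin\alpha=1$ and the whole thing is $O(1/\sqrt d)$. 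Markov finishes it. No discretization, no $\delta$, no issues with cubes near the origin.

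Your union-bound version breaks down at the step where you ``drop the requirement $\Zcal\cap q_1\neq\emptyset$''. After dropping it, the per-$q_1$ cap probability is only $\sim((2/r)\sin\alpha)^d=(r/2)^d=e^{-\Theta(d)}$, \emph{not} $e^{-\Omega(d^2)}$; there is no mechanism producing a $d^2$ in the exponent here. Meanwhile the number of cubes is $N_{\delta,2/r}\le (2/(r\delta))^d e^{O(d)}$ by~\eqref{eq:Nub}, and the $\delta^{-d}$ is not absorbable: the product is $\sim\delta^{-d}\to\infty$. The point is that $\alpha=\arcsin(r^2/4)$ sits exactly at threshold for the \emph{second}-moment integral $(4/r^2)^d\sin^d\alpha=1$, so any discretization slack has to be paid for by the factor you discarded, namely $\Pee(\Zcal\cap q_1\neq\emptyset)\sim\lambda\vol(q_1)$. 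If you keep that factor and sum $\sum_{q_1}\lambda\vol(q_1)\cdot\lambda\vol(\text{cap}(q_1))$, you recover precisely the Mecke integral above --- so the cube route, done correctly, is just a discretized rederivation of the two-line argument.
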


\begin{proof} Let us write 

$$ \begin{array}{rcl} X 
& := & \displaystyle 
\left|\left\{ (Y_1,Y_2) \in \Ycal^2 : \begin{array}{l} \norm{Y_1},\norm{Y_2} > r \text{ and } Y_1\neq Y_2 \text{ and } \\
\angle Y_1{\orig}Y_2 \leq \alpha \text{ or } \angle Y_1{\orig}Y_2 \geq \pi-\alpha\end{array} \right\}\right| \\[2ex]
& = & \displaystyle 
\left|\left\{ (Z_1, Z_2) \in \Zcal^2 : \begin{array}{l} \norm{Z_1},\norm{Z_2} < 2/r \text{ and } Z_1\neq Z_2 \text{ and } \\
\angle Z_1{\orig}Z_2 \leq \alpha \text{ or } \angle Z_1{\orig}Z_2 \geq \pi-\alpha\end{array} \right\}\right|. 
\end{array} $$

Using the Mecke formula (see e.g.~\cite{SchneiderWeil}, Corollary 3.2.3), we find

$$ \begin{array}{rcl} \Ee X & = & \displaystyle 
\lambda^2 \int_{\eR^d}\int_{\eR^d} 1_{\left\{\begin{array}{l} \norm{z_1},\norm{z_2} < 2/r \text{ and }  \\
\angle z_1{\orig}z_2 \leq \alpha \text{ or } \angle z_1{\orig}z_2 \geq \pi-\alpha\end{array} \right\}}
\dd z_1 \dd z_2 \\[5ex]
& = & \displaystyle \lambda^2 \cdot \vol( B(\orig,2/r) )^2 \cdot \Pee( \angle U_1\orig U_2 \leq \alpha \text{ or }
\angle U_1\orig U_2 \geq \pi-\alpha ) \\[2ex]
& = & 2 \cdot (4/r^2)^{d} \cdot \Pee( \angle U_1\orig v \leq \alpha ) \\[2ex]
& = & o\left( (4/r^2)^{d} \cdot \sin^d \alpha \right) \\[2ex]
& = & o_d(1),   
 \end{array} $$
 
\noindent
where $U_1,U_2$ denote points chosen uniformly at random on $S^{d-1}$ and $v \neq \orig$ is an arbitrary 
but fixed point in $\eR^d$. 
(The second line follows by symmetry considerations; the third line uses the choice of $\lambda$ and 
more symmetry considerations, and; 
we applied Lemma~\ref{lem:X} in the fourth line.)
\end{proof}

\begin{lemma}\label{lem:Yball2}
 For every $\eps>0$, with probability $1-o_d(1)$, we have $\Ycal \subseteq B(\orig,2+\eps)$.
\end{lemma}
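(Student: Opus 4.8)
The plan is to translate the event $\Ycal \subseteq B(\orig,2+\eps)$ back to a statement about the original Poisson process $\Zcal$ via the inversion map $f$, and then observe that it is essentially a statement about a forbidden region near the origin. Recall $\Ycal = f[\Zcal]$ where $f(x) = (2/\norm{x}^2)x$, so that $\norm{f(x)} = 2/\norm{x}$. Hence $Y = f(Z)$ satisfies $\norm{Y} > 2+\eps$ if and only if $\norm{Z} < 2/(2+\eps)$. Therefore
$$ \Pee\left( \Ycal \not\subseteq B(\orig,2+\eps) \right) = \Pee\left( \Zcal \cap B\left(\orig, \tfrac{2}{2+\eps}\right) \neq \emptyset \right). $$

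Since $2/(2+\eps) < 1$, and $\Zcal$ has intensity $\lambda = 1/\kappa_d$ so that the expected number of points of $\Zcal$ in $B(\orig,s)$ is exactly $s^d$, the right-hand side equals
$$ 1 - \exp\left[ -\left(\tfrac{2}{2+\eps}\right)^{d} \right] = o_d(1), $$
because $(2/(2+\eps))^d \to 0$ as $d\to\infty$ for fixed $\eps>0$. This gives the claim.

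There is no real obstacle here; the only point requiring a word of care is that $f$ is a bijection of $\eR^d\setminus\{\orig\}$ onto itself with $\norm{f(x)}\norm{x} = 2$, so the events correspond exactly, and $\orig\notin\Zcal$ almost surely so the behaviour of $f$ at the origin is irrelevant. One could equally phrase it without mentioning $f$ at all, simply noting that a point $Y\in\Ycal$ with $\norm{Y}>2+\eps$ would have to be the image of a point of $\Zcal$ within distance $2/(2+\eps)$ of $\orig$, and such points are absent with probability $1-o_d(1)$ by the Poisson void probability. Either way the computation is immediate.
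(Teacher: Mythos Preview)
Your proof is correct and essentially identical to the paper's own argument: both translate the event back to $\Zcal$ via $\norm{f(x)}=2/\norm{x}$ and compute the Poisson void probability for the ball of radius $2/(2+\eps)$. The only cosmetic difference is that the paper uses the closed ball (since $B(\orig,2+\eps)$ is open, $\norm{Y}\geq 2+\eps$ corresponds to $\norm{Z}\leq 2/(2+\eps)$) and bounds $1-e^{-\mu}\leq\mu$, whereas you write the exact value; the boundary discrepancy is a null event and does not affect anything.
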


\begin{proof}
This follows from the fact that

$$ |\Ycal \setminus B(\orig,2+\eps)| = |\Zcal \cap \clo(B(\orig,2/(2+\eps))) | \isd 
\Po\left( \left(2/(2+\eps)\right)^d \right), $$ 

\noindent
where $\clo(.)$ denotes topological closure. We see that 

$$ \Pee( \Ycal \not\subseteq B(\orig,2+\eps) ) \leq \left(\frac{2}{2+\eps}\right)^d = o_d(1). $$

\end{proof}

For the remainder of this section, we set 

\begin{equation}\label{eq:rdef}
 r := \frac{4}{\sqrt{5}}, \quad \alpha := \arcsin(r^2/4).
\end{equation}

\noindent
(Note that $r<2$ and $\alpha < \pi/2$.)
We separate out the following observation as a lemma.

\begin{lemma}\label{lem:cosalpha2}
With $r,\alpha$ as given by~\eqref{eq:rdef}, we have $\cos(\alpha/2) = r/2$.
\end{lemma}

\begin{proof}
By the double angle formula

$$ 4/5 = r^2/4 = \sin(\alpha) = 2\sin(\alpha/2)\cos(\alpha/2) 
= 2 \cos(\alpha/2) \sqrt{1-\cos^2(\alpha/2)}. $$

\noindent
We see that we must have either $\cos(\alpha/2) = 1/\sqrt{5}$ or $\cos(\alpha/2) = 2/\sqrt{5}$.
We can exclude the first possibility, as $\cos(\alpha/2) = 1/\sqrt{5} < 1/\sqrt{2}$ would imply that 
$\alpha/2 > \pi/4$, which in turn would imply that $\alpha > \pi/2$, 
but clearly $\alpha = \arcsin(4/5) \in (0,\pi/2)$.
It follows that $\cos(\alpha/2) = 2/\sqrt{5} = r/2$, as stated by the lemma.
\end{proof}

\begin{lemma}\label{lem:notinconv}
For all $\eps > 0$ there is a $\delta=\delta(\eps)>0$ such that the following holds.
If $V \subseteq \eR^d$ and $v \in \eR^d$ are such that 
\begin{enumerate}
 \item $V \subseteq B(\orig,2+\delta)$, and;
 \item $\norm{v} > r+\eps$, and;
 \item $\angle v{\orig}w \geq \alpha/2$ for all $w \in V$ with $\norm{w} > r$, 
\end{enumerate}
then $v \not\in \conv(V)$.
\end{lemma}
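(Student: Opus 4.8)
The plan is to exhibit a separating direction. Since $\norm{v} > r+\eps > 0$ we may set $u := v/\norm{v} \in S^{d-1}$, and the idea is to show that $u$ ``sees'' $v$ strictly further out than every point of $V$, i.e.
$$ \sup_{w\in V} u^t w \;<\; u^t v \;=\; \norm{v}. $$
This suffices: any point of $\conv(V)$ is a convex combination $\sum_i \lambda_i w_i$ with $w_i\in V$, $\lambda_i\geq 0$, $\sum_i\lambda_i=1$, so its inner product with $u$ is at most $\sup_{w\in V}u^t w < u^t v$, hence it cannot equal $v$. (The same computation shows $v$ does not even lie in the closed convex hull of $V$, so it is irrelevant whether $\conv(V)$ is closed.)

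To bound $u^t w$ for $w\in V$ I would split into two cases according to hypothesis (iii). If $\norm{w}\leq r$, then trivially $u^t w \leq \norm{w} \leq r$. If $\norm{w} > r$, then (iii) gives $\angle v\orig w \geq \alpha/2$; since $\cos$ is decreasing on $[0,\pi]$ and $\alpha/2\in(0,\pi/4)$, this yields
$$ u^t w \;=\; \norm{w}\cos(\angle v\orig w) \;\leq\; \norm{w}\cos(\alpha/2). $$
Now apply Lemma~\ref{lem:cosalpha2} to replace $\cos(\alpha/2)$ by $r/2$, and hypothesis (i) to bound $\norm{w}\leq 2+\delta$, obtaining $u^t w \leq (2+\delta)r/2 = r(1+\delta/2)$. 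Combining the two cases (and noting $r(1+\delta/2) > r$), we get $\sup_{w\in V} u^t w \leq r(1+\delta/2)$.

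It remains to choose $\delta$. By hypothesis (ii), $u^t v = \norm{v} > r+\eps$, so it suffices to have $r(1+\delta/2) < r+\eps$, i.e.\ $\delta < 2\eps/r$; one may therefore take $\delta(\eps) := \eps/r$. With this choice $\sup_{w\in V} u^t w \leq r(1+\delta/2) < r+\eps < \norm{v} = u^t v$, and the first paragraph gives $v\notin\conv(V)$.

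I do not expect a genuine obstacle here: the lemma is a packaging of the elementary separating-hyperplane argument, and its content is entirely that the quantities $r$ and $\alpha$ have been defined (via Lemma~\ref{lem:cosalpha2}) exactly so that an angular gap of $\alpha/2$ at the origin, together with the radius bound $2+\delta$ on $V$, forces every point of $V$ to have strictly smaller projection onto $u$ than $v$ does. The only minor care needed is the case split at $\norm{w}=r$ and the observation that a strict inequality is obtained, so closedness of $\conv(V)$ plays no role.
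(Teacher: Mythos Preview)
Your proof is correct and essentially identical to the paper's: both separate $v$ from $\conv(V)$ by projecting onto the direction $u=v/\norm{v}$ (the paper phrases this as assuming $v$ lies on the positive $x_1$-axis and projecting onto the first coordinate), with the same case split at $\norm{w}=r$ and the same use of Lemma~\ref{lem:cosalpha2}. The only cosmetic difference is the choice of $\delta$: the paper takes $\delta=2\eps/r$ (yielding $\pi[\conv(V)]\subseteq(-\infty,r+\eps]$ and then using $\norm{v}>r+\eps$ strictly), while you take $\delta=\eps/r$ to get a strict inequality directly.
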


\begin{proof}
Applying a suitable isometry, we can assume without loss of generality that $v = (x,0,\dots,0)$ with $x>r+\eps$. 
We consider the projection $\pi : \eR^d \to \eR$ onto the first coordinate.

For every $w \in V$ with $\norm{w} \leq r$ we of course have $\pi(w) \leq r$ as well.
For $w \in V$ with $\norm{w} > r$ we have

$$ \pi(w) = \cos\left( \angle v{\orig}w \right) \cdot \norm{w} \leq \cos(\alpha/2) \cdot (2+\delta)
= r \cdot (1+\delta/2), $$

\noindent
where we've used Lemma~\ref{lem:cosalpha2} and the specific choice of $r,\alpha$.
Having chosen $\delta := 2\eps/r$ appropriately, see that 

$$ \pi[ \conv(V) ] \subseteq (-\infty,r+\eps]. $$

\noindent 
So $x = \pi(v) \not\in \pi[ \conv(V) ]$ and in particular $v\not\in \conv(V)$ as well.
\end{proof}

\begin{lemma}\label{lem:prev}
 For all $\eps>0$, with probability $1-o_d(1)$, we have 
 
 $$ \sup_{u \in S^{d-1}} \left( \psi(u,\Vtyp^{\circ}) + \psi(-u,\Vtyp^{\circ}) \right) \leq 2 + r + \eps. $$
 
\end{lemma}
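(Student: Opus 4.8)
The plan is to estimate $\psi(u,\Vtyp^{\circ}) + \psi(-u,\Vtyp^{\circ})$ uniformly in $u$ by combining the previous three lemmas. Since $\Vtyp^{\circ} = \conv(\Ycal)$ almost surely, for any fixed $u$ we have $\psi(u,\Vtyp^{\circ}) = \sup\{\lambda \geq 0 : \lambda u \in \conv(\Ycal)\}$, i.e.\ the distance from the origin along the ray in direction $u$ to the boundary of $\conv(\Ycal)$. The key idea is that if a point $\mu u$ with $\mu$ large lies in $\conv(\Ycal)$, then by Lemma~\ref{lem:notinconv} (applied with $v = \mu u$ and $V = \Ycal$) one of its hypotheses must fail; we will arrange, on a high-probability event, that the only way a hypothesis can fail is hypothesis~(ii), which forces $\mu \leq r + \eps$.

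First I would fix $\eps > 0$ and let $\delta = \delta(\eps/2)$ be the constant supplied by Lemma~\ref{lem:notinconv} (shrinking it so $\delta < \eps$). I condition on the intersection of three high-probability events: (a) $\Ycal \subseteq B(\orig, 2+\delta)$, which holds with probability $1-o_d(1)$ by Lemma~\ref{lem:Yball2}; (b) the conclusion of Lemma~\ref{lem:angleswhp} with this $r$, namely that any two distinct $Y_1, Y_2 \in \Ycal$ with $\norm{Y_1}, \norm{Y_2} > r$ satisfy $\angle Y_1{\orig}Y_2 \in (\alpha, \pi - \alpha)$ — note $\alpha = \arcsin(r^2/4)$ matches \eqref{eq:rdef}, and $0 < r < 2$; and (c) that $\Ycal \neq \emptyset$ (indeed $\Ycal$ is infinite a.s.). On this event, suppose for contradiction that for some direction $u$ we have $\psi(u,\Vtyp^{\circ}) > r + \eps$ \emph{and} $\psi(-u,\Vtyp^{\circ}) > r + \eps$; set $v^+ := \psi(u,\Vtyp^{\circ})\, u$ and $v^- := \psi(-u,\Vtyp^{\circ})(-u)$, both in $\conv(\Ycal) = \Vtyp^{\circ}$ (the sup is attained since $\Vtyp^\circ$ is closed). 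Apply Lemma~\ref{lem:notinconv} to $v = v^+$ with $V = \Ycal$: hypothesis~(i) holds by event~(a), hypothesis~(ii) holds since $\norm{v^+} > r + \eps$, so hypothesis~(iii) must fail, i.e.\ there exists $w^+ \in \Ycal$ with $\norm{w^+} > r$ and $\angle v^+{\orig}w^+ < \alpha/2$. The same argument with $v = v^-$ produces $w^- \in \Ycal$ with $\norm{w^-} > r$ and $\angle v^-{\orig}w^- < \alpha/2$. Since $v^+$ and $v^-$ point in opposite directions, $\angle w^+{\orig}w^- \geq \angle v^+{\orig}v^- - \angle v^+{\orig}w^+ - \angle v^-{\orig}w^- > \pi - \alpha/2 - \alpha/2 = \pi - \alpha$ (using the triangle inequality for angles on the sphere). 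Also $w^+ \neq w^-$, since if $w^+ = w^-$ then this single point would make angle $< \alpha/2$ with each of two opposite directions, impossible. But $\norm{w^+}, \norm{w^-} > r$, so event~(b) forces $\angle w^+{\orig}w^- < \pi - \alpha$, a contradiction.

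Therefore, on this high-probability event, for every $u \in S^{d-1}$ at least one of $\psi(u,\Vtyp^{\circ})$, $\psi(-u,\Vtyp^{\circ})$ is at most $r + \eps$. To finish we need a crude bound on the other one: by event~(a), $\conv(\Ycal) \subseteq B(\orig, 2+\delta) \subseteq B(\orig, 2+\eps)$, so $\psi(u,\Vtyp^{\circ}) \leq 2 + \eps$ for every $u$. Hence $\psi(u,\Vtyp^{\circ}) + \psi(-u,\Vtyp^{\circ}) \leq (r+\eps) + (2+\eps) = 2 + r + 2\eps$ for all $u$ simultaneously. Replacing $\eps$ by $\eps/2$ at the outset yields the bound $2 + r + \eps$ with probability $1-o_d(1)$, as claimed.

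The main obstacle is purely bookkeeping: making sure the angle-triangle-inequality step is airtight (that $\angle v^+{\orig}v^- = \pi$ exactly, and that subtracting the two small angles legitimately gives a lower bound $> \pi - \alpha$), and confirming that Lemma~\ref{lem:notinconv}'s constant $\delta$ can be taken uniformly — it can, since it depends only on $\eps$ and $r$, not on the dimension or on $V$. Everything else is a union bound over three events each of probability $1 - o_d(1)$. There is no issue with the supremum over the uncountably many directions $u$, because the contradiction argument is run pointwise but the defining event (a)–(b)–(c) is a single event not depending on $u$.
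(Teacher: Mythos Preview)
Your proof is correct and follows essentially the same route as the paper's: condition on the high-probability events of Lemmas~\ref{lem:angleswhp} and~\ref{lem:Yball2}, use Lemma~\ref{lem:notinconv} in contrapositive form to produce points $w^\pm \in \Ycal$ of norm $>r$ making small angle with $\pm u$, and derive a contradiction with the angle bound; then combine the resulting bound $\psi(\pm u,\Vtyp^\circ)\le r+\eps$ in one direction with the crude bound $\le 2+\eps$ in the other. Your write-up even adds a couple of details the paper glosses over (explicitly noting $w^+\neq w^-$ and invoking the triangle inequality for angles), so there is nothing to fix.
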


\begin{proof}
Let $\eps>0$ be arbitrary and let $\delta=\delta(\eps)$ be as provided by Lemma~\ref{lem:notinconv}.
We assume that $\Ycal$ is such that the conclusions of Lemmas~\ref{lem:angleswhp} and~\ref{lem:Yball2} hold  
(which happens with probability $1-o_d(1)$), with 
$\min(\delta,\eps)$ taking the role of $\eps$ in Lemma~\ref{lem:Yball2}
and $r,\alpha$ as specified by~\eqref{eq:rdef}.

Aiming for a contradiction, suppose that there is some $u \in S^{d-1}$ such that both 
$\psi(u,\Vtyp^{\circ})$ and $\psi(-u,\Vtyp^{\circ})$ are $> r+\eps$.
In other words, we can find $\lambda,\mu > r+\eps$ such that the points $v := \lambda u, w := -\mu u$
satisfy $v,w \in \Vtyp^{\circ} = \conv(\Ycal)$.
There must be a $y\in\Ycal$ such that 
$\norm{y} > r$ and $\angle y\orig v\leq \alpha/2$, because otherwise
by Lemma~\ref{lem:notinconv} (with $V=\Ycal$) would imply that $v \not\in \Vtyp^{\circ}$, contradicting the 
choice of $v$.
Analogously, there is a $z\in \Zcal$ with $\norm{z}>r$ and $\angle z\orig w\leq \alpha/2$.

We now point out that $\angle v\orig w = \pi$, which gives $\angle y\orig z \geq \pi-\alpha$, contradicting 
our assumption that the conclusion of Lemma~\ref{lem:angleswhp} holds.
It follows that, for every $u \in S^{d-1}$ at least one of $\psi(u,\Vtyp^{\circ}) \leq r+\eps$ or $\psi(-u,\Vtyp^{\circ}) \leq r+\eps$
holds.
Hence,
 
$$ \sup_{u \in S^{d-1}} \left( \psi(u,\Vtyp^{\circ}) + \psi(-u,\Vtyp^{\circ}) \right) \leq (2 + \eps) + (r + \eps) = 2 + r + 2\eps. $$

\noindent
Adjusting the value of $\eps$, the lemma follows.
\end{proof}

\begin{corollary}
 For all $\eps > 0$, with probability $1-o_d(1)$, we have 
 
 $$ \width(\Vtyp) > \frac{4}{2+r} - \eps. $$
 
\end{corollary}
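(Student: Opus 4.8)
The plan is to derive this corollary directly from Lemma~\ref{lem:prev} by dualizing. The key relation is Lemma~\ref{lem:phipsi}, which says $\varphi(u,\Vtyp) = 1/\psi(u,\Vtyp^\circ)$ for every $u \in S^{d-1}$ (applicable because $\Vtyp$ is almost surely compact, convex, and contains the origin in its interior). Combining this with the identity $w(u,\Vtyp) = \varphi(u,\Vtyp) + \varphi(-u,\Vtyp)$ gives
$$ \width(\Vtyp) = \inf_{u \in S^{d-1}} \left( \frac{1}{\psi(u,\Vtyp^\circ)} + \frac{1}{\psi(-u,\Vtyp^\circ)} \right). $$
So I want a lower bound on this infimum, i.e. I want to show that for every $u$, the sum of reciprocals $\psi(u,\Vtyp^\circ)^{-1} + \psi(-u,\Vtyp^\circ)^{-1}$ is at least $4/(2+r) - \eps$.

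First I would fix $\eps > 0$ and apply Lemma~\ref{lem:prev} with a suitably small auxiliary parameter $\eps' = \eps'(\eps) > 0$ (to be chosen at the end), so that with probability $1-o_d(1)$ we have $\psi(u,\Vtyp^\circ) + \psi(-u,\Vtyp^\circ) \leq 2 + r + \eps'$ simultaneously for all $u \in S^{d-1}$. Work on this event. For a fixed $u$, write $a := \psi(u,\Vtyp^\circ)$ and $b := \psi(-u,\Vtyp^\circ)$, so $a,b > 0$ and $a + b \leq 2 + r + \eps'$. I then use the elementary convexity inequality: among positive reals with a fixed upper bound $S$ on their sum, $1/a + 1/b$ is minimized (subject to that constraint being tight and the two being equal) at $a = b = S/2$, giving $1/a + 1/b \geq 4/S$. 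Concretely, $\frac1a + \frac1b = \frac{a+b}{ab} \geq \frac{a+b}{((a+b)/2)^2} = \frac{4}{a+b} \geq \frac{4}{2+r+\eps'}$, where the first inequality is AM--GM ($ab \leq ((a+b)/2)^2$) and the second uses $a+b \leq 2+r+\eps'$ together with monotonicity of $x \mapsto 4/x$.

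Taking the infimum over $u \in S^{d-1}$ yields $\width(\Vtyp) \geq 4/(2+r+\eps')$ on this event. Finally, since $x \mapsto 4/(2+r+x)$ is continuous at $x=0$, choosing $\eps'$ small enough makes $4/(2+r+\eps') > 4/(2+r) - \eps$, which gives the claimed bound with probability $1-o_d(1)$. I do not anticipate a genuine obstacle here: the only mild subtlety is making sure the event from Lemma~\ref{lem:prev} is the \emph{uniform}-in-$u$ event (which it is, as that lemma already states a supremum over $S^{d-1}$), and confirming that $\psi(u,\Vtyp^\circ)$ and $\psi(-u,\Vtyp^\circ)$ are almost surely strictly positive so that dividing is legitimate — this holds because $\orig \in \inter \Vtyp^\circ$ almost surely, as remarked after the proposition identifying $\Vtyp^\circ$ with $\conv(\Ycal)$.
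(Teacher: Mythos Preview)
Your proof is correct and follows essentially the same approach as the paper: both apply Lemma~\ref{lem:phipsi} to write $w(u,\Vtyp)=1/\psi(u,\Vtyp^\circ)+1/\psi(-u,\Vtyp^\circ)$, use the elementary inequality $1/a+1/b\ge 4/(a+b)$ (the paper phrases this via convexity of $x\mapsto 2/x$, you via AM--GM, which are equivalent here), and then invoke Lemma~\ref{lem:prev} with an adjusted~$\eps$.
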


\begin{proof}
Applying Lemma~\ref{lem:phipsi} (and recalling that $\Vtyp$ is almost surely a polytope with $\orig \in \inter(\Vtyp)$), 
almost surely, for every $u \in S^{d-1}$ we have 

$$ \begin{array}{rcl} 
w(u,\Vtyp)
& = & \displaystyle 
\frac{1}{\psi(u,\Vtyp^{\circ})} + \frac{1}{\psi(-u,\Vtyp^{\circ})} \\[2ex]
& = & \displaystyle  
\frac12 \cdot \left( \frac{2}{\psi(u,\Vtyp^{\circ})} + \frac{2}{\psi(-u,\Vtyp^{\circ})} \right) \\[2ex]
& \geq & \displaystyle 
\frac{2}{ \frac12 \left( \psi(u,\Vtyp^{\circ}) + \psi(-u,\Vtyp^{\circ}) \right) } \\[2ex]
& = & \displaystyle  
\frac{4}{ \psi(u,\Vtyp^{\circ}) + \psi(-u,\Vtyp^{\circ}) },
\end{array} $$

\noindent
using the convexity of $x \mapsto 2/x$ (or the arithmetic-harmonic inequality).
Hence

$$ \width(\Vtyp) = \inf_{u\in S^{d-1}} w(u,\Vtyp) \geq \frac{4}{\sup_{u\in S^{d-1}}  \left(\psi(u,\Vtyp^{\circ}) + \psi(-u,\Vtyp^{\circ})\right) }. $$

The result thus follows from Lemma~\ref{lem:prev}, adjusting the value of $\eps$.
\end{proof}

\subsection{Faces of large co-dimension.\label{sec:smallface}}

Having already obtained Lemma~\ref{lem:vertub}, in order to prove Proposition~\ref{prop:radiussmallface}
it suffices to show:

\begin{lemma} Let $\eps > 0$ be fixed, and let $k=k(d)$ satisfy $d-k\to\infty$.
 With probability $1-o_d(1)$, we have $\min\left\{ \norm{x} : x \in \bigcup \Fcal_k(\Vtyp)  \right\} > 1-\eps$.
\end{lemma}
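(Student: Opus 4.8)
The plan is to merge the cube-discretization argument from the proofs of Lemmas~\ref{lem:vertlb} and~\ref{lem:vertub} with the structural observation that a point lying in a $k$-face is pinned down by $d-k$ Poisson points, a number that is large precisely because $d-k\to\infty$. The combinatorial input is standard: for a $d$-dimensional polytope $P$, every $k$-face $F$ is contained in at least $d-k$ facets of $P$ (taking a point in the relative interior of $F$, the facet-hyperplanes through that point are exactly the facet-hyperplanes containing $F$, and their common intersection is $\aff(F)$, which is $k$-dimensional, so there are at least $d-k$ of them). Applied to $\Vtyp=\bigcap_{z\in\Zcal}H_z$, and using that each facet of $\Vtyp$ lies on a hyperplane $\partial H_z$ for a unique $z\in\Zcal$ with distinct facets forcing distinct $z$, this produces distinct $z_1,\dots,z_{d-k}\in\Zcal$ with $F\subseteq\partial H_{z_i}$ for every $i$. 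Hence every $x\in F$ satisfies $\norm{x}=\norm{x-z_1}=\dots=\norm{x-z_{d-k}}$; equivalently, the sphere $\partial B(x,\norm{x})$ passes through the $d-k$ distinct Poisson points $z_1,\dots,z_{d-k}$.

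Next I would fix $\eps\in(0,1)$ (for $\eps\ge1$ the claim is vacuous, since $\orig\in\inter(\Vtyp)$ forces every face point to have positive norm) and a constant $\delta=\delta(\eps)\in(0,\eps/2)$, and put $\beta:=1-\eps+2\delta\in(0,1)$. Suppose some point $x$ of some $k$-face of $\Vtyp$ has $\norm{x}\le1-\eps$. Then $x$ lies in a cube $q\in\Qcal_\delta$, and since $\diam(q)=\delta$ we have $q\subseteq B(\orig,1-\eps+\delta)\subseteq B(\orig,1)$, so $q\in\Qcal_{\delta,1}$. If $c$ is any corner of $q$ then $\norm{x-c}\le\delta$, hence for each $z_i$ above
$$ \norm{z_i-c}\ \le\ \norm{z_i-x}+\norm{x-c}\ =\ \norm{x}+\norm{x-c}\ \le\ 1-\eps+\delta\ <\ \beta , $$
so $z_i\in B(c,\beta)$. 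Therefore the event $E$ that $\Vtyp$ has a $k$-face containing a point of norm at most $1-\eps$ is contained in the event that $|\Zcal\cap B(c,\beta)|\ge d-k$ for some corner $c$ of a cube of $\Qcal_{\delta,1}$. The number of such corners is at most $2^dN_{\delta,1}=\exp[O(d)]$ by~\eqref{eq:Nub} (with implied constant depending on $\delta$), and $|\Zcal\cap B(c,\beta)|$ is $\Po(\beta^d)$-distributed by our normalisation $\lambda=1/\kappa_d$; so the union bound gives
$$ \Pee(E)\ \le\ \exp[O(d)]\cdot\Pee\!\left(\Po(\beta^d)\ge d-k\right). $$

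To finish, note that $\beta<1$ forces $\beta^d\to0$ and hence $(d-k)/\beta^d\to\infty$. Using the Chernoff bound (Lemma~\ref{lem:chernoff}) together with the elementary inequality $H(z)=z\ln z-z+1\ge\tfrac12z\ln z$, valid for all sufficiently large $z$, we get
$$ \Pee\!\left(\Po(\beta^d)\ge d-k\right)\ \le\ \exp\!\left[-\beta^dH\!\left((d-k)/\beta^d\right)\right]\ \le\ \exp\!\left[-\tfrac12(d-k)\ln\!\left((d-k)/\beta^d\right)\right]\ \le\ \exp\!\left[-\tfrac12(d-k)\,d\,\ln(1/\beta)\right], $$
the last step using $\ln(d-k)\ge0$. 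Combining with the previous display,
$$ \Pee(E)\ \le\ \exp\!\left[\,O(d)-\tfrac12(d-k)\,d\,\ln(1/\beta)\,\right], $$
and since $\ln(1/\beta)>0$ is a fixed constant while $d-k\to\infty$, the bracket tends to $-\infty$; hence $\Pee(E)=o_d(1)$, as the lemma asks.

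The step that needs genuine care is making the count of $d-k$ points actually pay off. The lazy bound ``there is at least one Poisson point in $B(c,\beta)$'' is hopeless here, because the $\exp[O(d)]$ factor counting cubes carries an implied constant of size roughly $\ln(1/\delta)$ that swamps $\beta^d=\exp[d\ln\beta]$. One must retain the full value $d-k$ in the Poisson upper tail, so that the exponent becomes of order $d\cdot(d-k)$; this is precisely where the hypothesis $d-k\to\infty$ is used, and it is also the reason the statement fails for facets ($k=d-1$).
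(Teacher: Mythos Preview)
Your proof is correct and follows essentially the same route as the paper: the structural observation that any point on a $k$-face lies on $\partial B(x,\norm{x})$ together with $d-k$ distinct Poisson points, followed by the cube discretization $\Qcal_\delta$, the union bound over $\exp[O(d)]$ cubes, and the Chernoff tail $\Pee(\Po(\beta^d)\ge d-k)=e^{-\Omega(d(d-k))}$. You supply more detail than the paper on why a $k$-face is contained in at least $d-k$ facet hyperplanes and you unpack the Chernoff exponent explicitly, but the argument is the same.
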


The proof is essentially the same as that of Lemma~\ref{lem:vertlb}. For completeness we spell it out.

\begin{proof}
We use the observation that if $x \in F$ for some $F \in \Fcal_k(\Vtyp)$ then there must be 
$z_1,\dots,z_{d-k} \in \Zcal$ such that 
$z_1,\dots,z_{d-k} \in \partial B(x,\norm{x})$.
If there exists any such $x \in B(\orig,1-\eps)$ then there exists a cube $q \in \Qcal_\delta$ with 
$x \in q \subseteq B(\orig,1-\eps+\delta)$, where $\delta=\delta(\eps)>0$ is a constant, to be chosen sufficiently small.
If $c$ is any corner of $q$ then $B(c,1-\eps+2\delta)$ contains at least $d-k$ points of $\Zcal$.
We arrive at:

$$ \begin{array}{rcl} 
\Pee\left( \min\left\{ \norm{x} : x \in \bigcup \Fcal_k(\Vtyp)  \right\} > 1-\eps \right)
& \leq & 
N_{1,\delta} \cdot \Pee\left( \Po( (1-\eps+2\delta)^d ) \geq d-k \right) \\
& \leq & 
e^{O(d)} \cdot e^{-\Omega( d(d-k) )} = o_d(1), 
\end{array} $$

\noindent 
again using~\eqref{eq:Nub} and Lemma~\ref{lem:chernoff}.
\end{proof}

Corollary~\ref{cor:diamsmallface} follows immediately from the following observation and Proposition~\ref{prop:radiussmallface}.

\begin{lemma}\label{lem:diamconvinann}
If $F \subseteq B(\orig,1+\eps)\setminus B(\orig,1-\eps)$ is convex then $\diam(F) \leq 4\sqrt{\eps}$.
\end{lemma}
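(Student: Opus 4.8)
The plan is to show that any two points $a,b$ in the annulus $B(\orig,1+\eps)\setminus B(\orig,1-\eps)$ are at distance at most $4\sqrt{\eps}$, which gives the claimed bound on $\diam(F)$ (the convexity hypothesis on $F$ is not actually needed for this argument, though it is harmless). The key geometric observation is that the squared distance $\norm{a-b}^2 = \norm{a}^2 + \norm{b}^2 - 2 a^t b$ can only be large when the angle between $a$ and $b$ is large, and if that angle is large while both $\norm{a},\norm{b} < 1+\eps$, then $a$ and $b$ cannot both stay outside the ball $B(\orig,1-\eps)$.

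More concretely, I would argue as follows. Let $a,b \in B(\orig,1+\eps)\setminus B(\orig,1-\eps)$ and consider the midpoint $m := (a+b)/2$. Since the annulus is contained in the convex ball $B(\orig,1+\eps)$ we at least have $\norm{m} \leq 1+\eps$, but this is not enough on its own; instead I would lower-bound $\norm{m}$ using that $a,b$ lie outside $B(\orig,1-\eps)$. The cleanest route is the parallelogram identity
$$ \norm{a-b}^2 = 2\norm{a}^2 + 2\norm{b}^2 - 4\norm{m}^2 \leq 2(1+\eps)^2 + 2(1+\eps)^2 - 4\norm{m}^2 = 4(1+\eps)^2 - 4\norm{m}^2. $$
So it remains to bound $\norm{m}$ from below. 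Here one uses that $m$ lies on the segment $[a,b]$, hence in the convex set $B(\orig,1+\eps)$; that is the wrong direction. The right observation is geometric: the point of the segment $[a,b]$ nearest to $\orig$ has norm at least $\dist(\orig,[a,b])$, and since both endpoints are outside $B(\orig,1-\eps)$, the segment's closest point to the origin can dip below norm $1-\eps$ only a little — but in fact the midpoint itself need not be the closest point, so I would instead directly bound $\norm{m} \geq 1-\eps$ is false in general. The correct and simplest path: note $m$ is a convex combination, so it lies in $B(\orig,1+\eps)$; and the foot of the perpendicular from $\orig$ to the line through $a,b$ is at distance $\le \min(\norm{a},\norm{b})$...

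Let me restructure: the clean bound comes from observing that $a,b$ are at distance $\ge 1-\eps$ from $\orig$, so writing $t \in [0,1]$ for the parameter of the closest point $p = (1-t)a + tb$ on the segment, we have $\norm{p} \geq$ the distance from $\orig$ to the segment, and a short computation shows $\norm{a-b}^2 \le 4\big((1+\eps)^2 - (1-\eps)^2\big) = 16\eps$ once one checks $\norm{p}\ge 1-\eps$; but $\norm{p}\ge 1-\eps$ can fail. So the genuinely correct statement I will prove is: if $\norm{a},\norm{b}\le 1+\eps$ and $\norm{a-b}$ is large, then the midpoint $m$ has small norm, forcing $m\notin B(\orig,1-\eps)^c$ — contradiction only if $m$ itself had to lie outside, which it need not. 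The actual fix is to use that $F$ is \emph{convex} and contains both $a$ and $b$, hence contains $m$, and $m\in F\subseteq B(\orig,1+\eps)\setminus B(\orig,1-\eps)$, so $\norm{m}\ge 1-\eps$. Now the parallelogram identity gives
$$ \norm{a-b}^2 = 2\norm{a}^2+2\norm{b}^2-4\norm{m}^2 \le 2(1+\eps)^2+2(1+\eps)^2-4(1-\eps)^2 = 16\eps, $$
hence $\norm{a-b}\le 4\sqrt\eps$. Taking the supremum over $a,b\in F$ yields $\diam(F)\le 4\sqrt\eps$.

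The main (and only real) subtlety is recognizing that the hypothesis ``$F$ convex'' is exactly what lets us assert the midpoint $m$ of any two points of $F$ again lies in the annulus, so that $\norm{m}\ge 1-\eps$; without convexity the statement is false (two nearly antipodal points on the outer sphere have distance nearly $2(1+\eps)$). Everything else is the one-line parallelogram-identity computation above, so I expect no obstacle beyond getting this logical dependence straight.
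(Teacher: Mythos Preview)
Your proposal is correct and follows essentially the same approach as the paper: both arguments take two points $a,b\in F$, use convexity to place the midpoint $m=(a+b)/2$ in the annulus (so $\norm{m}\ge 1-\eps$), and then bound $\norm{a-b}$ via an elementary norm identity. Your use of the parallelogram law $\norm{a-b}^2 = 2\norm{a}^2+2\norm{b}^2-4\norm{m}^2$ is a slightly cleaner variant of the paper's cosine-rule computation (which singles out the obtuse angle at $m$), but the content is the same.
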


\begin{proof}
Let $p,q \in F$ be two arbitrary points. The midpoint $m := (p+q)/2$ also lies on $F$.
We write $\alpha := \angle p\orig m, \beta := \angle q\orig m$. Since $m$ is the middle of the 
line segment between $p$ and $q$, we have $\alpha+\beta=\pi$. One of $\alpha,\beta$ is $\geq \pi/2$.
Without loss of generality it is $\alpha$.
The cosine rule gives:

$$ \norm{p}^2 = \norm{m}^2 + \norm{p-m}^2 - 2 \norm{m}\norm{p-m}\cos\alpha
\geq \norm{m}^2 + \norm{p-m}^2. $$

\noindent
The assumptions on $\eps, F$ now give:

$$ \norm{p-m}^2 \leq \norm{p}^2 - \norm{m}^2 
\leq (1+\eps)^2 - (1-\eps)^2 = 4\eps. $$

Hence 

$$\norm{p-q} = 2\norm{p-m} \leq 4\sqrt{\eps}. $$

\noindent
Since $p,q \in F$ were arbitrary, this shows that $\diam(F) \leq 4\sqrt{\eps}$.
\end{proof}

\subsection{Faces of constant co-dimension\label{sec:regsimpl}}

In this section, we will prove Theorems~\ref{thm:diambigface} and~\ref{thm:fass} and Proposition~\ref{prop:Nkeps}.
Before we can start in earnest, we need some more definitions and 
preliminary observations.

For the remainder of the section, we fix $k \in \eN$.
For notational convenience, we'll write:

$$ v_k := \left(\frac{(k+1)^{k+1}}{k^{k}}\right)^{1/2}, \hspace{8ex} \ell_k := \left(\frac{2(k+1)}{k}\right)^{1/2}. $$

\noindent
As mentioned earlier, $v_k$ equals $k!$ times the volume and $\ell_k$ equals side-length of 
a regular simplex inscribed in $S^{k-1}$.
We leave the elementary considerations verifying this to the reader.  
(Computing $v_k$ for instance occurs as an exercise with a difficulty rating of 2
in Section 13 of~\cite{MatousekDiscrGeom} and 
the value of $v_k$ is simply stated in Section 49 of~\cite{FejestothRegFig}, without further elaboration.
Determining $\ell_k$ is even easier, at least in the opinion of the 
authors.)

Also for notational convenience, we write for $u_1,\dots,u_k \in\eR^k$:

\begin{equation}\label{eq:Ddef} 
D(u_1,\dots,u_k) := \left|\det\left(u_1|\ldots|u_k\right)\right|, 
\end{equation}

\noindent
where $(u_1|\ldots|u_k)$ denotes the $k\times k$ matrix whose columns are $u_1,\dots,u_k$.
(Note we consider $k$ vectors in $k$-dimensional Euclidean space.)
We denote by $P(u_1,\dots,u_k)$ the {\em parallelopiped spanned by $u_1,\dots,u_k \in \eR^k$}. That is:

$$ P(u_1,\dots,u_k) := \{ \lambda_1 u_1 + \dots + \lambda_k u_k : 0 \leq \lambda_1 \leq 1, \dots, 0\leq \lambda_k \leq 1 \}. $$

\noindent
A standard elementary fact states that:

\begin{equation}\label{eq:elem} \begin{array}{rcl} 
D(u_1,\dots,u_k)
& = & \vol( P(u_1,\dots,u_k) ) \\
& = & k! \cdot \vol\left( \conv\left(\left\{\orig,u_1,\dots,u_k\right\}\right)\right). 
  \end{array} \end{equation}

\noindent
(This last identity is for instance stated as equation (7.6) in~\cite{SchneiderWeil}.)

For notational convenience, for $x_1,\dots,x_k \in\eR^d$, we write 

$$ \rho(x_1,\dots,x_k) := \inf\left\{ r > 0 : 
\text{$\exists x$ such that $\orig,x_1,\dots,x_k \in \partial B(x,r)$}\right\}. $$

\noindent
(Note that now the ambient dimension $d$ is not necessarily the same as the number of points $k$.)
We collect some observations on $\rho$ needed in the sequel.

\begin{lemma}\label{lem:rho}
\text{ }
\begin{enumerate}
 \item\label{itm:ballexist} If $x_1,\dots,x_k \in \eR^d$ are linearly independent, then $0<\rho(x_1,\dots,x_k)<\infty$ and
 there is a unique open ball $B$ of radius $\rho(x_1,\dots,x_k)$ such that $\orig,x_1,\dots,x_k \in \partial B$.
 \item\label{itm:rhocont} The map $(x_1,\dots,x_k) \mapsto \rho(x_1,\dots,x_k)$ is continuous on the 
 set $I_{k,d} \subseteq \eR^{kd}$ given by
 
 $$ I_{k,d} := \left\{  (x_1,\dots,x_k) \in \eR^d \times \dots\times\eR^d : x_1,\dots,x_k
 \text{ are linearly independent}\right\}. $$ 
 
\end{enumerate}
\end{lemma}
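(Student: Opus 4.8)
\textbf{Proof plan for Lemma~\ref{lem:rho}.}

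The plan is to treat both parts by reducing the condition ``$\orig, x_1,\dots,x_k$ lie on a common sphere $\partial B(x,r)$'' to an explicit linear system for the center $x$. The key observation is that a point $x$ is equidistant from $\orig$ and from $x_i$ precisely when $x$ lies on the bisector hyperplane $\{y : x_i^t y = \norm{x_i}^2/2\}$ (this is the boundary of $H_{x_i}$ from~\eqref{eq:Hxdef}, shifted to an equality). So the set of admissible centers is the affine subspace $L := \bigcap_{i=1}^k \{y : x_i^t y = \norm{x_i}^2/2\}$, and the common radius for a center $y \in L$ is simply $r(y) = \norm{y}$ (the distance to $\orig$). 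Thus $\rho(x_1,\dots,x_k) = \inf\{\norm{y} : y \in L\} = \dist(\orig, L)$, the distance from the origin to this affine subspace. First I would record this reformulation cleanly, since it makes everything else transparent.

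For part~\ref{itm:ballexist}: when $x_1,\dots,x_k$ are linearly independent, the $k\times d$ coefficient matrix $(x_1|\dots|x_k)^t$ has full row rank $k$, so $L$ is a nonempty affine subspace of dimension $d-k$; hence $\dist(\orig,L)$ is attained at a unique point $y^\star \in L$ (the orthogonal projection of $\orig$ onto $L$), giving existence and uniqueness of the ball. It remains to check $0 < \rho < \infty$: finiteness is immediate since $\rho = \norm{y^\star} < \infty$; and $\rho > 0$ because $\orig \notin L$ — indeed $\orig \in L$ would force $0 = x_i^t\orig = \norm{x_i}^2/2$ for all $i$, i.e.\ some $x_i = \orig$, contradicting linear independence. (One should also note the ball must be \emph{open} in the statement; that is just a matter of convention since $\orig, x_i$ lie on the boundary sphere either way.) The uniqueness claim is what ``$x_1,\dots,x_k$ linearly independent'' is really buying us.

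For part~\ref{itm:rhocont}: I would give a formula for $y^\star$ and read off continuity. Writing $A = A(x_1,\dots,x_k)$ for the $k\times d$ matrix with rows $x_i^t$ and $b = b(x_1,\dots,x_k) = \tfrac12(\norm{x_1}^2,\dots,\norm{x_k}^2)^t$, the subspace is $L = \{y : Ay = b\}$, and the minimum-norm solution is the standard least-norm formula $y^\star = A^t(AA^t)^{-1} b$. On the set $I_{k,d}$ the matrix $AA^t$ is invertible (since $A$ has full row rank there), and all of $A \mapsto A$, $A \mapsto b$, $A \mapsto AA^t$, matrix inversion, and matrix multiplication are continuous, so $(x_1,\dots,x_k) \mapsto y^\star$ is continuous on $I_{k,d}$, and therefore so is $\rho = \norm{y^\star}$. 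I expect the only mild subtlety — the ``main obstacle'', though it is minor — to be justifying that $\dist(\orig,L)$ genuinely equals $\norm{y^\star}$ with $y^\star$ given by that pseudoinverse formula; this is a classical fact about minimum-norm solutions of underdetermined consistent linear systems (the residual $\orig - y^\star = -y^\star$ is orthogonal to the direction space $\ker A$, since $y^\star \in (\ker A)^\perp = \operatorname{row}(A)$), which I would state with a one-line verification rather than belabor. Everything else is bookkeeping.
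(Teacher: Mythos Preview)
Your proposal is correct and follows essentially the same route as the paper: both reduce the equidistance condition to the linear system $Ax=b$ with $A$ having rows $x_i^t$ and $b_i=\norm{x_i}^2/2$, identify the minimum-norm solution $y^\star=A^t(AA^t)^{-1}b$, and read off continuity of $\rho=\norm{y^\star}$ from the explicit formula on the set where $AA^t$ is invertible. Your framing via $\rho=\dist(\orig,L)$ and your explicit check that $\rho>0$ (which the paper leaves implicit) are minor cosmetic differences, not a different argument.
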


\begin{proof}
A ball that has $\orig, x_1,\dots,x_k$ on its boundary has to be of the form $B(x,\norm{x})$ with $x$ satisfying

$$ \norm{x} = \norm{x-x_1} = \dots = \norm{x-x_k}. $$

\noindent
Squaring and rewriting the squared norm in terms of the inner product gives

$$ \langle x,x\rangle =  \langle x-x_i,x-x_i\rangle = 
\langle x,x\rangle - 2\langle x_i,x\rangle + \langle x_i,x_i\rangle \quad (i=1,\dots,k). $$

\noindent 
Reorganising gives:

$$ \langle x_i,x \rangle = \norm{x_i}^2/2 \quad (i=1,\dots,k). $$

\noindent
Writing $A$ for the $k\times d$ matrix whose $i$-th row is $x_i^t$ and $b \in \eR^k$ for the 
vector whose $i$-th entry equals $\norm{x_i}^2/2$, we see that $x$ must satisfy

\begin{equation}\label{eq:Axisb} 
A x = b,  
\end{equation}

\noindent
and any $x$ satisfying this equation defines a ball of the sought form.
If $x_1,\dots,x_k$ are linearly independent then the $k\times k$ matrix $A A^t$ is non-singular. 
In particular $(A A^t)^{-1}$ is well defined. So if $x_1,\dots,x_k$ are linearly independent then

\begin{equation}\label{eq:xsol} 
x := A^t(AA^t)^{-1} b
\end{equation}

\noindent 
solves~\eqref{eq:Axisb}.
Note that $x$ is a linear combination of the columns $A^t$. 
In other words, it lies in the linear hull $\Lcal(\{x_1,\dots,x_k\})$ of $x_1,\dots,x_k$. 

Now let $y$ be an arbitrary solution of~\eqref{eq:Axisb}.
We have $A (y-x) = \orig$, so that $\langle y-x, x_i\rangle = 0$ for $i=1,\dots, k$.
But then we also have $\langle y-x,x\rangle = 0$.
Pythagoras's theorem tells us that

$$ \norm{y}^2 = \norm{x}^2 + \norm{y-x}^2. $$

\noindent
So $x$ is the (unique) solution with the smallest possible norm. This proves~\ref{itm:ballexist}.

To see~\ref{itm:rhocont} we note that, using the Leibniz formula for the determinant 
and Cramer's rule for the matrix inverse, each individual coordinate of $x = x(x_1,\dots,x_k)$
can be written as $P(x_1,\dots,x_k)/Q(x_1,\dots,x_k)$ where $P,Q$ are polynomials in the 
coordinates of $x_1,\dots,x_k$ and specifically $Q = \det(AA^t)$.
The polynomial $Q$ is non-zero on $I_{kd}$, so that $x$ is a continuous function of $x_1,\dots,x_k$ on $I_{kd}$.
So $\rho(x_1,\dots,x_k)
= \norm{x}$ is also continuous on $I_{k,d}$.
\end{proof}

The unique ball that has $\orig,x_1,\dots,x_k$ on its boundary, guaranteed to exist 
when $x_1,\dots,x_k$ are linearly independent, 
will be denoted by $B(x_1,\dots,x_k)$.
We shall only consider $B(x_1,\dots,x_k)$ for $x_1,\dots,x_k$ distinct elements of the Poisson point process $\Zcal$.
As long as $k \leq d$, almost surely any set of $k$ distinct points of $\Zcal$ is linearly independent.

Another observation we'll use is that for $\mu>0$ and $u_1,\dots,u_k \in \eR^k$ we have 

\begin{equation}\label{eq:rescale} \rho(\mu u_1, \dots, \mu u_k ) = \mu \rho(u_1,\dots,u_k), \quad 
D(\mu u_1,\dots,\mu u_k)
= \mu^k D(u_1,\dots,u_k). 
\end{equation}

We set:

\begin{equation}\label{eq:Xdef} 
\Xcal(r) := \left\{ (z_1,\dots,z_k) \in \Zcal^k : \begin{array}{l} z_1,\dots,z_k \text{ are distinct, and } \\
\rho(z_1,\dots,z_k) \leq r \end{array} \right\}, \quad X(r) := \left|\Xcal(r)\right|,
\end{equation}

\noindent
Applying the Mecke formula we obtain

\begin{equation}\label{eq:Xmecke}
\Ee X(r) = \lambda^k \int_{\eR^d}\dots\int_{\eR^d} 1_{\{\rho(x_1,\dots,x_k) \leq r\}}  \dd x_1\dots\dd x_k. 
\end{equation}

\noindent
The following approximations will be reused a few times.

\begin{lemma}\label{lem:Iint} 
$\text{ }$
\begin{enumerate}
 \item\label{itm:Iint} For $r>0$ fixed and $d\to\infty$, we have $\displaystyle\sqrt[d]{\Ee X(r)} = r^k v_k + o_d(1)$.
 \item\label{itm:Iint2} For $r>1/\sqrt{2}$ fixed and $d\to\infty$, we have 
 $\displaystyle\Ee X(r)^2 = (1+o_d(1)) \cdot \left(\Ee X(r)\right)^2$.
\end{enumerate}
\end{lemma}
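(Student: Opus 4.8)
The plan is to prove both parts by combining the Mecke formula with the linear Blaschke--Petkantschin formula.

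\textbf{Part (i).} Starting from~\eqref{eq:Xmecke}, I would apply the linear Blaschke--Petkantschin formula (see e.g.~\cite{SchneiderWeil}) to the $k$-fold integral: since almost every $k$-tuple $(x_1,\dots,x_k)$ is linearly independent and hence spans a $k$-dimensional linear subspace through $\orig$, this rewrites $\Ee X(r)$ as $\lambda^k b_{d,k}\int_{(\eR^k)^k}1_{\{\rho(x_1,\dots,x_k)\le r\}}D(x_1,\dots,x_k)^{d-k}\,\dd x_1\cdots\dd x_k$, with $D$ as in~\eqref{eq:Ddef} and $b_{d,k}$ the usual Blaschke--Petkantschin constant (a ratio of surface areas of low-dimensional unit spheres). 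Using~\eqref{eq:rescale} to pull out the scaling in $r$, this equals $\lambda^k b_{d,k} r^{kd} J_d$ with $J_d:=\int_{(\eR^k)^k}1_{\{\rho\le 1\}}D^{d-k}$. The key point is that $\lambda^k b_{d,k}=\kappa_d^{-k}b_{d,k}$ is only polynomial in $d$ (this follows from $\omega_d=d\kappa_d$ and standard Gamma-function asymptotics), so $\sqrt[d]{\lambda^k b_{d,k}}\to 1$ and it remains to show $J_d^{1/d}\to v_k$. For the upper bound: on $\{\rho\le 1\}$ the $k+1$ points $\orig,x_1,\dots,x_k$ lie on a common sphere of radius at most $1$, so by~\eqref{eq:elem} together with the classical fact that the regular simplex maximises volume among simplices inscribed in a Euclidean ball, $D(x_1,\dots,x_k)\le v_k$ there; since $\{\rho\le 1\}$ is a bounded set, $J_d\le v_k^{d-k}\vol(\{\rho\le 1\})$ and $\limsup J_d^{1/d}\le v_k$. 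For the matching lower bound: fix $\orig,x_1^*,\dots,x_k^*$ a regular simplex inscribed in a sphere of radius $1-\eta$, so $\rho(x^*)<1$ and $D(x^*)=(1-\eta)^k v_k$; by the continuity statement of Lemma~\ref{lem:rho} there is a neighbourhood of $x^*$ of positive Lebesgue measure on which $\rho<1$ and $D>(1-\eta)^k v_k-\eta$, whence $J_d^{1/d}\ge (1-\eta)^k v_k-\eta-o_d(1)$, and letting $\eta\downarrow 0$ finishes part (i).

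\textbf{Part (ii).} Writing $X(r)$ as a sum over ordered $k$-tuples of distinct points of $\Zcal$, I would expand $\Ee X(r)^2$ and group the ordered pairs of tuples according to the size $j\in\{0,1,\dots,k\}$ of the overlap of their underlying point sets. By the Mecke formula the $j=0$ contribution equals exactly $(\Ee X(r))^2$, while for $1\le j\le k$ the contribution is a positive combinatorial constant (depending only on $j$ and $k$) times $\lambda^{2k-j}\int_{(\eR^d)^{2k-j}}1_{\{\rho(x_1,\dots,x_k)\le r\}}\,1_{\{\rho(x_1,\dots,x_j,x_{k+1},\dots,x_{2k-j})\le r\}}\,\dd x$. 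Applying Blaschke--Petkantschin to these $2k-j$ integration points and repeating the prefactor analysis of part (i) shows that this contribution is of order $\bigl(r^{2k-j}\nu_j\bigr)^{d}\cdot d^{O(1)}$, where $\nu_j$ is the maximum of $D(x_1,\dots,x_{2k-j})$ over configurations in $\eR^{2k-j}$ satisfying both circumradius-at-most-$1$ constraints. For $j=k$ the two tuples have the same point set, the contribution is simply $\Theta(\Ee X(r))$, and this is $o\bigl((\Ee X(r))^2\bigr)$ because $\Ee X(r)\to\infty$ by part (i). For $1\le j\le k-1$ it suffices to verify the strict inequality $r^{2k-j}\nu_j<(r^k v_k)^2$, i.e.\ $\nu_j<r^j v_k^2$; granting this, $\Ee X(r)^2=(\Ee X(r))^2+\sum_{j=1}^{k}o\bigl((\Ee X(r))^2\bigr)=(1+o_d(1))(\Ee X(r))^2$.

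\textbf{The main obstacle} I expect is the geometric estimate $\nu_j<r^j v_k^2$ for $1\le j\le k-1$, together with pinning down how large $r$ must be for it (and for $\Ee X(r)\to\infty$) to hold — this is where the hypothesis $r>1/\sqrt2$ is invoked. Concretely, one has two $k$-simplices $\conv\{\orig,x_1,\dots,x_j,x_{j+1},\dots,x_k\}$ and $\conv\{\orig,x_1,\dots,x_j,x_{k+1},\dots,x_{2k-j}\}$, each with circumradius at most $1$, sharing the $j$-simplex $\conv\{\orig,x_1,\dots,x_j\}$, and one must bound the $(2k-j)$-volume of the convex hull of their union. I would diagonalise the Gram matrix of $x_1,\dots,x_{2k-j}$ in blocks adapted to the shared face and its two orthogonal complements, and optimise first over the ``size'' of the shared face: the degenerate orthogonal-join configuration formally realises $\nu_0=v_k^2$, and the content of the estimate is that a genuinely $j$-dimensional shared face strictly loses volume. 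Even the elementary instance $k=2$, $j=1$ is indicative — there the optimisation is a one-variable calculus problem forcing a specific length of the shared edge — and it suggests that, after taking $d$-th roots, these losses leave ample room for the claimed range of $r$.
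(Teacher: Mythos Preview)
Your Part~(i) is essentially identical to the paper's proof: Mecke plus linear Blaschke--Petkantschin, the prefactor is polynomial in $d$, the upper bound on $J_d^{1/d}$ uses Lemma~\ref{lem:det}\ref{itm:det}, and the lower bound uses continuity of $\rho$ on a neighbourhood of a slightly shrunken regular simplex.

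For Part~(ii), your decomposition by overlap size and the reduction to the inequality $\nu_j < r^j v_k^2$ are exactly what the paper does (with $\ell=k-j$). The gap is precisely where you say it is: you do not actually prove this inequality, and the Gram-matrix optimisation you sketch is left as a heuristic. The paper resolves it by a trick you are missing, namely \emph{relaxing one of the two circumradius constraints}. If $\rho(x_1,\dots,x_j,x_{k+1},\dots,x_{2k-j})\le 1$ then a fortiori $\rho(x_{k+1},\dots,x_{2k-j})\le 1$, so
\[
\nu_j \;\le\; \max\Bigl\{\, D(u_1,\dots,u_{2k-j}) : \rho(u_1,\dots,u_k)\le 1,\ \rho(u_{k+1},\dots,u_{2k-j})\le 1 \,\Bigr\}.
\]
The right-hand side now involves two \emph{disjoint} blocks of points, and the paper bounds it by $v_k\,v_{k-j}$ via the product-of-heights formula $D(u_1,\dots,u_m)=\prod_{i=1}^m \dist\bigl(u_i,\Lcal(\{u_1,\dots,u_{i-1}\})\bigr)$: the product over $i\le k$ is at most $v_k$, and the product over $i>k$ is at most the corresponding product with the first $k$ points deleted from the linear hull, hence at most $v_{k-j}$ (this is Lemma~\ref{lem:det}\ref{itm:detgen}). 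With this in hand, $\nu_j<r^j v_k^2$ reduces to $v_{k-j}<r^j v_k$, equivalently $r^{k-j}v_{k-j}<r^k v_k$, and the paper checks the elementary inequality $v_\ell/v_{\ell-1}\ge\sqrt{2}$ for all $\ell\ge 2$ (with $v_1=2$), so $\ell\mapsto r^\ell v_\ell$ is strictly increasing whenever $r>1/\sqrt{2}$. This is also what makes the threshold $1/\sqrt{2}$ appear, and simultaneously gives $r^k v_k>1$ so that $\Ee X(r)\to\infty$ and the full-overlap term $j=k$ is negligible, as you noted.
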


\noindent
The proof of Lemma~\ref{lem:Iint} makes use of the following observations.

\begin{lemma}\label{lem:det}
$\text{ }$
\begin{enumerate}
\item\label{itm:det} For every $k \in \eN$, we have 
$$
\max_{u_1,\dots,u_k \in \eR^k, \atop \rho(u_1,\dots,u_k)\leq 1} 
D(u_1,\dots,u_k)
= v_k. $$

\noindent
Moreover, the maximum is attained by $u_1,\dots,u_k \in \eR^k$ 
if and only if $\orig,u_1,\dots,u_k$ form the vertices of a regular simplex of side-length $\ell_k$.

\item\label{itm:detgen} For every $k,\ell \in \eN$, we have 

$$
\max_{{\tiny \begin{array}{l} u_1,\dots,u_{k+\ell} \in \eR^{k+\ell},\\\rho(u_1,\dots,u_{k})\leq 1,\\
\rho(u_{k+1},\dots,u_{k+\ell})\leq 1\end{array}}} D(u_1,\dots,u_{k+\ell})
\leq  v_k\cdot v_\ell. $$

\end{enumerate}
\end{lemma}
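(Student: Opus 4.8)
The plan is to first prove part~\ref{itm:det}, the characterization of the extremal configuration, and then deduce part~\ref{itm:detgen} as a corollary by a block/subdeterminant argument. For part~\ref{itm:det}, recall from~\eqref{eq:elem} that $D(u_1,\dots,u_k) = k! \cdot \vol(\conv(\{\orig,u_1,\dots,u_k\}))$, so maximizing $D$ over tuples with $\rho(u_1,\dots,u_k)\le 1$ is the same as maximizing the volume of a $k$-simplex having one vertex at $\orig$ and whose circumradius (more precisely, the radius of the smallest ball through all $k+1$ vertices) is at most $1$. Since scaling all vectors by $\mu$ scales $\rho$ by $\mu$ and $D$ by $\mu^k$ (equation~\eqref{eq:rescale}), and since increasing the radius only helps, the maximum is attained with the circumscribed ball having radius exactly $1$; by the uniqueness in Lemma~\ref{lem:rho}\,\ref{itm:ballexist} and translation/rotation invariance we may take this ball to be $B(\orig,1)$ after recentering — but we must be careful, because $\orig$ is forced to be a vertex. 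The clean way is: the quantity being maximized equals $k!$ times the maximal volume of a simplex inscribed in a ball of radius $1$ (one vertex pinned, but pinning one vertex on the sphere costs nothing by symmetry once we recenter), and the classical fact — which I would cite to~\cite{FejestothRegFig} and~\cite{MatousekDiscrGeom} as the excerpt already flags — is that the maximal-volume simplex inscribed in $S^{k-1}$ is the regular one, with volume $v_k/k!$ and edge length $\ell_k$. So I would either cite this or include the short Lagrange-multiplier / symmetrization argument: fix all vertices but one on the sphere; the locus of the free vertex maximizing volume is the point of the sphere farthest from the affine hull of the others, which forces all edge lengths from that vertex to be equal; iterating (or invoking that a critical configuration must be "balanced") gives regularity, and a compactness/uniqueness argument pins down that this is the unique maximizer up to isometry.

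The subtlety in part~\ref{itm:det} is the interplay between "circumscribed ball through $\orig$ and the $u_i$" and the standard statement about simplices inscribed in a sphere. I would handle it as follows: given any $u_1,\dots,u_k$ with $\rho := \rho(u_1,\dots,u_k) \le 1$, let $B(c,\rho)$ be the ball from Lemma~\ref{lem:rho} with $\orig, u_1,\dots,u_k$ on its boundary. Then $\orig, u_1,\dots,u_k$ are $k+1$ points on a sphere of radius $\rho \le 1$, so $\conv(\{\orig,u_1,\dots,u_k\})$ is a $k$-simplex inscribed in a ball of radius at most $1$, hence has volume at most that of the regular simplex inscribed in $S^{k-1}$, i.e.\ at most $v_k/k!$, with equality iff all $k+1$ vertices form a regular simplex of circumradius $1$; since the circumradius of a regular $k$-simplex with edge length $\ell$ is $\ell\sqrt{k/(2(k+1))}$, setting this equal to $1$ gives $\ell = \ell_k$. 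This yields both the value $v_k$ and the equality characterization. For part~\ref{itm:detgen}, given $u_1,\dots,u_{k+\ell} \in \eR^{k+\ell}$ with $\rho(u_1,\dots,u_k)\le 1$ and $\rho(u_{k+1},\dots,u_{k+\ell})\le 1$, write the $(k+\ell)\times(k+\ell)$ matrix $M=(u_1|\cdots|u_{k+\ell})$ in block form and use the Cauchy–Binet-type bound: $|\det M| \le \vol_k(\text{parallelopiped of the first } k \text{ columns}) \cdot \vol_\ell(\text{parallelopiped of the projections of the last } \ell \text{ columns onto the orthogonal complement of the span of the first } k)$. The first factor is $D(u_1,\dots,u_k) \le v_k$ by part~\ref{itm:det}. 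For the second factor, orthogonal projection onto a subspace does not increase $\rho$ of a point configuration that already lies "relative to $\orig$" — projection is a contraction and $\orig$ maps to $\orig$, so the projected points still admit a common circumsphere through $\orig$ of radius $\le \rho(u_{k+1},\dots,u_{k+\ell}) \le 1$ — hence that factor is at most $v_\ell$ by part~\ref{itm:det} applied in dimension $\ell$. Multiplying gives $|\det M| \le v_k v_\ell$.

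The main obstacle I anticipate is the claim that orthogonal projection does not increase $\rho$; this needs a clean justification, since $\rho$ is defined via the minimal circumradius and it is not a priori obvious that contractions behave well with respect to it. The safe route: for a contraction $T$ fixing $\orig$ (here $T$ = orthogonal projection composed with an isometry to $\eR^\ell$), and points $x_1,\dots,x_m$ with circumsphere $\partial B(c,\rho)$ through $\orig,x_1,\dots,x_m$, one checks that $T$ maps these $m+1$ points into a ball of radius $\le \rho$ through $\orig$ by a direct estimate on pairwise distances plus Jung-type reasoning — or, more simply, one avoids $\rho$ of the projection altogether and instead bounds the second parallelopiped factor directly by the diameter/circumradius of the original configuration $\{u_{k+1},\dots,u_{k+\ell}\}$, using that a parallelopiped spanned by vectors lying in a ball of radius $\le 2$ (diameter bound from $\rho\le 1$) has volume controlled by the same extremal simplex computation. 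If the projection argument proves fiddly, the fallback is to prove part~\ref{itm:detgen} by the Hadamard-type inequality $D(u_1,\dots,u_{k+\ell}) \le \prod \norm{u_i^\perp}$ after Gram–Schmidt and regroup — but the block-volume factorization above is cleanest and I expect it to go through once the contraction-does-not-increase-$\rho$ lemma (a one-paragraph computation) is in place.
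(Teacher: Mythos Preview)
Your approach to part~\ref{itm:det} is correct and essentially matches the paper's: reduce to the classical fact (stated in the paper as Proposition~\ref{prop:slepian}) that the regular simplex uniquely maximizes volume among simplices inscribed in the unit sphere.

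For part~\ref{itm:detgen}, however, your primary argument has a genuine gap: the claim that orthogonal projection does not increase $\rho$ is \emph{false}. Take $k=1$, $\ell=2$, and in $\eR^3$ set $u_1=(0,0,1)$ (so $\rho(u_1)=\tfrac12\le 1$) and, for small $\epsilon>0$ and $a:=\sqrt{1-\epsilon^2}$, set $u_2=(a,\epsilon,1)$, $u_3=(-a,\epsilon,1)$. Then $\orig,u_2,u_3$ all lie on the unit sphere centred at $(0,0,1)$, so $\rho(u_2,u_3)\le 1$. The projection $\pi$ onto $\linhull(u_1)^\perp=\{z=0\}$ sends $u_2,u_3$ to $(a,\epsilon),(-a,\epsilon)$, and a direct computation gives that the circumradius of the triangle $\orig,\pi u_2,\pi u_3$ in $\eR^2$ equals $1/(2\epsilon)$, which is arbitrarily large. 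Your Jung-type fallback does not rescue this: Jung's theorem controls the radius of the smallest \emph{enclosing} ball, not the circumsphere through prescribed points, and the ``ball of radius $\le 2$'' route loses a factor of $2^\ell$ in the bound.

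The clean fix --- which is exactly what the paper does, and coincides with your Gram--Schmidt fallback once spelled out --- is to bound the second factor in your block decomposition not by controlling $\rho$ of the projected vectors, but via the trivial contraction inequality
\[
\prod_{j=1}^\ell \dist\big(u_{k+j},\Lcal(\{u_1,\dots,u_{k+j-1}\})\big)
\ \le\
\prod_{j=1}^\ell \dist\big(u_{k+j},\Lcal(\{u_{k+1},\dots,u_{k+j-1}\})\big),
\]
obtained by simply dropping $u_1,\dots,u_k$ from each span. The right-hand side is the $\ell$-dimensional volume of the parallelopiped spanned by the \emph{original} vectors $u_{k+1},\dots,u_{k+\ell}$; since $\rho$ is invariant under orthogonal transformations, one rotates these into $\eR^\ell\times\{0\}^k$ and applies part~\ref{itm:det} directly to get the bound $v_\ell$.
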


Before we get to the proof of Lemma~\ref{lem:Iint}, we first deal with Lemma~\ref{lem:det}.
We make use of a ``folklore'' fact about simplices inscribed in the unit ball.

\begin{proposition}\label{prop:slepian}
Among all simplices inscribed into the unit sphere, only the regular simplices maximize the volume. More precisely, for
$w_0,\dots,w_k \in S^{k-1}$ we have 

$$ \vol\left(\conv\left(\left\{w_0,\dots,w_k\right\}\right)\right) \leq v_k/k!, $$

\noindent
with equality if and only if $\norm{w_i-w_j}=\ell_k$ for all $0\leq i<j\leq k$.
\end{proposition}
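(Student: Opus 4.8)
The plan is to reduce the problem to a known extremal result and then verify the uniqueness clause by hand. The cleanest route I would take is as follows. First I would recall the classical fact (going back to work on the isodiametric/inscribed-simplex problem, attributed to Slepian in the probabilistic literature on Gaussian simplices) that among all $(k+1)$-tuples $w_0,\dots,w_k\in S^{k-1}$, the expression $\vol(\conv\{w_0,\dots,w_k\})$ is maximised precisely by the vertex sets of regular inscribed simplices. Since the problem is invariant under the compact group $O(k)$ and the configuration space $(S^{k-1})^{k+1}$ is compact, a maximiser exists; the content is that it must be regular. If I did not want to cite this as a black box, I would derive it via the following Lagrange-multiplier / symmetrisation argument.

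First I would fix a maximising configuration $w_0,\dots,w_k$ (which must in particular be affinely independent, else the volume is zero and cannot be maximal). Write the volume via~\eqref{eq:elem}: after translating $w_0$ to the origin, $k!\vol = D(w_1-w_0,\dots,w_k-w_0)=|\det M|$ where the rows of $M$ are $w_i-w_0$. The key observation is that the gradient of $\log|\det M|$ with respect to $w_i$ is, up to sign, the $i$-th row of $(M^{-1})^t$ (plus a contribution through $w_0$), and stationarity on the sphere forces this gradient to be parallel to $w_i$ for each $i$. Geometrically this says that each $w_i$ is orthogonal to the facet of the simplex opposite $w_i$ — i.e. the line from the circumcenter (the origin of $S^{k-1}$) to $w_i$ is perpendicular to the opposite facet. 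A standard elementary argument then shows that a simplex whose circumcenter has this property for every vertex must be regular: the circumcenter coincides with the centroid, and equidistance from all vertices combined with the orthogonality of medians to opposite facets pins down all edge lengths to the common value $\ell_k$. Finally, the edge length of a regular simplex inscribed in $S^{k-1}$ is computed to be $\ell_k=\sqrt{2(k+1)/k}$ and its volume to be $v_k/k!$ by the elementary computation already referenced in the paper (place the $k+1$ vertices symmetrically, e.g.\ as the standard-basis points in $\eR^{k+1}$ projected appropriately).

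The main obstacle I anticipate is the uniqueness clause rather than the bound itself. The bound follows from compactness plus the stationarity analysis (or simply from citing the known result); but ruling out non-regular maximisers requires showing that the first-order condition ``each vertex-to-circumcenter segment is perpendicular to the opposite facet'' has only the regular simplex as solution — one must argue there are no ``spurious'' critical configurations that also achieve the maximum. I would handle this by the median/centroid rigidity argument sketched above, or alternatively by an averaging trick: if $w_0,\dots,w_k$ is any maximiser, average $\vol$ over a random relabelling/reflection and use strict concavity of $\log\det$ along the relevant directions to conclude that any deviation from regularity strictly decreases the volume. This strict-concavity step is where the real work lies; everything else is bookkeeping with determinants and the already-granted identities~\eqref{eq:elem} and~\eqref{eq:Ddef}.
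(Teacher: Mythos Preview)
Your proposal is correct, but note that the paper does not actually prove this proposition: it simply cites \cite{Slepian69}, \cite{Tanner74}, and the short argument at the start of Lemma~13.2.2 in \cite{MatousekDiscrGeom}. So there is no ``paper's own proof'' to compare against; you are supplying an argument where the authors chose to import one.

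Your Lagrange-multiplier route is sound. One remark: the step you flag as the ``main obstacle'' --- passing from the first-order condition to regularity --- is in fact cleaner than your sketch suggests, and you do not need any concavity or averaging trick. The stationarity condition says that for each $i$ the vector $w_i$ is orthogonal to the affine hull of $\{w_j:j\neq i\}$, i.e.\ $w_i\cdot(w_j-w_l)=0$ for all $j,l\neq i$. Hence $w_i\cdot w_j$ takes a common value $c_i$ for all $j\neq i$; by symmetry $c_i=w_i\cdot w_j=c_j$, so all off-diagonal inner products are equal. Since $\norm{w_i}=1$ for all $i$, all pairwise distances $\norm{w_i-w_j}$ are equal and the simplex is regular. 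Thus every non-degenerate critical configuration is already regular, and the global maximiser (which exists by compactness and is non-degenerate) is regular with no further work. The ``spurious critical points'' you worry about simply do not exist.
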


This result can be found in the literature. 
It is for instance derived in~\cite{Slepian69} and (independently)~\cite{Tanner74}. We believe it is significantly
older, but have not managed to find an earlier, explicit reference. 
In both~\cite{Slepian69} and~\cite{Tanner74} the proposition is a consequence of a more general result. 
There is a much shorter and simpler proof of Proposition~\ref{prop:slepian} however. 
See for instance the start of the proof of Lemma 13.2.2 in~\cite{MatousekDiscrGeom}.

\begin{proofof}{Lemma~\ref{lem:det}}
The relations~\eqref{eq:rescale} show that when computing the sought maximum we can restrict attention 
to $k$-tuples $u_1,\dots,u_k$ such that 
$\rho(u_1,\dots,u_k)=1$.

Let $u_1,\dots,u_k \in \eR^k$ be such that $\rho(u_1,\dots,u_k)=1$.
Let $B$ be the (unique) ball of radius one such that $\orig, u_1,\dots,u_k \in \partial B$ and let $c$ denote its center.
If we set $w_i = u_i-c$ for $i=0,\dots,k$ where $u_0 := \orig$, then $w_0,\dots,w_k \in S^{k-1}$.
In fact, this procedure provides a one-to-one correspondence between 
$(k+1)$-tuples $w_0,\dots,w_k \in S^{k-1}$ and $k$-tuples $u_1,\dots,u_k \in \eR^k$ satisfying $\rho(u_1,\dots,u_k)=1$.
We also have

$$ \begin{array}{rcl} 
\displaystyle D(u_1,\dots,u_k)
& = & \displaystyle k! \cdot \vol\left(\conv\left(\left\{\orig,u_1,\dots,u_k\right\}\right)\right) \\
& = & \displaystyle k! \cdot \vol\left(\conv\left(\left\{w_0,\dots,w_k\right\}\right)\right). 
\end{array} $$

\noindent
Part~\ref{itm:det} now follows immediately from Proposition~\ref{prop:slepian}.

For the proof of part~\ref{itm:detgen} we use the first interpretation of the absolute value of the determinant 
given in~\eqref{eq:elem}.
The volume of the parallelopiped $P(u_1,\dots,u_{k+\ell})$ can be written as 

$$ \begin{array}{rcl} D(u_1,\dots,u_{k+\ell}) 
& = & \displaystyle 
\prod_{i=1}^{k+\ell} \dist( u_i, \Lcal(\{u_1,\dots,u_{i-1}\}) ) \\
& \leq & \displaystyle 
\left( \prod_{i=1}^{k} \dist( u_i, \Lcal(\{u_1,\dots,u_{i-1}\}) ) \right) \cdot 
\left( \prod_{j=1}^{\ell} \dist( u_{k+j}, \Lcal(\{u_{k+1},\dots,u_{k+j-1}\}) \right)
\end{array} $$

\noindent
where 
$\dist(x,A) := \inf_{y \in A} \norm{x-y}$.
Writing $v_{k,\ell}$ for the maximum in the LHS in Part~\ref{itm:detgen} of the lemma statement, we find:

$$ \begin{array}{rcl} 
v_{k,\ell}
& \leq & \displaystyle 
\left( \max_{u_1,\dots,u_k \in \eR^{k+\ell},\atop\rho(u_1,\dots,u_{k})\leq 1}
\prod_{i=1}^{k} \dist( u_i, \Lcal(\{u_1,\dots,u_{i-1}\}) ) \right) \cdot
\left( \max_{w_1,\dots,w_\ell \in \eR^{k+\ell},\atop\rho(w_1,\dots,w_{\ell})\leq 1}
\prod_{i=1}^{\ell} \dist( w_i, \Lcal(\{w_1,\dots,w_{i-1}\}) ) \right) \\
& = & \displaystyle 
\left( \max_{u_1,\dots,u_k \in \eR^{k},\atop\rho(u_1,\dots,u_{k})\leq 1}
\prod_{i=1}^{k} \dist( u_i, \Lcal(\{u_1,\dots,u_{i-1}\}) ) \right) \cdot
\left( \max_{w_1,\dots,w_\ell \in \eR^{\ell},\atop\rho(w_1,\dots,w_{\ell})\leq 1}
\prod_{i=1}^{\ell} \dist( w_i, \Lcal(\{w_1,\dots,w_{i-1}\}) ) \right) \\
& = & v_k \cdot v_\ell,
   \end{array} $$

\noindent
where in the penultimate line we use that if we apply an orthogonal transformation $T$
mapping $u_1,\dots,u_k \in \eR^{k+\ell}$ to $Tu_1,\dots,Tu_k \in \eR^k \times \{0\}^\ell$ then 
$\rho(Tu_1,\dots,Tu_k) = \rho(u_1,\dots,u_k)$ and 
$\dist( Tu_i, \Lcal(\{Tu_1,\dots,Tu_{i-1}\}) ) = \dist( u_i, \Lcal(\{u_1,\dots,u_{i-1}\}) )$.
\end{proofof}

\begin{proofof}{Lemma~\ref{lem:Iint}}
Applying the linear Blaschke-Petkantschin 
formula (see e.g.~\cite{SchneiderWeil}, Theorem 7.2.1) to~\eqref{eq:Xmecke}, we have:

\begin{equation}\label{eq:aap} 
\begin{array}{rcl} 
\Ee X(r)
& = & \displaystyle 
\lambda^k \cdot \frac{(d+1)\cdot\dots\cdot(d-k+2)}{(k+1)\cdot\dots\cdot 2} \cdot 
\frac{\kappa_{d+1}\cdot\dots\cdot\kappa_{d-k+2}}{\kappa_{k+1}\cdot\dots\cdot\kappa_2} \cdot \\
& & \displaystyle \hspace{5ex}
\int_{\eR^k}\dots\int_{\eR^k} 
1_{\{\rho(u_1,\dots,u_k) \leq r\}}\cdot D(u_1,\dots,u_k)^{d-k} \dd u_1\dots\dd u_k,
\end{array} 
\end{equation}

\noindent
(To obtain this identity from Theorem 7.2.1 of~\cite{SchneiderWeil}, we use the identities 
$\omega_i = (i+1)\kappa_{i+1}$ and that 
$\rho(.)$ is invariant under orthogonal transformations. That is $\rho(Tx_1,\dots,Tx_k) = \rho(x_1,\dots,x_k)$
for any orthogonal transformation $T : \eR^d\to\eR^d$ and any $x_1,\dots,x_k$. 
Note that in~\cite{SchneiderWeil}, 
the inner integral in the RHS of (7.7) does not depend on the choice of $L \in G(d,q)$ and $\nu_q(.)$ denotes 
the uniform measure on $G(d,q)$, the space of all linear subspaces of dimension $q$.) 

The choice $\lambda = 1/\kappa_d$ implies that 
the constant in the RHS of \eqref{eq:aap} equals

\begin{equation}\label{eq:kappas} 
\frac{1}{(k+1)!} \cdot \frac{1}{\prod_{i=2}^{k+1} \kappa_i} \cdot \frac{\kappa_{d+1}}{\kappa_d} \cdot\dots\cdot 
\frac{\kappa_{d-k+2}}{\kappa_d} = d^{O(1)}, 
\end{equation}

\noindent 
using~\eqref{eq:volball} and  that $\Gamma(t+\alpha)/\Gamma(t) = (1+o_t(1)) \cdot t^\alpha$ 
if $\alpha \in \eR$ is fixed and $t\to\infty$, as can for instance be seen from Stirling's approximation to 
the Gamma function.

If $\norm{u_i} > 2r$ for some $1\leq i \leq k$ then $\rho(u_1,\dots,u_k)>r$.
Hence 

\begin{equation}\label{eq:EEXX} 
\Ee X(r) = d^{O(1)} \cdot \int_{B_{\eR^k}(\orig,2r)}\dots\int_{B_{\eR^k}(\orig,2r)} 
1_{\{\rho(u_1,\dots,u_k) \leq r\}}\cdot D(u_1,\dots,u_k)^{d-k} \dd u_1\dots\dd u_k. 
\end{equation}

\noindent
By Lemma~\ref{lem:det} and~\eqref{eq:EEXX},~\eqref{eq:rescale}, we have

$$ \Ee X(r) \leq d^{O(1)} \cdot \left( \kappa_k (2r)^k \right)^k \cdot \left( r^k v_k \right)^{d-k}. $$

\noindent 
Taking $d$-th roots, we find 

$$ \sqrt[d]{\Ee X(r)} \leq (1+o_d(1)) \cdot r^k v_k = r^k v_k + o_d(1). $$

It remains to derive a lower bound of the same form. To this end, we fix an arbitrarily small $\eta>0$ and 
let $A \subseteq \eR^{k^2}$ be defined by

$$ A := \left\{ (u_1,\dots,u_k) \in \eR^k \times\dots\times\eR^k :
\begin{array}{l} \rho(u_1,\dots,u_k) < r,\text{ and}\\
D(u_1,\dots,u_k)>(1-\eta) r^k v_k 
\end{array}\right\}. $$

\noindent
We have

\begin{equation}\label{eq:EeXrlb} 
\Ee X(r) \geq d^{O(1)} \cdot \vol_{k^2}(A) \cdot \left((1-\eta)r^k v_k\right)^{d-k}. 
 \end{equation}

The set $A$ is non-empty. (If $\orig,u_1,\dots,u_k$ form a regular simplex of
side-length $(1-\eta/2)^{1/k} r \ell_k$, then $(u_1,\dots,u_k) \in A$ by~\eqref{eq:rescale}
and Lemma~\ref{lem:det}.) 
The set $A$ is also open. (To see this, note that $D$ is continuous, so that $\{D>(1-\eta)r^kv_k\}$
is open. Now note that $\rho$ is continuous on $\{D>(1-\eta)r^kv_k\}$.)
Being non-empty and open implies that $A$ has positive $k^2$-dimensional volume.
So, taking the $d$-th root of~\eqref{eq:EeXrlb} gives 

$$ \sqrt[d]{\Ee X(r)} \geq (1+o_d(1)) \cdot (1-\eta) r^k v_k. $$

\noindent
Since $\eta>0$ was arbitrary this in fact gives $\sqrt[d]{\Ee X(r)} \geq r^k v_k +o_d(1)$, finishing the
proof of~\ref{itm:Iint}.

We now turn attention to the second moment of $X(r)$.
We remark that

\begin{equation}\label{eq:Wdef} 
X(r)^2 = \left|\left\{ (z_1,\dots,z_k, w_1,\dots,w_k) \in \Zcal^{2k} : \begin{array}{l} 
z_1,\dots,z_k \text{ are distinct, and} \\
w_1,\dots,w_k \text{ are distinct, and} \\
\rho(z_1,\dots,z_k) \leq r, \text{ and} \\
\rho(w_1,\dots,w_k) \leq r.\end{array}\right\}\right|.
\end{equation}

\noindent
We define for $\ell=0,\dots,k$:

$$
X_{\ell} := \left|\left\{ (z_1,\dots,z_{k+\ell}) \in \Zcal^{k+\ell} : 
\begin{array}{l} 
z_1,\dots,z_{k+\ell} \text{ are distinct, and} \\
\rho(z_1,\dots,z_k) \leq r, \text{ and} \\ 
\rho(z_1,\dots,z_{k-\ell}, z_{k+1},\dots,z_{k+\ell}) \leq r.
\end{array}\right\}\right|.
$$

\noindent
Accounting for the ways in which $(z_1,\dots,z_k)$ and $(w_1,\dots,w_k)$ in~\eqref{eq:Wdef} might overlap 
and using symmetry, we find 

\begin{equation}\label{eq:EEW} 
\Ee X(r)^2 = \sum_{\ell=0}^k (k-\ell)! \cdot {k\choose\ell}^2 \cdot \Ee X_\ell. 
\end{equation}

We will bound $\Ee X_\ell$ separately for each $\ell$.
The easiest value is of course $\ell=0$, as $X_0=X(r)$. 
In particular, Part~\ref{itm:Iint} tells us that $\Ee X_0 = \Ee X(r) = \left( r^k v_k + o_d(1)\right)^d$.
The next simplest value is $\ell=k$. 
Now the Mecke formula gives

\begin{equation}\label{eq:EEWk} \begin{array}{rcl} 
\Ee X_k 
& = & \displaystyle 
\lambda^{2k} 
\int_{\eR^d}\dots\int_{\eR^d} 
1_{\left\{\rho(x_1,\dots,x_k),\rho(x_{k+1},\dots,x_{2k}) \leq r\right\}}\dd x_1\dots\dd x_{2k} \\[2ex]
 & = & \displaystyle 
 \left( \lambda^k \int_{\eR^d}\dots\int_{\eR^d} 1_{\{\rho(x_1,\dots,x_k) \leq r\}}\dd x_1\dots\dd x_k\right)^2 \\[2ex]
 & = &       
 \Ee X(r)^2. 
 \end{array} \end{equation}

\noindent
For the remaining values $0<\ell<k$ a little more work is needed.
Arguing as in the proof of Lemma~\ref{lem:Iint}, we have:

\begin{equation}\label{eq:EeXl} \begin{array}{rcl} 
\Ee X_\ell 
& = & \displaystyle 
\lambda^{k+\ell} 
\int_{\eR^d}\dots\int_{\eR^d} 
1_{\left\{\rho(x_1,\dots,x_k)\leq r,\rho(x_{1},\dots,x_{k-\ell},x_{k+1},\dots,x_{k+\ell}) \leq r\right\}}
\dd x_1\dots\dd x_{k+\ell} \\[2ex]
& \leq & \displaystyle 
\lambda^{k+\ell} 
\int_{\eR^d}\dots\int_{\eR^d} 
1_{\left\{\rho(x_1,\dots,x_k) \leq r,\rho(x_{k+1},\dots,x_{k+\ell}) \leq r\right\}}
\dd x_1\dots\dd x_{k+\ell} \\[2ex]
 & = & \displaystyle 
 d^{O(1)} 
 \cdot \int_{B_{\eR^{k+\ell}}(\orig,2)}\dots\int_{B_{\eR^{k+\ell}}(\orig,2)} 1_{\{\rho(u_1,\dots,u_k)\leq r,
 \rho(u_{k+1},\dots,u_{k+\ell})\leq r\}} \\[2ex]
 & & \displaystyle \hspace{35ex} \cdot
 D(u_1,\dots,u_{k+\ell})^{d-(k+\ell)}
 \dd u_1\dots\dd u_{k+\ell} \\[2ex]
& \leq & \displaystyle 
d^{O(1)} \cdot \left( \kappa_{k+\ell} 2^{k+\ell} \right)^{k+\ell}
\cdot \left( r^{k+\ell} v_k v_\ell \right)^{d-(k+\ell)} \\[2ex]
& = & \left( r^{k+\ell} v_k v_\ell +o_d(1) \right)^d, 
\end{array} \end{equation}

\noindent 
where we used Part~\ref{itm:detgen} of Lemma~\ref{lem:det} in the penultimate line.

We have $v_1=2$ and for $\ell \geq 2$:

$$ \begin{array}{rcl}
\displaystyle 
\left(\frac{v_\ell}{v_{\ell-1}}\right)^2
& = & \displaystyle 
\frac{(\ell+1)^{\ell+1}(\ell-1)^{\ell-1}}{\ell^\ell}
= \frac{(\ell+1)^2}{\ell} \cdot \left(\frac{(\ell+1)(\ell-1)}{\ell^2}\right)^{(\ell-1)/2} \\[2ex]
& = & \displaystyle 
\frac{\ell^2+2\ell+1}{\ell} \cdot \left(1-\frac{1}{\ell^2}\right)^{\ell-1}
\geq (\ell+2) \cdot \left(1-\frac{1}{\ell^2}\right)^\ell \\[2ex]
& \geq & \displaystyle 
(\ell+2) \cdot \left(1-\frac{1}{\ell}\right) \geq 
4 \cdot (1-1/2) = 2. 
\end{array} $$

\noindent
So if $r>1/\sqrt{2}$ then $1 < rv_1 < r^2v_2 < \dots < r^kv_k$. 
Thus, by Part~\ref{itm:Iint} and~\eqref{eq:EeXl}, 
provided $r>1/\sqrt{2}$:

$$ \Ee X_\ell = o\left( \Ee X(r)^2 \right) \quad (\ell = 0,\dots,k-1). $$

\noindent
Part~\ref{itm:Iint2} now follows by combining with~\eqref{eq:EEW} and~\eqref{eq:EEWk}.
\end{proofof}

\begin{proofof}{Theorem~\ref{thm:fass}}
We let $\eps>0$ be arbitrary and we let $\delta=\delta(\eps,k)>0$ be a small constant to be chosen more precisely 
in the remainder of the proof.
We note that if $z_1,\dots,z_k \in \Zcal$ define a $(d-k)$-face $F$ then
every $x \in F$ must satisfy $\norm{x} \geq \rho(z_1,\dots,z_k)$ as
$B(x,\norm{x})$ contains $\orig,z_1,\dots,z_k$ on its boundary.
Since every face contains a vertex, Lemma~\ref{lem:vertub} implies that, with probability $1-o_d(1)$, 
the only $k$-tuples $z_1,\dots,z_k \in \Zcal$ that define a $(d-k)$-face
must satisfy $\rho(z_1,\dots,z_k) < 1+\delta$. 
We have:

\begin{equation}\label{eq:fleqX}
\Pee\left( f_{d-k}(\Vtyp) \leq X(1+\delta) \right) = 1-o_d(1).
\end{equation}

\noindent
Provided $\delta=\delta(\eps,k)$ was chosen sufficiently small, Markov's inequality tells us that 

$$ \Pee\left( X(1+\delta) > (1+\eps)^d v_k^d \right) \leq 
\frac{\Ee X(1+\delta)}{(1+\eps)^d v_k^d} 
= \left( \frac{(1+\delta)^k v_k + o_d(1)}{(1+\eps)v_k}\right)^d
= o_d(1). $$

\noindent
Combining with~\eqref{eq:fleqX}, this gives 

\begin{equation}\label{eq:fassub} \begin{array}{rcl} 
\displaystyle 
\Pee\left( \sqrt[d]{f_{d-k}(\Vtyp)} > (1+\eps) v_k \right)
& \leq & \displaystyle 
\Pee\left( f_{d-k}(\Vtyp) > X(1+\delta) \right) \\
& & \displaystyle + \Pee\left( X(1+\delta) > (1+\eps)^d v_k^d \right) \\
& = & o_d(1). 
\end{array} \end{equation}

It remains to derive an upper bound on $\Pee\left( \sqrt[d]{f_{d-k}(\Vtyp)} < (1-\eps) v_k \right)$ that tends to zero with $d$.
For this purpose, we define 

\begin{equation}\label{eq:Ydef} 
Y := \left|\left\{ (z_1,\dots,z_k) \in \Zcal^k : \begin{array}{l} z_1,\dots,z_k \text{ are distinct, and} \\
\rho(z_1,\dots,z_k) \leq 1-\delta, \text{ and} \\
B(z_1,\dots,z_k) \cap \Zcal = \emptyset \end{array} \right\}\right|.
\end{equation}

\noindent
Clearly each $k$-tuple counted by $Y$ defines a $(d-k)$-face of $\Vtyp$. 
As each $k$-tuple has $k!$ re-orderings, we have

$$ f_{d-k}(\Vtyp) \geq \frac{1}{k!} \cdot Y. $$

\noindent
(We also use that, almost surely, 
no two distinct $k$-sets of points in $\Zcal$ define the same face.)
Using the Mecke formula once again we have 

$$ \Ee Y =
\lambda^k \int_{\eR^d}\dots\int_{\eR^d} 1_{\{\rho(x_1,\dots,x_k) \leq 1-\delta\}} 
\cdot \Pee( B(x_1,\dots,x_k) \cap \Zcal = \emptyset) \dd x_1\dots\dd x_k. $$

\noindent
Comparing to~\eqref{eq:Xmecke} we see that 
$e^{-(1-\delta)^d} \cdot \Ee X(1-\delta) \leq \Ee Y \leq \Ee X(1-\delta)$. 
In other words 

\begin{equation}\label{eq:noot}
\Ee Y = (1+o_d(1)) \cdot \Ee X(1-\delta).
\end{equation}

\noindent
Since $Y \leq X(1-\delta)$, provided $\delta=\delta(\eps,k)$ was chosen sufficiently small, 
Part~\ref{itm:Iint2} of Lemma~\ref{lem:Iint} now gives

$$ \Ee Y^2 \leq \Ee X(1-\delta)^2 = (1+o_d(1)) \cdot \left( \Ee Y \right)^2. $$

\noindent
By~\eqref{eq:noot} and Part~\ref{itm:Iint} of 
Lemma~\ref{lem:Iint}, provided $\delta=\delta(\eps,k)$ is chosen sufficiently small, we have 

$$ k! \cdot (1-\eps)^d v_k^d < \frac{1}{2} \Ee Y, $$

\noindent 
for $d$ sufficiently large.
We can now apply Chebyshev's inequality to obtain:

$$ \begin{array}{rcl} \Pee\left( \sqrt[d]{f_{d-k}(\Vtyp)} < (1-\eps) v_k \right) 
& \leq & \Pee( Y <  k! \cdot (1-\eps)^d v_k^d ) \\
& \leq & \Pee( |Y-\Ee Y| > \frac12 \Ee Y ) \\
& \leq & 4 \Var Y / \left(\Ee Y\right)^2 \\
& = & o_d(1). \end{array} $$

\noindent
Together with~\eqref{eq:fassub} this establishes Theorem~\ref{thm:fass}. 
\end{proofof}

\begin{proofof}{Theorem~\ref{thm:diambigface}}
Recall that if $F \in \Fcal_{d-k}(\Vtyp)$ is defined by $z_1,\dots,z_k \in \Zcal$ then 
$\norm{x} \geq \rho(z_1,\dots,z_k)$ for all $x \in F$.
So by Lemma~\ref{lem:vertub}, with probability $1-o_d(1)$, 
any face defined by some $z_1,\dots,z_k \in \Zcal$ with $\rho(z_1,\dots,z_k) \geq 1-\eps$ is contained in the annulus 
$B(\orig,1+\eps)\setminus B(\orig,1-\eps)$.
By Theorem~\ref{thm:main}, with probability $1-o_d(1)$, $\Vtyp$ has diameter at most $2+\eps$.
Applying Lemma~\ref{lem:diamconvinann} we obtain that, with probability $1-o_d(1)$:

$$ \sum_{F \in \Fcal_{d-k}(\Vtyp)} \diam(F) \leq 4\sqrt{\eps}\cdot f_{d-k}(\Vtyp) + (2+\eps)\cdot X(1-\eps). $$

\noindent
We note that 

$$ \Pee\left( X(1-\eps) > \eps f_{d-k}(\Vtyp) \right) \leq \Pee\left( X(1-\eps) > \eps(1-\eps/2)^dv_k^d \right)
+ \Pee\left( f_{d-k}(\Vtyp) < (1-\eps/2)^dv_k^d \right). $$

\noindent 
The first term in the RHS is $o_d(1)$ by Lemma~\ref{lem:Iint} and Markov's inequality.
The second term is $o_d(1)$ as well, by Theorem~\ref{thm:fass}.
We arrive at:

$$ \Pee\left( \frac{1}{f_{d-k}(\Vtyp)} \sum_{F \in \Fcal_{d-k}(\Vtyp)} \diam(F) > 
4\sqrt{\eps} + \eps(2+\eps) \right) = o_d(1). $$

\noindent
The theorem follows. 
\end{proofof}

It remains to prove Proposition~\ref{prop:Nkeps}.
We need one more preparatory lemma.

\begin{lemma}\label{lem:zevenendertig}
For every $\eps>0$ and $k\in\eN$ there exist $\delta>0$ and $c < v_k$ 
such that the following holds. 
For all $u_1,\dots,u_k \in \eR^k$ that are not $\eps$-near regular, we have 
$\rho(u_1,\dots,u_k) > 1+\delta$ or $D(u_1,\dots,u_k) \leq c$, or both.
\end{lemma}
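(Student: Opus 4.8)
The plan is to argue by contradiction using the compactness of the relevant parameter space together with the rigidity statement in part~\ref{itm:det} of Lemma~\ref{lem:det}. By the scaling relations~\eqref{eq:rescale}, it is harmless (though for this lemma not strictly necessary) to think of the constraint region as living near the scale where $\rho = 1$. The key point is that $\orig, u_1, \dots, u_k$ being a regular simplex of side-length $\ell_k$ is \emph{exactly} the equality case in Lemma~\ref{lem:det}\ref{itm:det}, which pins down the configuration uniquely up to orthogonal transformation; so any configuration that is bounded away from being regular must have $D$ strictly below $v_k$, and a compactness argument should upgrade this to a \emph{uniform} gap $c < v_k$.

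First I would set up the compact domain. Define
$$ K := \left\{ (u_1,\dots,u_k) \in (\eR^k)^k : \rho(u_1,\dots,u_k) \leq 1+1,\ D(u_1,\dots,u_k) \geq v_k/2 \right\} $$
(any fixed positive upper bound on $\rho$ and positive lower bound on $D$ below $v_k$ will do). The condition $D \geq v_k/2 > 0$ forces the $u_i$ to be linearly independent, so $\rho$ is continuous there by Lemma~\ref{lem:rho}\ref{itm:rhocont}, and hence $K$ is closed. It is also bounded: if some $\norm{u_i}$ were large then, since $\orig$ and $u_i$ both lie on the sphere $\partial B(u_1,\dots,u_k)$, we would get $\rho \geq \norm{u_i}/2$, contradicting $\rho \leq 2$. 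Thus $K$ is compact. The set $N_\eps \subseteq K$ of configurations in $K$ that are \emph{not} $\eps$-near regular (i.e.\ for which some $\norm{u_i - u_j}$, with $u_0 := \orig$, lies outside $(\ell_k - \eps, \ell_k + \eps)$, or $\rho > 1$) — more precisely I would take $N_\eps := \{(u_1,\dots,u_k)\in K : \rho(u_1,\dots,u_k)\leq 1 \text{ and } (u_1,\dots,u_k)\text{ is not }\eps\text{-near regular}\}$ — is closed in $K$, hence compact.

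Next, on $N_\eps$ the continuous function $D$ attains a maximum $c := \max_{N_\eps} D$ (if $N_\eps$ is empty, the conclusion is vacuous with, say, $c := v_k/2$). I claim $c < v_k$. Indeed, if $c = v_k$ the maximum would be attained at some $(u_1^\ast,\dots,u_k^\ast) \in N_\eps$; but then, since $\rho(u_1^\ast,\dots,u_k^\ast)\le 1$ and $D(u_1^\ast,\dots,u_k^\ast) = v_k$, the ``only if'' direction of Lemma~\ref{lem:det}\ref{itm:det} (applied after rescaling by $1/\rho \geq 1$, using~\eqref{eq:rescale}; note rescaling up can only increase $D$, so in fact $\rho = 1$) forces $\orig, u_1^\ast, \dots, u_k^\ast$ to be a regular simplex of side-length $\ell_k$ — contradicting membership in $N_\eps$. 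Hence $c < v_k$. Now choose $\delta > 0$ small; any $(u_1,\dots,u_k) \in \eR^k\times\cdots\times\eR^k$ that is not $\eps$-near regular either has $\rho > 1+\delta$ (and we are done), or has $\rho \leq 1+\delta$; in the latter case, rescaling by $\rho^{-1} \geq (1+\delta)^{-1}$ lands us in $N_\eps$, so $D(u_1,\dots,u_k) = \rho^k D(\rho^{-1}u_1,\dots,\rho^{-1}u_k) \leq (1+\delta)^k c$. Choosing $\delta$ small enough that $(1+\delta)^k c < v_k$ — possible since $c < v_k$ — and renaming $(1+\delta)^k c$ as the new $c$, we get $D(u_1,\dots,u_k) \leq c < v_k$, as required.

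The main obstacle, such as it is, is purely bookkeeping around the scaling: the lemma statement mixes an \emph{inequality} constraint $\rho \leq 1+\delta$ with the rigidity of the equality case (which is stated at $\rho = 1$), so one has to be a little careful to reduce to the compact ``$\rho \le 1$'' picture without losing the strict gap, and to check that the ``$\eps$-near regular'' exceptional set is genuinely closed (which it is, since the defining inequalities are non-strict once negated). Everything else — compactness of $K$, continuity of $\rho$ and $D$, attainment of the max — is routine. I would not expect to need any tool beyond Lemma~\ref{lem:det}, Lemma~\ref{lem:rho}, and~\eqref{eq:rescale}.
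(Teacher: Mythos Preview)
Your overall strategy --- compactness plus the rigidity clause in Lemma~\ref{lem:det}\ref{itm:det} --- matches the paper's, and the ingredients (continuity of $\rho$ from Lemma~\ref{lem:rho}, the scaling relations~\eqref{eq:rescale}) are exactly the right ones. But the rescaling step, which you correctly flagged as the delicate point, does not go through as written. You assert that if $(u_1,\dots,u_k)$ is not $\eps$-near regular with $\rho(u_1,\dots,u_k) \leq 1+\delta$, then rescaling by $\rho^{-1}$ lands in $N_\eps$; in particular, the rescaled tuple should again fail to be $\eps$-near regular. That property is not scale-invariant, and the claim is simply false. Already for $k=1$ (where $\ell_1 = 2$): rescaling any $u_1 \in \eR\setminus\{0\}$ by $\rho(u_1)^{-1} = 2/|u_1|$ always returns $|w_1| = 2 = \ell_1$, the \emph{exactly} regular configuration, so $w_1$ never lies in your $N_\eps$. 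For general $k$, a tuple that just barely fails (say one edge has length $\ell_k + \eps$, the rest within tolerance) can be pushed back inside the $\eps$-window by a rescaling factor close to $1$.

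The paper repairs this at precisely the point you anticipated, with two changes. First, it rescales by the \emph{fixed} factor $(1+\delta)^{-1}$ rather than the variable $\rho^{-1}$. Second, it replaces ``not $\eps$-near regular'' in the definition of the compact exceptional set by the relaxed condition $g(u_1,\dots,u_k) := \sum_i (\norm{u_i}-\ell_k)^2 + \sum_{i<j} (\norm{u_i-u_j}-\ell_k)^2 \geq \eps^2/4$. With $\delta$ chosen so that $(\ell_k+\eps)/(1+\delta) > \ell_k + \eps/2$, one checks by hand that the fixed rescaling sends every not-$\eps$-near-regular tuple with $\rho \leq 1+\delta$ into $\{\rho \leq 1,\ g \geq \eps^2/4\}$; from there your compactness argument applies verbatim. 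So the remedy is exactly the bookkeeping you foresaw --- but it requires loosening the exceptional set \emph{before} taking the maximum of $D$ over it, not after.
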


\begin{proof}
Let us set

$$ \displaystyle g(u_1,\dots,u_k) := \sum_{i=1}^k \left( \norm{u_i}-\ell_k\right)^2 + \sum_{1\leq i<j \leq k}
 \left( \norm{u_i-u_j}-\ell_k\right)^2, $$
 
$$ W := \left\{ (u_1,\dots,u_k) \in \eR^k\times\dots\times\eR^k : 
\begin{array}{l} D(u_1,\dots,u_k) \geq v_k/2, \text{ and,} \\
\rho(u_1,\dots,u_k) \leq 1, \text{ and,} \\
g(u_1,\dots,u_k) \geq \eps^2/4
\end{array} \right\}, $$

\noindent 
and 

$$ c' := \max\left( \sup_{(u_1,\dots,u_k) \in W} D(u_1,\dots,u_k), \frac{v_k}{2} \right). $$

\noindent
We claim that $c' < v_k$. If $W = \emptyset$ then this is clearly true, so suppose $W$ is non-empty.
Since $D, g$ are continuous, the set $W_0 := \{ D \geq v_k/2, g \geq \eps^2/4 \}$ is closed.
By Lemma~\ref{lem:rho}, $\rho$ is continuous on $W_0$. So $W$ is closed too.
It is also bounded, since $\rho\leq 1$ implies $\norm{u_1},\dots,\norm{u_k} \leq 2$.
We see that $W$ is compact, and hence the supremum $c'' := \sup_{(u_1,\dots,u_k) \in W} D(u_1,\dots,u_k)$ is 
attained by some $(w_1,\dots,w_k) \in W$. Lemma~\ref{lem:det} tells us that 
$c'' < v_k$ as $g(w_1,\dots,w_k) \neq 0$ implies that $\orig,w_1,\dots,w_k$ is not a regular simplex
of side length $\ell_k$.
So $c' = \max( v_k/2, c'' ) < v_k$ as claimed.

We now choose a small $\delta=\delta(\eps,k) > 0$, small enough so that $(1+\delta)^k c' < v_k$ and 
$(\ell_k+\eps)/(1+\delta) > \ell_k + \eps/2$.
Let $u_1,\dots,u_k$ be such that $\rho(u_1,\dots,u_k) \leq 1+\delta$ and 
$u_1,\dots,u_k$ is not $\eps$-near regular, but otherwise arbitrary.
Setting $w_i := u_i/(1+\delta)$ we see that $\rho(w_1,\dots,w_k) \leq 1$ by~\eqref{eq:rescale}.
Moreover, if $i,j$ are such that $\norm{u_i-u_j} \geq \ell_k+\eps$  then $\norm{w_i-w_j} \geq \ell_k+\eps/2$.
Similarly, if $\norm{u_i} \geq \ell_k+\eps$  then $\norm{w_i} \geq \ell_k+\eps/2$.
We also have $\norm{w_i} \leq \norm{u_i}, \norm{w_i-w_j} \leq \norm{u_i-u_j}$. 
It follows that $g(w_1,\dots,w_k) \geq \eps^2/4$.
Appealing to~\eqref{eq:rescale}, we find that 

$$ D(u_1,\dots,u_k) = (1+\delta)^k D(w_1,\dots,w_k) \leq (1+\delta)^k c' < v_k. $$

\noindent
This shows that the lemma holds with the choice $c := (1+\delta)^k c'$.
\end{proof}

\begin{proofof}{Proposition~\ref{prop:Nkeps}}
We fix $\eps>0$, let $\delta, c$ be as provided by Lemma~\ref{lem:zevenendertig} and define

$$ N := \left|\left\{ (z_1,\dots,z_k) \in \Zcal^k :
\begin{array}{l} \rho(z_1,\dots,z_k) \leq 1+\delta, \text{ and }\\
z_1,\dots,z_k \text{ not $\eps$-near regular}
\end{array} \right\}\right|. $$

\noindent 
Arguing as in the proof of Lemma~\ref{lem:Iint}, we find:

$$ \begin{array}{rcl}
 \Ee N 
 & = & \displaystyle 
\lambda^k \int_{\eR^d}\dots\int_{\eR^d}
 1_{\tiny\left\{\begin{array}{l} \rho(x_1,\dots,x_k) \leq 1+\delta, \text{ and }\\
x_1,\dots,x_k \text{ not $\eps$-near regular}
\end{array}\right\}} \dd x_1\dots\dd x_k \\[2ex]
& = & \displaystyle 
d^{O(1)} \int_{\eR^k}\dots\int_{\eR^k}
 1_{\tiny\left\{\begin{array}{l} \rho(u_1,\dots,u_k) \leq 1+\delta, \text{ and }\\
u_1,\dots,u_k \text{ not $\eps$-near regular}
\end{array}\right\}} \cdot D(u_1,\dots,u_k)^{d-k}\dd u_1\dots\dd u_k \\[2ex]
& \leq & \displaystyle
d^{O(1)} \cdot \left( \kappa_k 3^k \right)^k \cdot c^{d-k} \\
& = & \displaystyle 
\left( c + o_d(1) \right)^d, 
\end{array} $$

\noindent 
where $c$ is as provided by Lemma~\ref{lem:zevenendertig}, which we've applied in the penultimate line.
For every fixed $\eta>0$ we have

$$ \Pee\left( N \geq \eta f_{d-k}(\Vtyp) \right) 
\leq 
\Pee\left( N \geq \eta \left(\frac{c+v_k}{2}\right)^d \right) 
+ \Pee\left( f_{d-k}(\Vtyp) \leq \left(\frac{c+v_k}{2}\right)^d \right) 
= o_d(1), $$

\noindent
by Theorem~\ref{thm:fass} and Markov's inequality.
This proves $N / f_{d-k}(\Vtyp) \xrightarrow[d\to\infty]{\Pee} 0$.

To conclude the proof, we remind the reader that as explained in the start of the proof of Theorem~\ref{thm:fass}, with 
probability $1-o_d(1)$, every 
$(d-k)$-face of $\Vtyp$ is defined by some $z_1,\dots,z_k$ with $\rho(z_1,\dots,z_k) \leq 1+\delta$.
It follows that, with probability $1-o_d(1)$:

$$ f_{d-k}(\Vtyp) - M_{k,\eps} \leq N. $$

\noindent
In other words 

$$1 - \frac{N}{f_{d-k}(\Vtyp)} \leq \frac{M_{k,\eps}}{f_{d-k}(\Vtyp)} \leq 1, $$ 

\noindent
with probability $1-o_d(1)$. 
The proposition follows.
\end{proofof}

\section{Discussion and suggestions for further work}

We have shown that the inradius, outradius, diameter and mean with of the typical Poisson-Voronoi cell,
after normalization and when the dimension tends to infinity, 
all tend in probability to explicit constants. For the width we've shown non-matching upper and lower bounds.
As already stated in the introduction, we could not prove the following natural conjecture.

\begin{conjecture}
There is a constant $c$ such that 

$$ \frac{\width(\Vtyp)}{\sqrt[d]{\lambda \cdot \vol(B)}} \xrightarrow[d\to\infty]{\Pee} c. $$

\end{conjecture}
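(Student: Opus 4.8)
The plan is to prove the conjecture with $c=3/2$, matching the upper bound of Proposition~\ref{prop:width}. Keep the normalisation $\lambda = 1/\kappa_d$ in force, so the claim is $\width(\Vtyp)\xrightarrow[d\to\infty]{\Pee} 3/2$. The upper bound $\width(\Vtyp)\le 3/2+\eps$ with probability $1-o_d(1)$ is essentially the argument already in the paper: if $z^\star\in\Zcal$ has minimal norm $R_{\min}:=\norm{z^\star}$, then $R_{\min}=1+o_d(1)$ by the Chernoff estimates of Section~3, and $\Vtyp\subseteq B(\orig,1+\eps)\cap H_{z^\star}$ by Lemma~\ref{lem:vertub} and \eqref{eq:altCdef}, \eqref{eq:Hxdef}, so $w(z^\star/R_{\min},\Vtyp)\le(1+\eps)+R_{\min}/2\to 3/2$; moreover this is attained up to $o_d(1)$, since the midpoint $z^\star/2$ always lies in $\Vtyp$ (giving $\varphi(z^\star/R_{\min},\Vtyp)=R_{\min}/2$) while, conditionally on $z^\star$, the ball $B(-(1-\eps)z^\star/R_{\min},\,1-\eps)$ misses $\Zcal$ with probability $1-o_d(1)$ (giving $\varphi(-z^\star/R_{\min},\Vtyp)\ge 1-\eps$). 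So all the work is in improving the lower bound to $\width(\Vtyp)\ge 3/2-\eps$, i.e.\ in showing $w(v,\Vtyp)=\varphi(v,\Vtyp)+\varphi(-v,\Vtyp)\ge 3/2-\eps$ for \emph{all} $v\in S^{d-1}$ simultaneously, with probability $1-o_d(1)$.

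The central step is a sharp, uniform description of the support function $\varphi(\cdot,\Vtyp)$. Fix small constants $\delta\ll\eta\ll\eps$ and call $v\in S^{d-1}$ \emph{special} if $\psi(v,\Vtyp)<1-\eps$; by \eqref{eq:altCdef} this means there is $z\in\Zcal$ with $\norm{z}^2<2(1-\eps)\langle v,z\rangle$, and writing $\gamma=\angle z\orig v$ and $h=\norm{z}/2$ for the ``shallowest'' such witness one has $h<\cos\gamma$ and $h\ge R_{\min}/2=\tfrac12+o_d(1)$. Using Mecke-formula computations of exactly the type in the proofs of Lemmas~\ref{lem:angleswhp} and~\ref{lem:vertub}, together with Lemma~\ref{lem:X} and cube-dissection/net covering arguments, one would show (all with probability $1-o_d(1)$): \emph{(i)} if $v$ is not special then $\varphi(v,\Vtyp)\ge\psi(v,\Vtyp)\ge 1-\eps$; \emph{(ii)} if $v$ is special with shallowest witness $(z,\gamma,h)$ then $\varphi(v,\Vtyp)=h\cos\gamma+\sqrt{1-h^2}\,\sin\gamma\,\pm\,\eps$ --- the upper bound being $\varphi(v,\Vtyp)\le\varphi\big(v,B(\orig,1+\eps)\cap H_z\big)$ and the matching lower bound coming from a point of the Voronoi facet $\Vtyp\cap\partial H_z$ attaining this value, which holds because that facet (itself a Poisson--Voronoi-type cell in the bisecting hyperplane of $\orig,z$) reaches ``radius'' $\approx\sqrt{1-h^2}$ in the relevant direction, unless that direction is itself special --- a case excluded by the angle bound (iii) whenever $-v$ is also special; and \emph{(iii)} (a strengthening of Lemma~\ref{lem:angleswhp}) for any $H_1,H_2\ge\tfrac12$, no two points $z_1,z_2\in\Zcal$ with $\norm{z_i}\le 2H_i$ satisfy $\angle z_1\orig z_2>\pi-\arcsin\!\big(1/(4H_1H_2)\big)+\eta$.

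The lower bound then follows from a short case analysis for fixed $v$. If neither $v$ nor $-v$ is special, then $w(v,\Vtyp)\ge 2-2\eps$. If exactly one, say $v$, is special, then $\varphi(v,\Vtyp)\ge\tfrac12-\eps$ by the trivial inclusion $B(\orig,\tfrac12-\eps)\subseteq\Vtyp$ from Theorem~\ref{thm:main}, and $\varphi(-v,\Vtyp)\ge 1-\eps$ by (i), so $w(v,\Vtyp)\ge\tfrac32-2\eps$. If both $v$ and $-v$ are special, with shallowest witnesses $(z_1,\gamma_1,h_1)$ for $v$ and $(z_2,\gamma_2,h_2)$ for $-v$, then $z_2$ lies within angle $\gamma_1+\gamma_2$ of $-z_1/\norm{z_1}$, so $\angle z_1\orig z_2\ge\pi-\gamma_1-\gamma_2$, and (iii) forces $\gamma_1+\gamma_2\ge\arcsin\!\big(1/(4h_1h_2)\big)-\eta$. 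By (ii), $w(v,\Vtyp)\ge\big(h_1\cos\gamma_1+\sqrt{1-h_1^2}\sin\gamma_1\big)+\big(h_2\cos\gamma_2+\sqrt{1-h_2^2}\sin\gamma_2\big)-2\eps$, and a short optimisation over $h_i\ge\tfrac12$, $h_i<\cos\gamma_i$, $\gamma_1+\gamma_2\ge\arcsin(1/(4h_1h_2))$ shows this is at least $\tfrac32$: indeed, along the binding constraint one may take $\gamma_1=0$ and $4h_1h_2\sqrt{1-h_2^2}=1$, whence the second bracket collapses to $1$ and the first to $h_1\ge\tfrac12$. Hence $w(v,\Vtyp)\ge\tfrac32-2\eps$ in every case, and letting $\eps\downarrow0$ gives $\width(\Vtyp)\xrightarrow[d\to\infty]{\Pee}\tfrac32$.

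The main obstacle is statement (ii) in its \emph{uniform} form. In contrast with the parameters of Theorem~\ref{thm:main} --- all of which reduce to ``every vertex has norm $\approx 1$'' together with the emptiness of explicit balls, hence to super-exponentially small failure probabilities that survive a crude union bound --- the value of $\varphi(\cdot,\Vtyp)$ near a special direction is governed by how far a \emph{single} Voronoi facet extends in one particular direction, and the event ``this facet reaches radius $\approx\sqrt{1-h^2}$ here'' has only $\Theta(1)$ probability, so one cannot naively union-bound over a net of directions. I would handle this by exploiting that $\Vtyp\cap\partial H_z$ is again a Poisson--Voronoi-type cell --- generated in the bisecting hyperplane by the orthogonal projections of the remaining points of $\Zcal$ --- conditioning on the witness point $z$ and re-running the $(d-1)$-dimensional analysis of Section~3 there; one also has to fix the order of the small parameters $\delta\ll\eta\ll\eps$ and the net scale carefully so as to keep the exponentially many (though exponentially-small-measure) special and ``doubly special'' directions under control. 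I expect this bookkeeping, rather than any single conceptual difficulty, to be the reason the precise constant was left open.
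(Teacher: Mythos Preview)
This statement is a \emph{conjecture} in the paper; the authors explicitly say they ``were not able to prove, but conjecture'' it, and the discussion in Section~4 stresses that improving the lower bound ``seems unlikely\dots without significant new ideas''. There is therefore no proof in the paper to compare against, and your proposal must be read as an attempted resolution of an open problem.

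Your cases 1 and 2 are fine and uniform in $v$: when $-v$ is not special one has $\varphi(-v,\Vtyp)\ge\psi(-v,\Vtyp)\ge 1-\eps$, and $B(\orig,\tfrac12-\eps)\subseteq\Vtyp$ gives $\varphi(v,\Vtyp)\ge\tfrac12-\eps$ always, so $w(v,\Vtyp)\ge\tfrac32-2\eps$. The entire weight falls on the doubly-special case, and there the lower bound in (ii) is a genuine gap, not bookkeeping. Your proposed fix is to regard the facet $\Vtyp\cap\partial H_z$ as a $(d-1)$-dimensional Poisson--Voronoi-type cell and re-run Section~3. But, first, the facet is not a Voronoi cell of the projected process: writing $p=z/2+q$ with $q\perp z$, the constraint from $z'\in\Zcal$ reads $\langle q,\pi(z')\rangle\le(\|z'\|^2-\langle z,z'\rangle)/2$, whose right-hand side is not $\|\pi(z')\|^2/2$. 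Second, and more seriously, even a genuine $(d-1)$-dimensional typical cell does \emph{not} satisfy $\psi(u,\cdot)\ge 1-\eps$ uniformly in $u$; the paper's own inradius result shows $\psi$ can drop to $\tfrac12+o_d(1)$ in some directions. So re-running Section~3 would at best yield that the facet reaches radius $\approx\tfrac12\sqrt{1-h^2}$ in every direction, not $\sqrt{1-h^2}$, and plugging that into your formula destroys the $\tfrac32$ conclusion. Since there are of order $2^d$ Poisson points in $B(\orig,2)$ and hence exponentially many witnesses, and for each the relevant event has probability bounded away from~$1$, no net or conditioning argument of the type you sketch forces (ii) uniformly.

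There is also a secondary issue: the optimisation in case~3 is asserted rather than carried out, and your claimed minimiser $\gamma_1=0,\ h_1=\tfrac12$ is in fact infeasible --- with these values the constraints $\gamma_2\ge\arcsin(1/(2h_2))$ and $\gamma_2<\arccos(h_2)$ would require $1/(4h_2^2)+h_2^2<1$, i.e.\ $(2h_2^2-1)^2<0$. So even granting (ii), the minimisation over the actual feasible region remains to be done. In short, your outline is a reasonable heuristic for why $c=3/2$ is plausible, but the uniform lower bound in (ii) is precisely the obstacle the authors allude to, and the conjecture remains open.
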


The lower bound on the width can probably be improved via a more technical variation on our argument, 
but it seems unlikely to us that it will give a sharp result without significant new ideas.
The argument giving the upper bound exhibits a direction $u \in S^{d-1}$ such that 
$w(u,\Vtyp)$ is small. The direction $u$ is chosen perpendicular to one of the facets of $\Vtyp$.
A priori we see no reason to believe a direction of this type should minimize $w(u,\Vtyp)$ over all 
possible directions.
Let us however point out that if $u$ is the direction that minimizes $w(u,\Vtyp)$ then each 
of the two supporting hyperplanes
perpendicular to $u$ must contain some face and the sum of the dimensions of these two faces
must be at least $d-1$. (Otherwise, a small perturbation of $u$ will yield a direction with even smaller 
width.) Put differently, the union of the two supporting hyperplanes perpendicular to $u$ 
contains at least $d+1$ vertices of $\Vtyp$.
We have not been able to turn this observation into an argument that gives better bounds than the 
ones provided by Proposition~\ref{prop:width}, but perhaps other teams will be able to succeed in doing that.

An important ingredient in our proofs was the observation that, with probability $1-o_d(1)$, all 
vertices of the typical cell have approximately the same norm (namely $1\pm o_d(1)$ under the scaling $\lambda = 1/\kappa_d$
used throughout the paper).
As can for instance be seen from Theorem 1.2 in~\cite{HoermannEtal}, the expected number of 
vertices of $\Vtyp$ is $\exp[ (d/2) \cdot \ln d \cdot (1+o_d(1)) ]$.
It seems natural the compare the behaviour of the typical cell to that of the convex hull of the same number 
of points taken i.i.d.~uniformly at random on $S^{d-1}$.
The latter set-up has been studied by Bonnet and O'Reilly~\cite{BonnetOReilly}. A relevant result in~\cite{BonnetOReilly} 
is Theorem 3.4, which tells us the convex hull of $n=\exp[ (d/2) \cdot \ln d \cdot (1+o_d(1)) ]$
i.i.d.~uniform points on $S^{d-1}$ has the property that, with probability $1-o_d(1)$, each of its facets has 
distance $1\pm o_d(1)$ to the origin.
This is rather different from the behaviour of $\Vtyp$. In the proof of Lemma 14 we have exhibited 
the existence (with probability $1-o_d(1)$) of some facet with distance $1/2\pm o_d(1)$ to the origin. 
A straightforward adaptation of the argument shows that for all $1/2\leq r\leq 1$ there exist 
(with probability $1-o_d(1)$) some facet of $\Vtyp$ whose distance to the origin is $r\pm o_d(1)$.
So, in some sense the typical Poisson-Voronoi cell is less ``symmetric'' or ``regular'' than the 
convex hull of $n$ i.i.d.~points on the unit sphere, if $n$ is taken comparable to the (expected) number of
vertices of $\Vtyp$.

Another natural direction for further research is to try and obtain more precise results on the behaviour of the 
faces of $\Vtyp$. E.g.~give more detailed quantitative bounds on the diameter of (almost all) $k$-faces.
In the same line of questioning, one may want to determine more precise estimates on the $f_{d-k}(\Vtyp)$ than 
Theorem~\ref{thm:diambigface} provides. As mentioned earlier, in a forthcoming article we plan to 
provide more detailed asymptotics.

As the reader may have noticed -- and if not can readily check -- all the error probabilities
$o_d(1)$ obtained in our proofs were in fact exponentially small in $d$ or much smaller in many cases, with 
the exception of the lower bound in Proposition~\ref{prop:width}.
We've always considered the probability that the considered quantity (inradius, outradius, etc.) differs
by a constant $\eps>0$ from the target value. A natural set of follow-up questions is to see 
how fast we can let $\eps$ tend to zero as $d\to\infty$. 
In the same vein, it would be natural to find or bound the variances or perhaps even find a normalization
that yields non-trivial limiting distributions.

\subsection*{Acknowledgements}

We thank Gilles Bonnet, Pierre Calka and Anna Gusakova for helpful discussions and pointers to the literature.
We thank Ali Khezeli for making us aware of the existence of~\cite{AlishahiThesis}.

\bibliographystyle{plain}
\bibliography{IrlbeckKabluchkoMuller}

\appendix

\section{The Hausdorff distance between $\Vtyp$ and any ball is large\label{sec:Hausdorff}}

Here we substantiate the claim from the introduction that, with probability $1-o_d(1)$, the Hausdorff distance 
between the typical cell $\Vtyp$ and any ball is at least 
a constant times the diameter of $\Vtyp$.
For completeness let us first recall that the {\em Hausdorff distance} between sets $X, Y \subseteq \eR^d$ is 
given by:

$$ d_H(X,Y) := \max\left( \sup_{x\in X} \inf_{y\in Y} \norm{x-y}, \sup_{y \in Y} \inf_{x\in X} \norm{x-y} \right). $$

\noindent
We note that the Hausdorff distance satisfies

$$ 2\cdot d_H(X,Y) \geq |\diam(X)-\diam(Y)|, |\width(X)-\width(Y)|. $$

\noindent
If $Y$ is a ball then $\diam(Y)=\width(Y)$.
So for $X \subseteq \eR^d$ arbitrary 
and $Y \subseteq \eR^d$ a ball we must have 

$$ \diam(X)-2\cdot d_H(X,Y)\leq \diam(Y)=\width(Y) \leq w(X) + 2\cdot d_H(X,Y), $$

\noindent
giving  

$$d_H(X,Y) \geq (\diam(X)-w(X))/4.$$

Hence, by Theorem~\ref{thm:main} and Proposition~\ref{prop:width}, with probability $1-o_d(1)$, the typical 
cell $X=\Vtyp$ satisfies:

$$ d_H(\Vtyp, Y) \geq (1/8-o_d(1))\cdot \sqrt[d]{\lambda \cdot \vol(B)} 
= (1/16-o_d(1))\cdot\diam(\Vtyp), $$

\noindent 
for every ball $Y$.

\end{document}